\documentclass[a4paper]{amsart}
\usepackage{amsmath,amsthm,amssymb,latexsym,epic,bbm,comment,color}
\usepackage{graphicx,enumerate,stmaryrd}
\usepackage[all,2cell]{xy}
\xyoption{2cell}

\usepackage{tikz-cd}

\newtheorem{theorem}{Theorem}
\newtheorem{lemma}[theorem]{Lemma}
\newtheorem{definition}[theorem]{Definition}
\newtheorem{corollary}[theorem]{Corollary}
\newtheorem{proposition}[theorem]{Proposition}

\newtheorem{example}[theorem]{Example}

\usepackage[all]{xy}
\usepackage[active]{srcltx}
\usepackage[parfill]{parskip}

\begin{document}
\title[Harish-Chandra modules over invariant subalgebras]
{Harish-Chandra modules over invariant subalgebras in a skew-group ring}
\author{Volodymyr Mazorchuk and Elizaveta Vishnyakova}

\begin{abstract}
We construct a new class of algebras resembling enveloping algebras and generalizing 
orthogonal Gelfand-Zeitlin algebras and rational Galois algebras studied by \cite{EMV,FuZ,RZ,Har}. 
The algebras are defined via a geometric realization in terms of sheaves of functions invariant
under an action of a finite group. A natural class of modules over these algebra can be 
constructed via a similar geometric realization. In the special case of a local reflection group, 
these modules are shown to have an explicit basis, generalizing similar results for orthogonal 
Gelfand-Zeitlin algebras from \cite{EMV} and for rational Galois algebras from \cite{FuZ}. 
We also construct a family of canonical simple Harish-Chandra modules and give sufficient conditions for simplicity of some modules. 
\end{abstract}

\maketitle

\section{Introduction}

In the last decade there was a significant progress in understanding infinite dimensional 
simple modules over the  Lie algebra $\mathfrak{gl}_n$, see e.g. \cite{FGR,Ni1,Ni2,EMV} and references therein.
An essential part of this progress is related to the study of so-called {\em Gelfand-Zeitlin modules}
which originate from \cite{DOF} based on \cite{GZ} (see \cite{EMV} for a detailed literature overview on
Gelfand-Zeitlin modules). Various approaches to the study of Gelfand-Zeitlin modules rely on different
realizations of the universal enveloping algebras which led to a number of generalizations of 
such algebras. These include {\em orthogonal Gelfand-Zeitlin algebras}
introduced in \cite{Ma} and {\em Galois algebras} introduced in \cite{FO}. These generalizations
include also finite W-algebras of type $A$, see \cite{Arak,Har}, and were studied in, in particular,  
\cite{EMV,Har,FuZ,RZ}.  The recent preprint \cite{KTWWY} establishes a
relation between orthogonal Gelfand-Zeitlin algebras and Khovanov-Lauda-Rouquier algebras from \cite{KL,Ro} 
and, in particular, leads to a (not very explicit) classification of simple 
Gelfand-Zeitlin modules over orthogonal Gelfand-Zeitlin algebras.

In the present paper we define and study a simultaneous geometric generalization of 
orthogonal Gelfand-Zeitlin algebras and Galois algebras. Both our construction and methods of study are 
inspired by the geometric approach of \cite{Vi,Vi2} to singular Gelfand-Zeitlin modules
and is formulated in elementary sheaf-theoretic terms. To any semidirect product $G \ltimes V$ of 
a finite group $G$ and a complex-analytic or linear algebraic group $V$, we associate the corresponding 
skew-group ring $\mathcal S$. We denote by $\mathcal O$ the sheaf of holomorphic or polynomial 
functions on $V$. There is a natural action of $G$ on $\mathcal O$ and it is natural to 
consider the sheaf $\mathcal O^G$ of $G$-invariants in $\mathcal O$. Main protagonists of the 
present paper are subalgebras  in $\mathcal S$ that preserve the sheaf $\mathcal O^G$.
Orthogonal Gelfand-Zeitlin algebras, Galois algebras, finite W-algebras and Galois orders,
studied in \cite{EMV,FuZ,RZ,Har}, are all special cases of our construction.
In a special case which we call {\em standard algebras of type $A$}, we give an explicit 
description of our algebras as subalgebras in the {\em universal ring} as introduced in \cite{Vi2}.
Our geometric approach also naturally provides a construction of a large family of (simple) modules
over our algebras, generalizing \cite{Vi,Vi2,EMV}. We note that, the general case of our 
construction seems to be outside the scope of {\em Harish-Chandra subalgebras and 
Gelfand-Zeitlin modules} as defined in \cite{DFO}. However, it still fits into the general
Harish-Chandra setup which studies modules over some algebra on which a certain subalgebra 
acts locally finite. In particular, our results significantly generalize and simplify 
many results from \cite{FuZ}.

The paper is organized as follows: Section~\ref{s2} contains a description of our
setup and  preliminaries. Section~\ref{s3} defines and provides basic structure results for 
our algebras. Sections~\ref{s4}, \ref{s5} and \ref{s7} study in detail the spacial case of rational Galois orders.
Sections~\ref{s6} describes applications of our approach to the study of Gelfand-Zeitlin modules.
Finally, in Sections~\ref{s8} we construct canonical simple Harish-Chandra modules over our algebras and
give a sufficient condition for simplicity of these modules.
\bigskip

\textbf{Acknowledgements:} V.~M. is partially supported by
the Swedish Research Council and G{\"o}ran Gustafsson Stiftelse.
E.~V. is partially  supported by SFB TR 191, Tomsk State University, 
Competitiveness Improvement Program and by Programa Institucional de
Aux\'{\i}lio \`{a} Pesquisa de Docentes Rec\'{e}m-Contratados ou Rec\'{e}m-Doutorados,
UFMG 2018. We thank D.~Timashev and S.~Nemirovski for helpful comments and suggestions.
We thank B.~Webster for useful discussions and information about \cite{KTWWY}.

\section{Preliminaries}\label{s2}

\subsection{Skew-group ring} \label{s2.1}
Throughout the paper we work over the field $\mathbb C$ of complex numbers. 
Let $G$ and $V$ be two complex-analytic or linear algebraic Lie groups such that 
$G$ acts on $V$. Let  $G \ltimes V$ be the corresponding semidirect product. 
To simplify notations we will write 
$G$ and $V$ for subgroups $G\times \{e\}$ and $\{e\}\times V$ in  $G \ltimes V$, 
respectively.  On $V$ we have a free transitive action of $V$ by left translations 
$\phi_{\xi}$, where $\xi\in V$, and an action of $G$ given by $v \mapsto g\cdot v = g v g^{-1}$. 
Both actions are assumed to be holomorphic or algebraic. Note that $e\in V$ is a 
fixed point of the action of $G$. We denote by  $\gimel$ a fixed subgroup in $V$.

Denote by $\mathbb C \gimel$ the group algebra of $\gimel$. 
Consider the vector space of global meromorphic (or rational) sections of 
the trivial vector bundle $V\times \mathbb C \gimel \to V$.  We will denote this vector space by 
$\mathcal S(\gimel)$ or simply by $\mathcal S$, if $\gimel$ is clear from the context. 
We assume that any section of $\mathcal S(\gimel)$ has the form 
$f=\sum\limits_{i=1}^s f_i \phi_{\xi_i}$, where $f_i$ are meromorphic (or rational) 
functions on $V$, $\xi_i\in \gimel$ and $s< \infty$.  The vector space $\mathcal S(\gimel)$ 
has the natural structure  of a skew-group ring defined in the following way:
\begin{align*}
\sum_{i} f_i \phi_{\xi_i} \circ \sum_{j} f'_j \phi_{\xi'_j} = \sum_{i,j} f_i \phi_{\xi_i}(f'_j) \phi_{\xi_i\circ \xi'_j}.
\end{align*}
Here, by definition, 
$\phi_{\xi_i}(f'_j)(x):= f'_j(\xi^{-1}_i(x))$ for any $x\in V$.
Clearly, for any subgroup $\gimel'\in\gimel$ the ring  $\mathcal S(\gimel')$ can be viewed as
a subring in $\mathcal S(\gimel)$ in the obvious way.
 
The action of $G$ on $V$ induces an action of $G$ on $\mathcal S(V)$ and also an
action of $G$ on $S(\gimel)$ provided that $\gimel$ is $G$-invariant. 
More precisely, $g\cdot f \phi_{\xi}= (g\cdot f) \phi_{g\xi g^{-1}}$ and 
$g\cdot f$ is a function on $V$ defined as follows $g\cdot f(v) = f (g^{-1}\cdot v)$ for $v\in V$.
Let $\gimel$ be $G$-invariant.  Then we have the subring $\mathcal S(\gimel)^{G}$ of 
$G$-invariant sections of $\mathcal S(\gimel)$. Denote by $\mathcal M$ and by $\mathcal O$ 
the sheaves of meromorphic (or rational) and holomorphic (or polynomial) functions on $V$, 
respectively. For any $v\in V$, we denote by $\mathcal M_v$ and  $\mathcal O_v$ 
the corresponding algebras of germs of meromorphic and holomorphic functions at $v$. We put 
$$
\mathfrak M := \bigoplus_{x\in V} \mathcal M_x, \quad \mathfrak O := \bigoplus_{x\in V} \mathcal O_x.
$$
If $W\subset V$ is a subset, we set $\mathfrak M|_{W} := \bigoplus\limits_{x\in W} \mathcal M_x$ and $\mathfrak O|_{W} := \bigoplus\limits_{x\in W} \mathcal O_x$.

The ring $\mathcal S(V)$ acts on the vector space $\mathfrak M$ in the following way:
$$
f \phi_{\xi}: \mathcal M_v \to \mathcal M_{\xi(v)}, \quad F_v \mapsto 
(f\phi_{\xi} (F_v))_{\xi(v)}.
$$
Consequently, the ring $\mathcal S(\gimel)$ acts on the vector space $\mathfrak M|_{\gimel \cdot v}$, 
where $v\in V$ and $\gimel \cdot v$ is the $\gimel$-orbit of $v$. 
Note that, in general, we do not have any action of $\mathcal S$ on $\mathfrak O$, 
since sections of $\mathcal S$ are assumed to be meromorphic (resp. rational) and not 
holomorphic (resp. polynomial).

In case we need to distinguish complex-analytic and algebraic categories, 
we will use the subscripts $\mathsf{C} $ and $\mathsf A$, respectively. 
For example, we will write $\mathcal S_{\mathsf{C}}$ and 
$\mathcal O_{\mathsf{C}}$ to specify that we are working with meromorphic 
sections of our skew-ring and with holomorphic functions on $V$.

\subsection{Example: the classical Gelfand-Zeitlin operators}\label{sec Gelfand-Zeitlin operators}\label{s2.2}

For $n\geq 2$, denote by $V$ the vector space 
$$
V= \mathbb C^{n(n+1)/2}=\{ (v_{ki})\,\, |\,\,  1\leq i \leq k \leq n  \}.
$$ 
An element of $V$ is called a {\it Gelfand-Zeitlin tableaux}.
Let $\gimel \simeq \mathbb Z^{n(n-1)/2}$ be the subgroup of $V$ 
generated by Kronecker vectors $\delta^{st}=(\delta^{st}_{ki})$, where $k$ and $i$ are as above,
$1\leq t \leq s \leq n-1$ and $\delta_{ki}^{st}=1$, 
if $k=s$ and $i=t$, and $\delta_{ki}^{st}=0$ otherwise.  
The product $G=S_1\times S_2\times \cdots \times S_n$ of symmetric groups acts 
on $V$ in the following way: the element $s=(s_1,\ldots, s_n)\in G$ acts on 
$v=(v_{ki})$ via $(s (v))_{ki} = v_{k s_k(i)}$. 
For $a\in \mathbb C$, set $\xi^a_k=a\delta^{k1}$.
Consider the following elements in $\mathcal S(\gimel)^G$:
\begin{gather*}
E_{k,k+1} = \sum_{g\in G}g \cdot \frac{\prod\limits_{j= 1}^{k+1} (v_{k1}- v_{k+1,j})}{\prod\limits_{j= 2}^k (v_{k1}- v_{kj})} \phi_{\xi^1_k};\quad
E_{k+1,k}   = \sum_{g\in G}g\cdot \frac{\prod\limits_{j= 1}^{k-1} (v_{k1}- v_{k-1,j})}{\prod\limits_{j = 2}^k (v_{k1}- v_{kj})} \phi_{\xi^1_k}^{-1};\\
E_{kk}   = \sum_{i=1}^k (v_{ki} +i-1) - \sum_{i=1}^{k-1} (v_{k-1,i} +i-1).
\end{gather*} 
The subalgebra $U\subset \mathcal S(\gimel)^G$ generated by $E_{st}$ is 
isomorphic to universal enveloping algebra $\mathcal U(\mathfrak{gl}_n(\mathbb C))$, 
see e.g. \cite{DFO,Ma} for details.

\subsection{Orthogonal Gelfand-Zeitlin algebras}\label{sect def of OGZA}\label{s2.3}

Orthogonal Gelfand-Zeitlin algebras are generalizations of 
$\mathcal U(\mathfrak{gl}_n(\mathbb C))$ introduced in \cite{Ma}. 
Fix a positive integer $n\geq 2$ and let $n_k$, where $k=1,\ldots, n$, be positive integers. 
Denote by $V$ the following  vector space 
$$
V= \mathbb C^{\sum_k n_k}=\{ v=(v_{ki})\,\, |\,\,  1\leq i \leq n_k, \,\, 1\leq k \leq  n   \}.
$$
Let $\gimel \simeq \mathbb Z^{\sum_k n_k}$ be the subgroup of $V$ generated by 
$\delta^{st}=(\delta^{st}_{ki})$, where $1\leq t \leq n_k$, $1\leq s \leq  n $,
as in Subsection~\ref{s2.2}. The group $G=S_{n_1}\times S_{n_2}\times \cdots \times S_{n_n}$
acts on $V$ as in Subsection~\ref{s2.2} which defines the
ring $\mathcal S$ and its subring $\mathcal S^G$. 

With $\xi^1_k$ defined as in Subsection~\ref{s2.2}, an {\it orthogonal Gelfand-Zeitlin algebra} 
is a subalgebra in $\mathcal S^G$ generated by all $G$-invariant polynomials on $V$ and by the 
elements
\begin{align*}
E_{k} = \sum_{g\in G}g \cdot \frac{\prod\limits_{j= 1}^{n_{k+1}} (v_{k1}- v_{k+1,j})}
{\prod\limits_{j= 2}^{n_k} (v_{k1}- v_{kj})} \phi_{\xi^1_k};\quad
F_{k}   = \sum_{g\in G}g \cdot\frac{\prod\limits_{j= 1}^{n_{k-1}} (v_{k1}- v_{k-1,j})}
{\prod\limits_{j= 2}^{n_k} (v_{k1}- v_{kj})} \phi_{\xi^1_k}^{-1}.
\end{align*}
The algebra $\mathcal U(\mathfrak{gl}_n(\mathbb C))$ from Subsection~\ref{s2.2} is just a special
case of this construction, for $n_k=k$. 

Note that the generators $E_k$ and $F_k$ of the orthogonal Gelfand-Zeitlin algebra are rational
(and not polynomial), however, 
it was shown in \cite[Proposition~1]{EMV} that the operators $E_k$ and $F_k$ preserve the vector space $H^0(V,\mathcal O^G)$.
By \cite{KTWWY}, orthogonal Gelfand-Zeitlin algebras are related to shifted Yangians and
generalized $W$-algebras in type $A$.

\subsection{Standard algebras of type $\mathbb A$}\label{sec Standard algebras of type A} \label{s2.4}

Let $V$ and $G$ be as in Section \ref{sect def of OGZA}. 
An element $A$ in $\mathcal S^G$ is called {\it standard} if
$A=\sum\limits_{g\in G}g\cdot (f\phi_{\xi^a_k})$, where $a\in \mathbb C$.

\begin{definition}\label{def standard algebra of type A}
A subalgebra $\mathcal A\subset \mathcal S(V)^G$ is called 
{\it standard of type $\mathbb A$} if $\mathcal A$ is generated by linear 
combinations of standard elements.
\end{definition}

Orthogonal Gelfand-Zeitlin algebras are examples of standard algebras of type $\mathbb A$. 
Other examples of such algebras are: finite W-algebras  of type $A$ and, more general, 
standard Galois orders of type $A$, see \cite[Section~8]{FuZ} or \cite{Har} for definition. 
In Section \ref{sec Structure theorem for standard algebras} we will show that 
standard algebras of type $\mathbb A$ that preserve the vector space  $\mathfrak O^G$  
are exactly standard Galois orders of type $A$. 

\subsection{Harish-Chandra modules} \label{s2.5}

In this paper we will study modules which fit into the general philosophy of {\em Harish-Chandra modules}.
Let $\mathcal A\subset \mathcal S^G$ be a subalgebra containing, as a subalgebra, the algebra $\mathcal B$  
of all global $G$-invariant functions on $V$. 

\begin{definition}
We say that an {\it $\mathcal A$-module $M$ is a  Harish-Chandra module} 
provided that the action of $\mathcal B$ on $M$ is locally finite.
\end{definition}

Gelfand-Zeitlin modules for orthogonal Gelfand-Zeitlin algebras and Galois orders, studied in 
\cite{EMV,FuZ,Vi,Vi2} are examples of Harish-Chandra modules.

\section{Algebras preserving the vector space $\mathfrak O^G$ and their modules}\label{s3}

\subsection{A fibration corresponding to the sheaf of invariant 
functions}\label{s3.1}

Consider a semidirect product $G \ltimes V$, where $G$ is a finite group and 
$V$ is a complex-analytic or linear algebraic group. As above we denote by 
$\mathcal O$ the structure sheaf of the complex-analytic  (or algebraic) 
variety $V$. In other words, we assume that all sections of $\mathcal O$ 
are holomorphic or polynomial functions on $V$, respectively. We now define 
the sheaf $\mathcal O^{G}$ of $G$-invariant holomorphic (or polynomial) 
functions on $V/G$. For a $G$-invariant open set $U$ in $V$,  we let 
$\mathcal O^{G}(U/G)$ be the algebra of $G$-invariant holomorphic 
(or polynomial) functions on $U$. Below we will consider the algebra 
$\mathcal O^{G}_v$ of germs of  functions at a point $v\in V$. 
By definition, $\mathcal O^{G}_v$ is the algebra of germs 
$f\in \mathcal O_v$ such that there exists a $G$-invariant function 
$F\in \mathcal O_{G\cdot v}$ that has the germ $f$ at the point $v$. 
We put
$$
\mathfrak M^G := \bigoplus_{\bar x\in V/G} \mathcal M^G_{\bar x}, \quad \mathfrak O^G := \bigoplus_{\bar x\in V/G} \mathcal O^G_{\bar x}.
$$
If $W\subset V$ is a $G$-invariant subset, we set $\mathfrak M^G|_{W} := \bigoplus\limits_{\bar x\in W/G} \mathcal M_{\bar x}$ and $\mathfrak O^G|_{W} := \bigoplus\limits_{\bar x\in W/G} \mathcal O_{\bar x}$.

If $v$ is a fixed point of the action of $G$, then the algebra 
$\mathcal O^{G}_v$ is invariant with respect to the action of $G$. 
The group $G$ has at least one fixed point, namely, the identity $e\in V$.   
Consider the algebra $\mathcal O_e$ of germs of functions at the 
point $e$ and its $G$-invariant subalgebra 
$\mathcal O^{G}_e \subset \mathcal O_e$. Denote by $\mathcal J_e$ 
the ideal in $\mathcal O_e$ generated by functions from 
$\mathcal O^{G}_e$ that are equal to $0$ at $e$.  As above, we denote 
by $\phi_{\xi}: \mathcal O_{x} \to \mathcal O_{\xi x}$, 
$f \mapsto \phi_{\xi}(f)= f\circ \xi^{-1}$, the left translation by $\xi\in V$. 

\begin{lemma}\label{lem raznesinie} 
Let $G$ be a finite group, $\xi\in V$,  $G_{\xi}$ 
the stabilizer of $\xi$ and $\xi G \xi^{-1}\subset G \ltimes V$ 
the group obtained from $G$ by conjugation with $\xi$.  We have
\begin{align*}
\phi_{\xi}(\mathcal O^{G_{\xi}}_e) = \mathcal O^{G}_{\xi}\quad\text{ and }\quad 
\phi_{\xi} (\mathcal O^{G}_e) = \mathcal O^{\xi G \xi^{-1}}_{\xi}.
\end{align*} 
In particular, we have
$$
\phi_{\xi}(\mathcal O^{G_{\xi}}_e /(\mathcal O^{G_{\xi}}_e \cap \mathcal J_e)) 
= \mathcal O^{G}_{\xi}/\langle \mathcal O^{G}_{\xi} \cap (\mathcal 
O^{\xi G \xi^{-1}}_{\xi})^+\rangle,
$$
where the superscript $+$ means that we consider all functions from 
$\mathcal O^{\xi G \xi^{-1}}_{\xi}$ that are equal to $0$ at $\xi$ and 
$\langle \mathcal O^{G}_{\xi} \cap (\mathcal O^{\xi G \xi^{-1}}_{\xi})^+\rangle$ 
denotes the ideal in $\mathcal O^{G}_{\xi}$ generated by $\mathcal O^{G}_{\xi} \cap (\mathcal O^{\xi G \xi^{-1}}_{\xi})^+$.
\end{lemma}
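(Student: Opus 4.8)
The plan is to recognise that $\phi_\xi$ is simply the transformation of $V$ induced by the element $\xi\in V\subset G\ltimes V$ in the (geometric) action of $G\ltimes V$ on $V$ that combines the left translations of $V$ with the conjugation action of $G$. One checks directly that these two actions are compatible (for $g\in G$, $\zeta\in V$ one has $g\zeta=(g\cdot\zeta)g$), so that $G\ltimes V$ genuinely acts on $V$; the key consequence is that this action is a homomorphism, whence conjugation by $\xi$ intertwines the action of any subgroup $K\le G\ltimes V$ with the action of $\xi K\xi^{-1}$, i.e. the transformation induced by $\xi g\xi^{-1}$ equals $\phi_\xi\circ(g\,\cdot)\circ\phi_\xi^{-1}$. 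I would also record once and for all that for a finite $K\le G\ltimes V$ and a point $x$ with stabiliser $K_x$, the germ at $x$ of a $K$-invariant function is the same as a $K_x$-invariant germ at $x$: one direction is restriction, the other is transport across the finite orbit $K\cdot x$ using disjoint neighbourhoods. Thus $\mathcal O^K_x=(\mathcal O_x)^{K_x}$.

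With this in hand the second equality is the clean one, so I would prove it first. Since $e$ is a fixed point of $G$, we have $\mathcal O^G_e=(\mathcal O_e)^G$. Because $\phi_\xi$ conjugates the $G$-action into the $\xi G\xi^{-1}$-action and sends $e$ to $\xi$, the point $\xi$ is a fixed point of the \emph{whole} group $\xi G\xi^{-1}$ (this is exactly where one must use the geometric action rather than plain conjugation), so $\mathcal O^{\xi G\xi^{-1}}_\xi=(\mathcal O_\xi)^{\xi G\xi^{-1}}$. As the algebra isomorphism $\phi_\xi\colon\mathcal O_e\to\mathcal O_\xi$ carries $G$-invariant germs bijectively onto $\xi G\xi^{-1}$-invariant germs, we obtain $\phi_\xi(\mathcal O^G_e)=\mathcal O^{\xi G\xi^{-1}}_\xi$.

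For the first equality I would use that for $g\in G_\xi$ the elements $g$ and $\xi$ commute in $G\ltimes V$, since $g\cdot\xi=\xi$ gives $g\xi=(g\cdot\xi)g=\xi g$. Hence conjugation by $\xi$ fixes $G_\xi$, and $\phi_\xi$ commutes with the $G_\xi$-action; as both $e$ and $\xi$ are fixed by $G_\xi$, the map $\phi_\xi$ sends $(\mathcal O_e)^{G_\xi}=\mathcal O^{G_\xi}_e$ onto $(\mathcal O_\xi)^{G_\xi}$. It then remains to identify $(\mathcal O_\xi)^{G_\xi}$ with $\mathcal O^G_\xi$, which is precisely the recorded fact $\mathcal O^G_\xi=(\mathcal O_\xi)^{G_\xi}$, $G_\xi$ being the stabiliser of $\xi$ in $G$.

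Finally I would deduce the "in particular" statement by transporting everything at $e$ to $\xi$ through the algebra isomorphism $\phi_\xi$. By the two equalities it sends $\mathcal O^{G_\xi}_e$ to $\mathcal O^G_\xi$ and $\mathcal J_e$ to the ideal $\mathcal J_\xi:=\langle(\mathcal O^{\xi G\xi^{-1}}_\xi)^+\rangle$ of $\mathcal O_\xi$ (vanishing at $e$ becomes vanishing at $\xi$), and, being bijective, it sends $\mathcal O^{G_\xi}_e\cap\mathcal J_e$ to $\mathcal O^G_\xi\cap\mathcal J_\xi$. So everything reduces to the algebraic identity $\mathcal O^G_\xi\cap\mathcal J_\xi=\langle\mathcal O^G_\xi\cap(\mathcal O^{\xi G\xi^{-1}}_\xi)^+\rangle$ inside $\mathcal O^G_\xi$. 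The inclusion $\supseteq$ is immediate; the inclusion $\subseteq$ is the one place where I expect real content, namely a contraction-of-ideals argument, which I would settle with the Reynolds operator $R=\tfrac1{|G_\xi|}\sum_{g\in G_\xi}g$. It is $\mathcal O^G_\xi$-linear and restricts to the identity on $\mathcal O^G_\xi=(\mathcal O_\xi)^{G_\xi}$, so applying it to $\sum_i a_i s_i$ with $a_i\in\mathcal O_\xi$ and $s_i\in(\mathcal O^{\xi G\xi^{-1}}_\xi)^+\subseteq\mathcal O^G_\xi$ produces $\sum_i R(a_i)s_i$, exhibiting an invariant element of $\mathcal J_\xi$ with coefficients in $\mathcal O^G_\xi$. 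The main conceptual obstacle is to fix the correct action of $\xi G\xi^{-1}$ on $V$ (the geometric one, including the translation part), which is what makes $\xi$ a global fixed point; the main technical obstacle is the Reynolds-operator contraction step.
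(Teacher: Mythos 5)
Your proof is correct and follows essentially the same route as the paper: the first equality is obtained exactly as there, via the $G_{\xi}$-equivariance of $\phi_{\xi}$ together with the identification $\mathcal O^{G}_{\xi}=\mathcal O^{G_{\xi}}_{\xi}$ (your transport-across-the-orbit argument is the paper's averaging argument $\sum_{g\in G}g(f)$ in slightly different clothing). The paper declares the second equality and the ``in particular'' part clear and leaves the details to the reader; your fixed-point observation for the geometric action of $\xi G\xi^{-1}$ on $V$ and the Reynolds-operator contraction $(\mathcal O_{\xi})^{G_{\xi}}\cap\mathcal J_{\xi}=\langle \mathcal O^{G}_{\xi}\cap(\mathcal O^{\xi G\xi^{-1}}_{\xi})^+\rangle$ (which, note, really is needed and uses finiteness of $G$) supply precisely those omitted details, so this is a faithful completion rather than a different proof.
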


\begin{proof}  
First of all, we note that $\mathcal O^{G}_{\xi} = \mathcal O^{G_{\xi}}_{\xi}$.
Indeed, if $f\in \mathcal O^{G}_{\xi}$, then, clearly, 
$f\in \mathcal O^{G_{\xi}}_{\xi}$.  Further, if $f\in \mathcal O^{G_{\xi}}_{\xi}$,
then the sum of germs $\sum\limits_{g\in G} g(f)$ is an element of
$\bigoplus\limits_{g\in G} \mathcal O^{G}_{g \cdot \xi}$. Therefore 
$f\in \mathcal O^{G}_{\xi}$. Furthermore, the sheaf isomorphism 
$\phi_{\xi}:  \mathcal O_e \to \mathcal O_{\xi}$ is $G_{\xi}$-equivariant. 
Therefore, $\phi_{\xi}(\mathcal O^{G_{\xi}}_e) = \mathcal O^{G_{\xi}}_{\xi}$. 
The second and the third statements are clear, details are left to the reader. 
\end{proof}

The above defines the vector space $\mathcal O_e/ \mathcal J_e$ and its 
subspaces $\mathcal O^{G_{\xi}}_e /(\mathcal O^{G_{\xi}}_e \cap \mathcal J_e)$,
for any $\xi\in V$. Consider the following correspondence:
\begin{align*}
V\ni \xi \longmapsto \mathbb E_{\xi}:=  \mathcal O^{G}_{\xi}/\langle \mathcal O^{G}_{\xi} \cap (\mathcal O^{\xi G \xi^{-1}}_{\xi})^+\rangle = \phi_{\xi} (\mathcal O^{G_{\xi}}_e /(\mathcal O^{G_{\xi}}_e \cap \mathcal J_e)).
\end{align*}
This correspondence defines a fibration  
$\mathbb E= (\mathbb E_{\xi})_{\xi \in V}$ of vector spaces  over $V$. 
The group $G$ acts naturally on the fibration $\mathbb E$. 
Indeed, if $f\in \mathcal O^{G}_{\xi}$, then
$g\cdot f \in \mathcal O^{G}_{g\cdot\xi}$, and if 
$f\in (\mathcal O^{\xi G \xi^{-1}}_{\xi})^+$, 
then $g \cdot f\in (\mathcal O^{g \cdot\xi G (g \cdot\xi)^{-1}}_{g \cdot\xi})^+$. 
Now we can define the fibration 
$\mathbb E^G= (\mathbb E^G_{\bar\xi})_{\bar\xi \in V/G}$ 
on $V/G$ in the following way: $\mathbb E^G_{\bar\xi}$ is the vector space of 
all $G$-invariant elements from 
$\bigoplus\limits_{\xi'\in \bar\xi} \mathbb {E}_{\xi'}$. We set
$$
\mathfrak E := \bigoplus_{x\in V}  \mathbb E_{x}, \quad  \mathfrak E|_{W'} := \bigoplus_{x\in W'}  \mathbb E_{x}, \quad \mathfrak E^G := \bigoplus_{\bar x\in V/G}  \mathbb E^G_{\bar x}, \quad \mathfrak E^G|_{W} := \bigoplus_{\bar x\in W/G}  \mathbb E^G_{\bar x}
$$
for a subset $W'\subset V$ and for a $G$-invariant subset $W\subset V$.

\subsection{Action of elements in $\mathcal S$ on $\mathfrak E$}\label{s3.2}
 
Let  $\gimel$ be a $G$-invariant subgroup in $V$ and $v\in V$. 
Consider the $G$-invariant subset  $(G \ltimes \gimel) \cdot v$. 
Then $\mathfrak E^G|_{(G \ltimes \gimel) \cdot v/G}$ is defined.  
Take an element $A$ in $\mathcal S$ preserving 
the vector space $\mathfrak O^{G}|_{(G \ltimes \gimel) \cdot v/G}$. 
Any such $A$ has the following form on 
$\mathfrak O^{G}|_{(G \ltimes \gimel) \cdot v/G}$:
\begin{equation}\label{eq operator A}
A|_{\mathfrak O^{G}|_{(G \ltimes \gimel) \cdot v/G}}= 
\sum_i \sum\limits_{h\in G} h \cdot (f_i \phi_{\xi_i}). 
\end{equation}
Note that $A$ may be meromorphic. Also, we do not assume that
$A(\mathfrak O)\subset \mathfrak O$. 

\begin{theorem}\label{theor action in the bundle} 
Assume that $G$ is a finite group and $A$ sends  
$\mathfrak O^{G}|_{(G \ltimes \gimel) \cdot v/G}$ to itself. 
Then the action of $A$ on
$\mathfrak O^{G}|_{(G \ltimes \gimel) \cdot v/G}$ 
induces an action of $A$ on $\mathfrak E^G|_{(G \ltimes \gimel) \cdot v/G}$.
\end{theorem}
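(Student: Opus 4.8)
The plan is to realize the desired action as the descent of $A$ through the canonical projection $\pi\colon\mathfrak O^{G}|_{(G\ltimes\gimel)\cdot v/G}\to\mathfrak E^{G}|_{(G\ltimes\gimel)\cdot v/G}$, which at the orbit of $\xi$ is the quotient map $\mathcal O^{G}_{\xi}\twoheadrightarrow\mathbb E_{\xi}=\mathcal O^{G}_{\xi}/K_{\xi}$ with $K_{\xi}:=\langle\mathcal O^{G}_{\xi}\cap(\mathcal O^{\xi G\xi^{-1}}_{\xi})^{+}\rangle$; in particular $\pi$ is surjective, being fibrewise a quotient. Hence it suffices to prove $A(\ker\pi)\subseteq\ker\pi$: once this holds, $\bar A[F]:=[AF]$ is well defined, and $\pi\circ A=\bar A\circ\pi$ is exactly the asserted induced action. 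Thus the whole theorem reduces to the inclusion $A(\ker\pi)\subseteq\ker\pi$, where the stalk of $\ker\pi$ at the orbit of $\xi$ is $K_{\xi}$.

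First I would record the structural input of Lemma~\ref{lem raznesinie}. Put $I_{\xi}:=\langle(\mathcal O^{\xi G\xi^{-1}}_{\xi})^{+}\rangle=\phi_{\xi}(\mathcal J_e)$. The lemma supplies two facts I will use repeatedly: the fibrewise identification $K_{\xi}=\mathcal O^{G}_{\xi}\cap I_{\xi}$ (so that $\ker\pi$ is cut out by the ideals $I_{\xi}$), and the double equivariance of the family $(I_{\xi})_{\xi}$ — it is translation equivariant, $\phi_{\xi_0}(I_{\xi})=I_{\xi_0\xi}$, since $I_\xi=\phi_\xi(\mathcal J_e)$ and $\phi_{\xi_0}\phi_{\xi}=\phi_{\xi_0\xi}$, and $G$-equivariant, $h\cdot I_{\xi}=I_{h\cdot\xi}$, since $h$ conjugates $\xi G\xi^{-1}$ to $(h\cdot\xi)G(h\cdot\xi)^{-1}$.

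Next I would track a kernel element through $A$ by transporting to the fixed point $e$. Writing $A=\sum_i\sum_{h\in G}(h\cdot f_i)\phi_{h\cdot\xi_i}$ and collecting the summands sending a source point $\xi'$ to a fixed target $\eta=(h\cdot\xi_i)\xi'$, the isomorphisms $\phi_{\xi'}\colon\mathcal O_e\to\mathcal O_{\xi'}$ turn each summand into multiplication by the meromorphic germ $c_i^{h}:=\phi_{\eta}^{-1}(h\cdot f_i)\in\mathcal M_e$ applied to $\hat F_{\xi'}:=\phi_{\xi'}^{-1}(F_{\xi'})$, because $\phi_{h\cdot\xi_i}\phi_{\xi'}=\phi_{\eta}$. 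For $F\in\ker\pi$ we have $F_{\xi'}\in K_{\xi'}\subseteq I_{\xi'}$, hence $\hat F_{\xi'}\in\mathcal J_e$, and therefore, with $\xi'=(h\cdot\xi_i)^{-1}\eta$,
\[
\phi_{\eta}^{-1}\big((AF)_{\eta}\big)=\sum_{i,h}c_i^{h}\,\hat F_{\xi'}\in\mathcal M_e ,
\]
an $\mathcal M_e$-linear combination of elements of $\mathcal J_e$. By the hypothesis that $A$ preserves $\mathfrak O^{G}|_{(G\ltimes\gimel)\cdot v/G}$ this germ is holomorphic and $G_{\eta}$-invariant, i.e. it lies in $\mathcal O^{G_{\eta}}_e$; what remains is to upgrade this to membership in $\mathcal O^{G_{\eta}}_e\cap\mathcal J_e$, which under $\phi_{\eta}$ is precisely $(AF)_{\eta}\in K_{\eta}$.

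The main obstacle is exactly this last upgrade at the points $\eta$ with nontrivial stabilizer, where the multipliers $f_i$ have genuine poles (in the orthogonal Gelfand-Zeitlin example, along the loci $v_{k1}=v_{kj}$). Away from such points the denominators are units and the claim is immediate, but at singular $\eta$ one has $\mathcal J_e\cdot\mathcal M_e=\mathcal M_e$, so the containment $\mathcal O^{G_{\eta}}_e\cap(\mathcal J_e\cdot\mathcal M_e)\subseteq\mathcal J_e$ fails for a general meromorphic combination; one therefore cannot merely clear denominators and divide, since the holomorphy of the sum is produced by cancellation of poles, and it is this cancellation that must be shown compatible with $\mathcal J_e$. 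I expect to resolve this by exploiting that $AF$ is holomorphic not only at $\eta$ but along the whole $G$-orbit, together with the $G$-stability of $\mathcal J_e$ and averaging over $G_{\eta}$ by the Reynolds operator: writing $\hat F_{\xi'}=\sum_l p_l\,a_l$ with invariant $a_l\in(\mathcal O^{G}_e)^{+}$ and using the local invariant-theoretic structure of $\mathcal O_e/\mathcal J_e$ and the finiteness of $G$, one argues that the invariant generators $a_l$ absorb the poles so that the holomorphic total stays in the ideal. This singular-point analysis, which generalizes \cite[Proposition~1]{EMV} and the geometric constructions of \cite{Vi,Vi2}, is where essentially all the difficulty lies.
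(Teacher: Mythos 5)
Your reduction to $A(\ker\pi)\subseteq\ker\pi$, the transport to the fixed point $e$, and your diagnosis of where the difficulty sits are all correct --- the paper's proof confronts exactly the point you isolate. But your proposed resolution is a genuine gap: ``the invariant generators $a_l$ absorb the poles'' is not an argument, and its natural reading is false. Even with $G_{\eta}$-invariance, holomorphy of a meromorphic multiple of an element of $\mathcal J_e$ does not place it in $\mathcal J_e$: for $G=\mathbb Z/2$ acting on $\mathbb C$ by $x\mapsto -x$ one has $\mathcal J_e=(x^2)$, and $x^2\cdot x^{-2}=1$ is holomorphic and invariant, yet not in $\mathcal J_e$. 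So no amount of Reynolds averaging of a single fixed kernel element, nor holomorphy of that one image along the whole $G$-orbit, can close the argument; the constraint you must exploit is on the operator $A$ across \emph{varying} inputs, not on the one germ you are tracking.

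The paper's mechanism, absent from your sketch, is a two-step linearization. First, the kernel is spanned by elements carrying one \emph{common} $G$-invariant factor: $F=\sum_{g\in G}(F'\circ g\cdot\xi^{-1})\,[g\cdot(X\circ\xi^{-1})]$ with $F'\in\mathcal O^{G}_e$ and $X\in\mathcal O^{G_\xi}_e$ (your Reynolds decomposition $\hat F_{\xi'}=\sum_l p_l a_l$ at a single point loses this coherence across the orbit). Because $F'$ is $G$-invariant and $(h\cdot\xi_i)\circ(g\cdot\xi)=\eta$ on $\Lambda$, the factor $F'\circ\eta^{-1}$ is literally identical in every summand of $A(F)_{\eta}$ and pulls out of the sum: $A(F)_{\eta}=(F'\circ\eta^{-1})\,H$, where $H=\sum_{(g,h,i)\in\Lambda}(h\cdot f_i)\,g\cdot(X)\circ\eta^{-1}$ is independent of $F'$. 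Second --- the decisive move --- this identity holds for \emph{every} $F'\in\mathcal O^{G}_e$, so specializing $F'$ to a nonzero constant (still a legitimate input from $\mathfrak O^{G}$) and invoking the hypothesis again forces $H$ itself to be holomorphic and to lie in $\mathcal O^{G}_{\eta}$. Then for $F'\in\mathcal J_e$ one has $F'\circ\eta^{-1}\in(\mathcal O^{\eta G \eta^{-1}}_{\eta})^{+}$, so $A(F)_{\eta}=(F'\circ\eta^{-1})H$ lands in the kernel ideal at $\eta$ by Lemma~\ref{lem raznesinie}. It is this quantification over inputs --- varying the invariant factor while the meromorphic cofactor stays fixed --- that renders the pole cancellation compatible with $\mathcal J_e$; nothing in your proposal plays this role, and without it the final ``upgrade'' step, which is the entire content of the theorem, remains unproven.
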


\begin{proof} 
The element $A\in \mathcal S$ acts on 
$\mathfrak O^{G}|_{(G \ltimes \gimel) \cdot v/G}$.  We need to show that this action 
induces an  action on $\mathfrak E^G|_{(G \ltimes \gimel) \cdot v/G}$, 
or, equivalently, that it induces an  action on the vector space 
$$
\bigoplus\limits_{\bar \xi\in (G \ltimes \gimel) \cdot v/G} 
\big[\bigoplus\limits_{\xi'\in \bar\xi} \mathcal O^{G}_{ \xi'} /  \phi_{\xi'} 
(\mathcal O^{G_{\xi'}}_e \cap \mathcal J_e)\big]^G.
$$ 
In other words, we need to show that $A(F)$, where 
$
F\in [\bigoplus\limits_{g\in G} \phi_{g \cdot \xi} 
(\mathcal O^{G_{g \cdot \xi}}_e \cap \mathcal J_e)]^G,
$ 
is a sum of elements from $[\bigoplus\limits_{g\in G} \phi_{g \cdot \xi'} 
(\mathcal O^{G_{g \cdot \xi'}}_e \cap \mathcal J_e)]^G$ for various $\xi'$.
Let us take $F$ such that there exists $F'\in \mathcal O^{G}_{e}$ and $X \in \mathcal O^{G_{g \cdot \xi}}_e$ with 
$$
F= \sum\limits_{g\in G} (F'\circ g \cdot \xi^{-1}) [g \cdot (X \circ \xi^{-1})].
$$ 
Note that $F'$ is 
either in the ideal $\mathcal J_e$ or is an invertible $G$-invariant 
element. We have
\begin{align*}
A(F) =   \sum_i\sum_{h,g\in G} (h\cdot f_i) F' \circ (g \cdot \xi^{-1} \circ h \cdot \xi_i^{-1}) [g \cdot (X) \circ g \cdot \xi^{-1} \circ h \cdot \xi_i^{-1}] .
\end{align*}
This is a sum of $G$-invariant germs supported at the points 
$h\cdot \xi_i \circ g \cdot \xi$. Consider, for example, the germ of 
$A(F)$ at the point $\eta:=h_0\cdot \xi_{i_0} \circ \xi$:
\begin{equation}\label{eq A(F)_eta}
\begin{split}
A(F)_{\eta} = \sum_{(g,h,i)\in\Lambda} (h\cdot f_i) 
F' \circ (g \cdot \xi^{-1} \circ h \cdot \xi_i^{-1}) [g \cdot (X) \circ \eta^{-1}] = \\
F' \circ \eta^{-1} \sum_{(g,h,i)\in\Lambda} (h\cdot f_i) [g \cdot (X) \circ \eta^{-1}],
\end{split}
\end{equation}
where $\Lambda = \{(g,h,i)\,|\,\, (h\cdot \xi_i) \circ (g \cdot \xi) = \eta\}$. 
We see that the product of a meromorphic function 
$$
H:=\sum\limits_{(g,h,i)\in\Lambda} (h\cdot f_i) g \cdot (X) \circ \eta^{-1}
$$ 
and a holomorphic function 
$F' \circ \eta^{-1}$ is holomorphic, since $A(F)_{\eta}$ is holomorphic. 
This holds for any $F'\in  \mathcal O^{G}_{e}$, in particular, for constant $F'$. The latter implies that $H$ is holomorphic at $\eta$. Similarly, we conclude 
that $H$ is in $\mathcal O^{G}_{\eta}$.  Summing up, we have  
$F' \circ \eta^{-1} \in \mathcal O^{\eta G \eta^{-1}}_{\eta} $ and 
$H \in \mathcal O^{G}_{\eta}$. Note that, from $F'\in \mathcal J_e$, 
it follows that $F' \circ \eta^{-1} \in (\mathcal O^{\eta G \eta^{-1}}_{\eta})^+$.
Now the assertion of the theorem follows from Lemma~\ref{lem raznesinie}. 
\end{proof}

\subsection{$\mathcal A$-modules corresponding to $\mathfrak E$}\label{s3.3}

For convenience we put
\begin{equation}\label{eq vector space sum E_xi_i}
M(G,(G \ltimes \gimel) \cdot v) = \mathfrak E|_{\bar\xi \in (G \ltimes \gimel) \cdot v/G}.
\end{equation}
Denote by $M^*(G,(G \ltimes \gimel) \cdot v)$ the vector space 
\begin{equation}\label{eq vector space sum E^*_xi_i}
M^*(G,(G \ltimes \gimel) \cdot v): = 
\bigoplus_{\bar\xi \in (G \ltimes \gimel) \cdot v/G} (\mathbb E^G_{\bar\xi})^*.
\end{equation}
Note that, in general, 
$M^*(G,(G \ltimes \gimel) \cdot v)\subsetneq 
\big(\mathfrak E|_{\bar\xi \in (G \ltimes \gimel) \cdot v/G}\big)^*$. We will need the following lemma.

\begin{lemma}\label{lem action on M^*}
Assume that $A$ sends the vector space $\mathfrak O^{G}|_{(G \ltimes \gimel) \cdot v/G}$ 
to itself. Then the action of $A$ on 
$\mathfrak O^{G}|_{(G \ltimes \gimel) \cdot v/G}$ induces 
an action of $A$ on $M^*(G,(G \ltimes \gimel) \cdot v)$.
\end{lemma}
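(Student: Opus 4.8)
The plan is to deduce Lemma~\ref{lem action on M^*} by dualizing the action constructed in Theorem~\ref{theor action in the bundle}. Theorem~\ref{theor action in the bundle} already establishes that, whenever $A$ preserves $\mathfrak O^{G}|_{(G \ltimes \gimel) \cdot v/G}$, the induced action descends to the fibration $\mathfrak E^G|_{(G \ltimes \gimel) \cdot v/G}$; that is, $A$ acts as a (meromorphic-coefficient) linear operator on $\mathfrak E^G|_{(G \ltimes \gimel) \cdot v/G} = \bigoplus_{\bar\xi} \mathbb E^G_{\bar\xi}$. Since $M^*(G,(G \ltimes \gimel) \cdot v) = \bigoplus_{\bar\xi} (\mathbb E^G_{\bar\xi})^*$ is by definition the direct sum of the duals of the individual fibers, the natural candidate for the action on $M^*$ is the transpose of the action on $\mathfrak E^G$. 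So the first step is to write $A$, on $\mathfrak E^G|_{(G \ltimes \gimel) \cdot v/G}$, as a matrix of linear maps $A_{\bar\xi',\bar\xi}\colon \mathbb E^G_{\bar\xi} \to \mathbb E^G_{\bar\xi'}$ indexed by pairs of orbits in $(G \ltimes \gimel) \cdot v/G$, and then define the dual action on $M^*$ by the transposed maps $(A_{\bar\xi',\bar\xi})^*\colon (\mathbb E^G_{\bar\xi'})^* \to (\mathbb E^G_{\bar\xi})^*$.

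The key technical point I would verify is that this transpose is a genuine action, i.e. that it lands inside $M^* = \bigoplus_{\bar\xi} (\mathbb E^G_{\bar\xi})^*$ rather than in the full dual $\big(\mathfrak E|_{(G \ltimes \gimel) \cdot v/G}\big)^*$ (the inclusion being proper, as the remark preceding the lemma emphasizes). Here the crucial structural feature is the \emph{local finiteness} coming from the form of $A$ in \eqref{eq operator A}: the expression $A = \sum_i \sum_{h\in G} h \cdot (f_i \phi_{\xi_i})$ is a \emph{finite} sum, and for a fixed source orbit $\bar\xi$ the element $\phi_{\xi_i}$ moves the germ at $\xi$ only to the finitely many points $h\cdot\xi_i \circ g\cdot\xi$. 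Consequently each column $A_{-,\bar\xi}$ has only finitely many nonzero entries, and so does each row $A_{\bar\xi',-}$. Column-finiteness is what makes $A$ well defined on the direct sum $\mathfrak E^G$; row-finiteness is exactly what guarantees that the transposed operator preserves the direct sum $M^*$ (a functional supported on finitely many fibers is sent to a functional again supported on finitely many fibers). I would spell this out by checking, from the analysis of the points $\eta = h_0\cdot\xi_{i_0}\circ\xi$ already carried out in the proof of Theorem~\ref{theor action in the bundle}, that the set $\Lambda$ is finite and that only finitely many target orbits $\bar\eta$ arise for each fixed $\bar\xi$, and symmetrically that each $\bar\eta$ receives contributions from only finitely many $\bar\xi$.

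The remaining step is routine verification that the assignment $A \mapsto A^*$ is compatible with the ring structure of $\mathcal S$, so that what we obtain is a module action and not merely a single operator. Since the action on $\mathfrak E^G$ induced by Theorem~\ref{theor action in the bundle} is multiplicative in $A$ by construction, and transposition of (row- and column-finite) matrices reverses composition, one must be slightly careful: the transpose gives an \emph{anti}-homomorphism at the level of matrices, but this is reconciled by the standard convention that makes $M^*$ a left module via $(A\cdot\varphi)(m) = \varphi(A\cdot m)$ read off in the appropriate order, matching the skew-group multiplication in $\mathcal S$ defined in Subsection~\ref{s2.1}. I would close by confirming that this convention indeed yields a left $\mathcal S$-action consistent with the composition $\circ$ in $\mathcal S$.

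The main obstacle is precisely the row-finiteness check: it is easy to see that $A$ acts on the direct sum $\mathfrak E^G$ (column-finiteness, which is immediate from \eqref{eq operator A}), but the fact that the transpose preserves the \emph{restricted} dual $M^*$ rather than spilling into the full dual requires genuinely using that $G$ is finite and that $\gimel$-translation is invertible, so that the relation $(h\cdot\xi_i)\circ(g\cdot\xi)=\eta$ can be solved for $\xi$ in only finitely many ways given $\eta$. Everything else is formal once this finiteness is in hand.
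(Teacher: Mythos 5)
Your proposal is correct and is essentially the paper's own argument: the paper also defines $[A(\alpha)]\big(\sum_{g\in G}g\cdot F\big)=\alpha\big(A(\sum_{g\in G}g\cdot F)\big)$ via the action from Theorem~\ref{theor action in the bundle} and then observes that the resulting germs are supported at the finitely many points $h\cdot\xi_i\circ g\cdot\xi$, so that $A(\alpha)$ lies in the finite sum $\bigoplus_{\bar\xi'\in\Lambda/G}(\mathbb E^G_{\bar\xi'})^*$ with $\Lambda=\{h\cdot\xi_i^{-1}\circ g\cdot\eta\}$ --- exactly your row-finiteness check. Your additional discussion of the homomorphism-versus-anti-homomorphism issue goes beyond what the lemma (which concerns a single element $A$) requires, and the paper does not address it either.
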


\begin{proof} 
Let us take $A= \sum\limits_i \sum\limits_{h\in G} h \cdot (f_i \phi_{\xi_i})$,
$\alpha\in (\mathbb E^G_{\bar\eta})^*$ and 
$\sum\limits_{g\in G}g\cdot F\in \mathcal O_{\bar \xi}$. We have
\begin{align*}
[A(\alpha)]\big(\sum\limits_{g\in G}g\cdot F\big) = 
\alpha \big( \sum\limits_i \sum\limits_{h,g\in G} h \cdot (f_i) 
(g\cdot F)\circ  h \cdot\xi_i^{-1} \big).
\end{align*}
Inside the brackets on the right hand side we have a sum of 
$G$-invariant germs supported at the points from the  finite set
$\{h \cdot\xi_i\circ g\cdot\xi\,\, |\,\, g,h\in G \}$. Therefore,
$[A(\alpha)]\big(\sum\limits_{g\in G}g\cdot F\big) = 0$, if 
$\bar\eta \notin \{h \cdot\xi_i\circ g\cdot\xi\,\, |\,\, g,h\in G \}/G$. 
In other words, 
$$
A(\alpha)\subset \bigoplus_{\bar\xi' \in \Lambda/G} (\mathbb E^G_{\bar\xi})^*, \quad \text{where}\quad \Lambda = \{h \cdot\xi^{-1}_i\circ g\cdot\eta\,\, |\,\, g,h\in G \}
$$
and the proof is complete.
\end{proof}

As a consequence of Theorem~\ref{theor action in the bundle} 
and Lemma~\ref{lem action on M^*}, we have the following statement. 

\begin{corollary}\label{cor M is a A-module}
Let $\mathcal A$ be a subalgebra in $\mathcal S(\gimel)$ that preserves 
the vector space $\mathfrak O^{G}|_{(G \ltimes \gimel) \cdot v}$. 
Then both $M(G,(G \ltimes \gimel) \cdot v)$ and $M^*(G,(G \ltimes \gimel) \cdot v)$
are $\mathcal A$-modules. 
\end{corollary}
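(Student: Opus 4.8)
The plan is to leverage the two preceding results, Theorem~\ref{theor action in the bundle} and Lemma~\ref{lem action on M^*}, and combine them with the module axioms. First I would recall exactly what the corollary demands. By definition~\eqref{eq vector space sum E_xi_i}, the space $M(G,(G \ltimes \gimel) \cdot v)$ is precisely $\mathfrak E^G|_{(G \ltimes \gimel) \cdot v/G}$ (up to the notational identification used in the excerpt), and $M^*(G,(G \ltimes \gimel) \cdot v)$ is its restricted dual as in~\eqref{eq vector space sum E^*_xi_i}. Since $\mathcal A$ preserves $\mathfrak O^{G}|_{(G \ltimes \gimel) \cdot v}$ by hypothesis, every element $A\in\mathcal A$ satisfies the hypotheses of both Theorem~\ref{theor action in the bundle} and Lemma~\ref{lem action on M^*}. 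Thus each $A$ already gives a well-defined linear endomorphism of $M(G,(G \ltimes \gimel) \cdot v)$ and of $M^*(G,(G \ltimes \gimel) \cdot v)$. The content that remains is to verify that these assignments are genuine algebra actions, i.e.\ that they are compatible with the multiplication and unit of $\mathcal A$.

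Next I would set up the assignment $\rho\colon\mathcal A\to \mathrm{End}\big(M(G,(G \ltimes \gimel) \cdot v)\big)$ and argue that it is an algebra homomorphism. The key observation is that the induced action on $\mathfrak E^G$ is defined via the action of $\mathcal A$ on $\mathfrak O^{G}|_{(G \ltimes \gimel) \cdot v/G}$: Theorem~\ref{theor action in the bundle} says that the honest $\mathcal S$-action descends through the quotients $\mathcal O^{G}_{\xi}/\langle \mathcal O^{G}_{\xi} \cap (\mathcal O^{\xi G \xi^{-1}}_{\xi})^+\rangle$ that assemble into the fibers $\mathbb E_\xi$. Since the original action of $\mathcal S$ on $\mathfrak O^{G}$ is an honest ring action (composition of operators $f\phi_\xi$ is governed by the skew-group multiplication defined in Subsection~\ref{s2.1}), and since passing to a quotient of a module is functorial, the induced maps automatically satisfy $\rho(A\circ B)=\rho(A)\circ\rho(B)$ and $\rho(\mathrm{id})=\mathrm{id}$. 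Concretely, I would note that for $A,B\in\mathcal A$ both $AB$ and $BA$ lie in $\mathcal A$ (as $\mathcal A$ is a subalgebra), so all three operators preserve $\mathfrak O^{G}|_{(G \ltimes \gimel) \cdot v/G}$ and hence descend; the relation $\rho(A)\rho(B)=\rho(AB)$ then holds because it already holds on $\mathfrak O^{G}$ before taking the quotient, and the quotient map intertwines the two actions by the very construction in the proof of Theorem~\ref{theor action in the bundle}.

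For the dual module $M^*(G,(G \ltimes \gimel) \cdot v)$ I would carry out the analogous check, but here one must be attentive to the direction of the action. The formula in the proof of Lemma~\ref{lem action on M^*} defines $[A(\alpha)](\,\cdot\,)=\alpha(A(\,\cdot\,))$, so as written this is the natural left action obtained by precomposition. I would verify that this indeed yields a left $\mathcal A$-module structure compatible with the one on $M$, checking that $(AB)(\alpha)=A(B(\alpha))$ follows from the corresponding identity on $M$ together with the associativity of the pairing; the restricted-dual caveat noted after~\eqref{eq vector space sum E^*_xi_i} causes no trouble because Lemma~\ref{lem action on M^*} explicitly shows $A(\alpha)$ lands back in the restricted dual, the support being controlled by the finite set $\{h \cdot\xi^{-1}_i\circ g\cdot\eta \mid g,h\in G\}$.

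The main obstacle I anticipate is purely bookkeeping rather than conceptual: one must make sure the ``induced action'' produced by Theorem~\ref{theor action in the bundle} is literally the descent of a single fixed ring action on $\mathfrak O^{G}$, so that multiplicativity is inherited for free rather than needing to be re-derived fiber by fiber. In particular, I would want to confirm that the quotient defining $\mathbb E_\xi$ is taken uniformly across the orbit and is respected simultaneously by all elements of $\mathcal A$, so that the family of induced maps genuinely assembles into a representation of the whole algebra and not merely a collection of compatible maps for each generator. Once this uniformity is pinned down, the corollary is an immediate formal consequence of the two cited results and the fact that $\mathcal A$ is closed under multiplication.
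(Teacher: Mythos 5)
Your proposal is correct and takes essentially the same route as the paper: the authors' entire proof of this corollary is the one sentence that it is a consequence of Theorem~\ref{theor action in the bundle} and Lemma~\ref{lem action on M^*}, and your additional verification that the induced endomorphisms assemble into an algebra action (multiplicativity inherited by descent from the honest $\mathcal S$-action on $\mathfrak O^{G}|_{(G \ltimes \gimel) \cdot v/G}$, with the quotient defining $\mathbb E_{\xi}$ taken uniformly over the orbit) is exactly the bookkeeping the paper leaves implicit. One pedantic caveat on your dual-module check: precomposition $[A(\alpha)] = \alpha \circ A$ a priori satisfies $(AB)(\alpha) = B(A(\alpha))$ rather than $A(B(\alpha))$, i.e.\ it is an action of the opposite algebra, but this convention issue is equally present (and silently passed over) in the paper itself, so it does not separate your argument from theirs.
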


In the next sections we will consider the case when $G$ acts locally as a 
reflection group. In this case all vector spaces $\mathbb E^G_{\bar\xi}$ are 
finite dimensional of dimension $|G|$ by Chevalley-Shephard-Todd Theorem. 

\subsection{Construction of new $\mathcal A$-modules }\label{s3.4}

Recall that $\gimel$ is a $G$-invariant subgroup in $V$. Let $\mathcal A$ 
be a subalgebra in $\mathcal S(\gimel)$ that preserves the vector space $\mathfrak O^{G}|_{(G \ltimes \gimel) \cdot v/G}$, where $v\in V$ is a 
fixed point.  Denote by $G_{\gimel \cdot v}$ the stabilizer in $G$ of the 
orbit $\gimel \cdot v$. Let 
$W:= (G \ltimes \gimel) \cdot v\setminus \gimel \cdot v$. In other words, 
$W\subset V$ is the union of all orbits of $\gimel$ in  
$(G \ltimes \gimel) \cdot v$ except for $\gimel \cdot v$. By definition, 
the group $G_{\gimel \cdot v}$ acts on $\gimel \cdot v$. Therefore,  
$G_{\gimel \cdot v}$ acts on $W$ too. 

Further, we have a natural projection 
$\pi_G: \mathfrak O^{G}|_{(G \ltimes \gimel) \cdot v/G} \to  
\mathfrak O^{G_{\gimel \cdot v}}|_{\gimel \cdot v/G_{\gimel \cdot v}}$ 
defined by the following formula:
\begin{equation}\label{eq def of pr}
\mathfrak O^{G}|_{(G \ltimes \gimel) \cdot v/G}\ni F= \sum_{g\in G_{\gimel \cdot v}} g\cdot f_{\xi} + \sum_{g\in L} g\cdot f_{\xi} \longmapsto \sum_{g\in G_{\gimel \cdot v}} g\cdot f_{\xi}\in \mathfrak O^{G_{\gimel \cdot v}}|_{\gimel \cdot v/G_{\gimel \cdot v}}, 
\end{equation}
where $\xi\in \gimel \cdot v$, $f_{\xi}\in \mathcal O^{G_{\xi}}_{\xi}$ is a $G_{\xi}$-invariant germ and 
$$
L:= G\setminus G_{\gimel \cdot v} =  
\{g\in G\,\,|\,\, g\cdot \xi \notin \gimel \cdot v \}.
$$ 
Note that, for any such $F$, there exists $f_{\xi}$ with $\xi\in \gimel \cdot v$
and the map \eqref{eq def of pr} is independent of the choice 
of $\xi\in \gimel \cdot v$.  

\begin{lemma}
The map $\pi_G$ is a bijection.
\end{lemma}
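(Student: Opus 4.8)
The plan is to show that the projection map $\pi_G$ of \eqref{eq def of pr} is a bijection by exhibiting an explicit two-sided inverse, built out of the group-symmetrization (averaging) operator that takes a $G_{\gimel\cdot v}$-invariant germ on $\gimel\cdot v$ and spreads it over the full $G$-orbit. First I would describe the candidate inverse. Given a $G_{\gimel\cdot v}$-invariant element $\sum_{g\in G_{\gimel\cdot v}} g\cdot f_\xi \in \mathfrak O^{G_{\gimel\cdot v}}|_{\gimel\cdot v/G_{\gimel\cdot v}}$ supported on the single $\gimel$-orbit $\gimel\cdot v$, I send it to the full symmetrization $\sum_{g\in G} g\cdot f_\xi$ regarded as an element of $\mathfrak O^{G}|_{(G\ltimes\gimel)\cdot v/G}$. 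The key structural fact, which I would state and verify first, is that a $G$-invariant section supported on the $G$-stable set $(G\ltimes\gimel)\cdot v$ is \emph{uniquely} determined by its components on any single $\gimel$-orbit within that set: because $G$ acts transitively on the set of $\gimel$-orbits comprising $(G\ltimes\gimel)\cdot v$, knowing $F$ on $\gimel\cdot v$ and imposing $G$-invariance forces all remaining components. This is exactly the content of the parenthetical remark following \eqref{eq def of pr}, and it is the observation that makes both directions work.

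Next I would carry out the two verifications. \textbf{Injectivity:} suppose $\pi_G(F)=0$, i.e.\ all components of $F$ on $\gimel\cdot v$ vanish. Since $F$ is $G$-invariant and $G$ permutes the $\gimel$-orbits in $(G\ltimes\gimel)\cdot v$ transitively, any component of $F$ at a point $\xi'\in W$ can be obtained by applying a suitable $g\in G$ to a component of $F$ at a point of $\gimel\cdot v$; as the latter are all zero, $F=0$. \textbf{Surjectivity:} given a target element, I apply the candidate inverse just described, producing a $G$-invariant $F$ whose components on $\gimel\cdot v$ are precisely $\sum_{g\in G_{\gimel\cdot v}} g\cdot f_\xi$; applying $\pi_G$ returns the original element because the terms indexed by $g\in L$ are supported off $\gimel\cdot v$ and are therefore killed by the projection. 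The two computations together, plus well-definedness of the inverse (independence of the choice of representative $\xi\in\gimel\cdot v$, which again follows from $G_{\gimel\cdot v}$-invariance), give the bijection.

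\textbf{The main obstacle} I anticipate is purely bookkeeping rather than conceptual: one must be careful that the decomposition $F=\sum_{g\in G_{\gimel\cdot v}} g\cdot f_\xi + \sum_{g\in L} g\cdot f_\xi$ is genuinely consistent, i.e.\ that when two group elements $g_1,g_2$ satisfy $g_1\cdot\xi=g_2\cdot\xi$ (so $g_1^{-1}g_2\in G_\xi$), the corresponding germs agree, so that $F$ is well-defined as a function on the quotient $(G\ltimes\gimel)\cdot v/G$ and not merely on $G$. This is where one invokes that $f_\xi\in\mathcal O^{G_\xi}_\xi$ is $G_\xi$-invariant. I would organize the argument so that this coset-consistency is checked once, at the outset, and then both injectivity and surjectivity follow cleanly from the transitivity of the $G$-action on the set of $\gimel$-orbits in $(G\ltimes\gimel)\cdot v$. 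No deep input is needed beyond this orbit combinatorics; the Chevalley--Shephard--Todd dimension count mentioned earlier is not required here, since the statement is about a linear bijection of spaces of germs, not their dimensions.
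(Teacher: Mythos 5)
Your proposal is correct and follows essentially the same route as the paper: injectivity because $\pi_G(F)=0$ forces $f_{\xi}=0$ and hence $F=0$, and surjectivity via the explicit symmetrization inverse, your $\sum_{g\in G} g\cdot f_{\xi}$ being exactly the paper's formula $\pi_G^{-1}\bigl(\sum_{g\in G_{\gimel\cdot v}} g\cdot f_{\xi}\bigr)=\frac{1}{|G_{\gimel\cdot v}|}\sum_{g'\in G} g'\cdot\bigl(\sum_{g\in G_{\gimel\cdot v}} g\cdot f_{\xi}\bigr)$. Your additional checks (transitivity of $G$ on the $\gimel$-orbits in $(G\ltimes\gimel)\cdot v$ and the $G_{\xi}$-invariance ensuring coset consistency) only make explicit what the paper leaves implicit.
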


\begin{proof}
Assume that $\pi_G(F)=0$. Then $f_{\xi}=0$ and hence 
$F':=\sum\limits_{g\in G_{\gimel \cdot v}} g\cdot f_{\xi}=0$. Further, 
let us take 
$F'=\sum\limits_{g\in G_{\gimel \cdot v}} g\cdot 
f_{\xi}\in \mathcal O^{G_{\gimel \cdot v}}|_{\gimel \cdot v/G_{\gimel \cdot v}}$. Then 
$$
F'= \pi_G ( \sum_{g\in G_{\gimel \cdot v}} g\cdot f_{\xi} + 
\sum_{g\in L} g\cdot f_{\xi}). 
$$
Explicitly, the map $\pi_G^{-1}$ is given by 
$$
\pi_G^{-1} (\sum_{g\in G_{\gimel \cdot v}} g\cdot f_{\xi}) = 
\frac{1}{|G_{\gimel \cdot v}|} \sum_{g'\in G} 
g'\cdot (\sum_{g\in G_{\gimel \cdot v}} g\cdot f_{\xi}). 
$$
\end{proof}

We will need the following proposition. 
 
\begin{proposition}\label{prop isom pr} 
Let $\mathcal A$ be a subalgebra in $\mathcal S(\gimel)$ that 
preserves the vector space   $\mathfrak O^{G}|_{(G \ltimes \gimel) \cdot v/G}$. 
Then $\mathcal A$ also preserves 
$\mathfrak O^{G_{\gimel \cdot v}}|_{\gimel \cdot v/G_{\gimel \cdot v}} $ 
and the map $\pi$ is an isomorphism of $\mathcal A$-modules. 
\end{proposition}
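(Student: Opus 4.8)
The plan is to identify $\pi_G$ with an intrinsic projection and to exploit the fact that $A$, written in the form \eqref{eq operator A}, respects the partition of its support into $\gimel$-orbits. Throughout I write $A=\sum_i\sum_{h\in G}h\cdot(f_i\phi_{\xi_i})$ for the action on $G$-invariant germs; in this form $A$ is manifestly $G$-invariant, hence commutes with the $G$-action (and in particular with the $G_{\gimel\cdot v}$-action) on $\mathfrak M$. Since $\gimel$ is $G$-invariant, every translation appearing in $A$ satisfies $h\cdot\xi_i=h\xi_i h^{-1}\in\gimel$; consequently, for any $\zeta\in(G\ltimes\gimel)\cdot v$ the term $h\cdot(f_i\phi_{\xi_i})$ sends a germ at $\zeta$ to a germ at $(h\cdot\xi_i)\circ\zeta$, which lies in the same $\gimel$-orbit $\gimel\cdot\zeta$. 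Thus $A$ is ``block-diagonal'' for the decomposition of $\mathfrak M^G|_{(G\ltimes\gimel)\cdot v/G}$ indexed by $\gimel$-orbits: the germs of $A(F)$ at points of $\gimel\cdot v$ depend only on the germs of $F$ at points of $\gimel\cdot v$. On the other hand, inspecting the defining formula \eqref{eq def of pr} together with $L=\{g\in G\mid g\cdot\xi\notin\gimel\cdot v\}$, the map $\pi_G$ is exactly the operation of discarding all germs supported outside $\gimel\cdot v$, and the $G$-invariance of $F$ restricts along $\gimel\cdot v$ to precisely $G_{\gimel\cdot v}$-invariance (compare $\mathcal O^G_\xi=\mathcal O^{G_\xi}_\xi$ from Lemma~\ref{lem raznesinie}); this is why the target is $\mathfrak O^{G_{\gimel\cdot v}}|_{\gimel\cdot v/G_{\gimel\cdot v}}$.

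First I would prove the intertwining relation $\pi_G(A(F))=A(\pi_G(F))$ for all $F$, interpreted at the level of meromorphic germs, so that both $\pi_G$ and $A$ are defined before any holomorphy is invoked. By the block-diagonality just explained, the $\gimel\cdot v$-component of $A(F)$ equals the result of applying $A$ to the $\gimel\cdot v$-component of $F$; since $\pi_G$ is precisely the passage to the $\gimel\cdot v$-component, this is the asserted equality. Here the right-hand occurrence of $A$ is its tautological action, as an element of $\mathcal S(\gimel)$, on germs supported on $\gimel\cdot v$: this is well defined into meromorphic $G_{\gimel\cdot v}$-invariant germs because each term of $A$ preserves $\gimel\cdot v$ and $A$ commutes with the $G_{\gimel\cdot v}$-action.

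Next I would deduce that $\mathcal A$ preserves $\mathfrak O^{G_{\gimel\cdot v}}|_{\gimel\cdot v/G_{\gimel\cdot v}}$. Given $F_0$ in this space, set $F=\pi_G^{-1}(F_0)$; by the preceding lemma $\pi_G^{-1}$ is the normalized $G$-symmetrization, so $F\in\mathfrak O^{G}|_{(G\ltimes\gimel)\cdot v/G}$ is holomorphic. By hypothesis $A(F)$ again lies in $\mathfrak O^{G}|_{(G\ltimes\gimel)\cdot v/G}$, whence $\pi_G(A(F))$ is a holomorphic element of the target; but the intertwining relation identifies $\pi_G(A(F))$ with $A(F_0)$, so $A(F_0)$ is holomorphic and $G_{\gimel\cdot v}$-invariant. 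Hence $A$, and therefore all of $\mathcal A$, preserves $\mathfrak O^{G_{\gimel\cdot v}}|_{\gimel\cdot v/G_{\gimel\cdot v}}$, which is then an $\mathcal A$-module (note $\gimel\cdot v=(G_{\gimel\cdot v}\ltimes\gimel)\cdot v$, so this space is of the standard form, with $\gimel$ still $G_{\gimel\cdot v}$-invariant). Finally, the preceding lemma gives that $\pi:=\pi_G$ is a linear bijection, and the intertwining relation shows it is $\mathcal A$-equivariant, so it is an isomorphism of $\mathcal A$-modules.

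The main obstacle is not any hard estimate but the bookkeeping of the second paragraph: one must verify that $\pi_G$ really coincides with the projection onto the $\gimel\cdot v$-component, and that the residual symmetry of a $G$-invariant germ along $\gimel\cdot v$ is exactly the $G_{\gimel\cdot v}$-invariance recorded in the target, so that the two a priori different actions of $A$ — on $\mathfrak O^{G}|_{(G\ltimes\gimel)\cdot v/G}$ and on $\mathfrak O^{G_{\gimel\cdot v}}|_{\gimel\cdot v/G_{\gimel\cdot v}}$ — genuinely correspond under $\pi_G$. Once this identification and the $G$-invariance of $\gimel$ are in place, block-diagonality makes the intertwining automatic, and holomorphy on the smaller orbit is transported for free from the larger one via the hypothesis. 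Alternatively, the intertwining could be checked by the same explicit germ computation as in the proof of Theorem~\ref{theor action in the bundle}, but the structural argument above seems cleaner.
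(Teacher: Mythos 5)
Your proof is correct and follows essentially the same route as the paper: both hinge on the observation that, since $\gimel$ is $G$-invariant, every translation $h\cdot\xi_i$ occurring in $A$ lies in $\gimel$ and hence preserves $\gimel$-orbits of germs, which yields the intertwining $\pi_G\circ A = A\circ \pi_G$, after which holomorphy on $\mathfrak O^{G_{\gimel \cdot v}}|_{\gimel \cdot v/G_{\gimel \cdot v}}$ is transported through the bijection $\pi_G$. Your ``block-diagonality'' formulation is just a structural restatement of the paper's pointwise germ computation at $\eta\in\gimel\cdot v$ (the observation that $(h\cdot \xi_i)\circ(g\cdot\xi)\in\gimel\cdot v$ forces $g\in G_{\gimel\cdot v}$), so the two arguments coincide in substance.
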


\begin{proof}
Let $A\in \mathcal A$ be as in \eqref{eq operator A}. 
We apply $A$ to a germ 
$F=\sum\limits_{g\in G} g\cdot f_{\xi} \in \mathcal O^G_{\bar \xi}$, 
where $f_{\xi}\in \mathcal O^{G_{\xi}}_{\xi}$, $\bar \xi = G\cdot \xi$ 
and $\xi\in \gimel \cdot v$. We get
\begin{align*}
A(F) =   \sum_i\sum_{h,g\in G} (h\cdot f_i) [(g\cdot f_{\xi}) \circ  h \cdot \xi_i^{-1}].
\end{align*}
Note that, if $(h\cdot \xi_i) \circ (g \cdot \xi)\in \gimel \cdot v$, then 
$g \cdot \xi  \in \gimel \cdot v$ and hence $g\in G_{\gimel \cdot v}$. 

Now we compute the germ of $A(F)$ at the point 
$\eta:=(h_0\cdot \xi_{i_0}) \circ (g_0 \cdot\xi)\in \gimel \cdot v$:
\begin{align*}
A(F)_{\eta} = \sum_{(g,h,i)\in\Lambda} (h\cdot f_i) [(g\cdot f_{\xi}) 
\circ  h \cdot \xi_i^{-1}],
\end{align*}
where $\Lambda = \{(g,h,i)\,|\,\, (h\cdot \xi_i) \circ (g \cdot \xi) = \eta\}$. 
If $(h\cdot \xi_i) \circ (g \cdot \xi)=\eta$, then
$(g, h, i) \in \Lambda $ and we have 
$g \cdot \xi =\phi_{h\cdot \xi_i^{-1}}  (\eta)$ 
implying $g\in G_{\gimel \cdot v}$. As a consequence of these 
observations, we obtain
$$
\pi(A(F)) = A(\pi(F)).
$$
In particular, this equality implies that $A(\pi(F))$  is holomorphic and
therefore $A$ preserves $\mathfrak O^{G_{\gimel \cdot v}}|_{\gimel \cdot v/G_{\gimel \cdot v}} $. 
It also implies that $\pi$ is a homomorphism of $\mathcal A$-modules. 
The proof is complete.
\end{proof}

Here comes yet another construction of $\mathcal A$-modules. 
Let $\mathcal A$ be as above and $H$ a subgroup of $G$ such that  
$\mathcal A$ preserves the vector space $\mathfrak O^{H}|_{(H \ltimes \gimel) \cdot v/H}$.
For the pair $H\subset G$, we have the obvious inclusion 
$P^G_H: \mathcal O^{G}|_{(G \ltimes \gimel) \cdot v/G} \hookrightarrow 
\mathcal O^{H}|_{(G \ltimes \gimel) \cdot v/H}$.

\begin{lemma}\label{lem commut diagr} 
Assume that $\mathcal A$ preserves the vector spaces 
$\mathfrak O^{G}|_{(G \ltimes \gimel) \cdot v/G}$ and  
$\mathfrak O^{H}|_{(H \ltimes \gimel) \cdot v/H}$. 
Then the diagram
$$
\xymatrix{
\mathfrak O^{G}|_{(G \ltimes \gimel) \cdot v/G} 
\ar[r]^{\pi_G} \ar[d]_{P^G_H}
& \mathfrak O^{G_{\gimel \cdot v}}|_{\gimel \cdot v/G_{\gimel \cdot v}} 
\ar[d]^{P^{G_{\gimel \cdot v}}_{H_{\gimel \cdot v}}} \\
\mathfrak O^{H}|_{(G \ltimes \gimel) \cdot v/G} \ar[r]^{\pi_H}&
\mathfrak O^{H_{\gimel \cdot v}}|_{\gimel \cdot v/H_{\gimel \cdot v}},
}
$$
in which all maps are homomorphisms of $\mathcal A$-modules, commutes. 
\end{lemma}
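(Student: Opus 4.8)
The plan is to unwind the four arrows into their effect on germs and observe that the two composites carry a section to the very same germ data over $\gimel\cdot v$. First I would record the concrete description of each map. By \eqref{eq def of pr}, writing a $G$-invariant section as $F=\sum_{g\in G_{\gimel\cdot v}} g\cdot f_\xi+\sum_{g\in L} g\cdot f_\xi$ with $\xi\in\gimel\cdot v$, the horizontal map $\pi_G$ returns the first summand $\sum_{g\in G_{\gimel\cdot v}} g\cdot f_\xi$; in other words $\pi_G$ simply restricts the germ collection $F$ to the germs supported at points of $\gimel\cdot v$, the outcome being $G_{\gimel\cdot v}$-invariant because $G_{\gimel\cdot v}$ stabilises $\gimel\cdot v$ and $F$ was $G$-invariant. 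The map $\pi_H$ is the identical restriction, now recording $H_{\gimel\cdot v}$-invariance. The vertical arrows $P^G_H$ and $P^{G_{\gimel\cdot v}}_{H_{\gimel\cdot v}}$ are the tautological inclusions that reinterpret a $G$-invariant (resp.\ $G_{\gimel\cdot v}$-invariant) germ collection as an $H$-invariant (resp.\ $H_{\gimel\cdot v}$-invariant) one, leaving the underlying germs untouched.

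Commutativity of the underlying linear maps is then immediate. Starting from $F$, the route through the top-right corner first discards the germs lying off $\gimel\cdot v$ and afterwards weakens the invariance from $G_{\gimel\cdot v}$ to $H_{\gimel\cdot v}$, whereas the route through the bottom-left corner first weakens the invariance from $G$ to $H$ and afterwards discards the germs off $\gimel\cdot v$. These two operations act on logically independent parts of the data, namely which germs are retained versus which group records the invariance, so they commute; both composites return the germs of $F$ over $\gimel\cdot v$, viewed as a section of $\mathfrak O^{H_{\gimel\cdot v}}|_{\gimel\cdot v/H_{\gimel\cdot v}}$. I would make this explicit by pushing the representative $\sum_{g\in G} g\cdot f_\xi$ around both sides of the square and comparing the two outputs.

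It remains to see that all four arrows are homomorphisms of $\mathcal A$-modules. For the horizontal arrows this is Proposition~\ref{prop isom pr}, applied once to $G$ and once to $H$; the application to $H$ additionally yields that $\mathcal A$ preserves $\mathfrak O^{H_{\gimel\cdot v}}|_{\gimel\cdot v/H_{\gimel\cdot v}}$. For the inclusions $P^G_H$ and $P^{G_{\gimel\cdot v}}_{H_{\gimel\cdot v}}$ the key observation is that an element $A\in\mathcal A$, having the $G$-symmetrised form \eqref{eq operator A}, acts on germs by one and the same formula irrespective of which invariance group is being bookkept, and sends $G$-invariant germs to $G$-invariant germs; consequently $A$ commutes with the forgetful inclusions. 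The one point where I expect to spend genuine care is verifying that $\mathcal A$ preserves the target $\mathfrak O^{H}|_{(G\ltimes\gimel)\cdot v/H}$ of $P^G_H$, rather than only the space $\mathfrak O^{H}|_{(H\ltimes\gimel)\cdot v/H}$ granted by hypothesis. Since $(G\ltimes\gimel)\cdot v$ is a finite disjoint union of $(H\ltimes\gimel)$-orbits and the action of $A$ is supported orbit-by-orbit, preservation of the $H$-invariant holomorphic germs on the orbit through $v$ propagates to the whole union by the same local argument as in the proof of Theorem~\ref{theor action in the bundle}. With this in hand, every arrow is an $\mathcal A$-module map and the square commutes.
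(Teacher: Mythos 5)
Your proof is correct and takes essentially the same route as the paper: the paper's entire proof is the one-line remark that the lemma ``follows directly from the definitions,'' and your unwinding of $\pi_G$, $\pi_H$ as restriction of the germ data to $\gimel\cdot v$ (via \eqref{eq def of pr}) and of $P^G_H$, $P^{G_{\gimel\cdot v}}_{H_{\gimel\cdot v}}$ as forgetful inclusions acting trivially on the underlying germs is precisely that verification made explicit, with the $\mathcal A$-equivariance of the horizontal maps supplied, as you say, by Proposition~\ref{prop isom pr}. Your closing paragraph, on why $\mathcal A$ preserves the bottom-left space $\mathfrak O^{H}|_{(G\ltimes\gimel)\cdot v/H}$ and not merely the space over $(H\ltimes\gimel)\cdot v$ granted by the hypothesis, flags a point the paper passes over in silence; your sketch of the propagation across the finitely many $(H\ltimes\gimel)$-orbits is the right thing to worry about and is more care, not less, than the paper's own argument provides.
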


\begin{proof}
This follows directly from the definitions. 
\end{proof}

The above leads us to the following theorem.

\begin{theorem}\label{theor homomorphism of A-modules}
Assume that $\mathcal A$ preserves the vector spaces 
$\mathfrak O^{G}|_{(G \ltimes \gimel) \cdot v/G}$ and  
$\mathfrak O^{H}|_{(H \ltimes \gimel) \cdot v/H}$. 
Then we have the following commutative diagram of $\mathcal A$-modules:
$$
\xymatrix{
M(G,(G \ltimes \gimel) \cdot v) 
\ar[r]^{\tilde \pi_G} 
\ar[d]_{{\bf P}^G_H}
& M(G_{\gimel \cdot v},\gimel \cdot v) 
\ar[d]^{{\bf P}^{G_{\gimel \cdot v}}_{H_{\gimel \cdot v}}} \\
M(H,(G \ltimes \gimel) \cdot v) \ar[r]^{\tilde \pi_H}
& M(H_{\gimel \cdot v},\gimel \cdot v),
}
$$
where $\tilde{\pi}_G$ and $\tilde{\pi}_H$ are induced by $\pi_G$ and 
$\pi_H$ from Proposition \ref{prop isom pr}, respectively. 
Moreover, the map 
$$
\Upsilon = {\bf P}^G_H \circ \pi_G^{-1}: 
\mathfrak O^{G_{\gimel \cdot v}}|_{\gimel \cdot v/G_{\gimel \cdot v}} 
\longrightarrow  M(H,(G \ltimes \gimel) \cdot v)
$$ 
is also a homomorphism of $\mathcal A$-modules. 
\end{theorem}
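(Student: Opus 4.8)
Let me understand what's being claimed. We have established (Proposition~\ref{prop isom pr}) that $\pi_G$ is an isomorphism of $\mathcal{A}$-modules, and (Lemma~\ref{lem commut diagr}) that a certain diagram of sheaf-level maps commutes with all maps being $\mathcal{A}$-module homomorphisms. The theorem asks to lift this to the fibration-based modules $M(G, (G\ltimes\gimel)\cdot v)$, producing a commutative diagram and concluding that $\Upsilon = \mathbf{P}^G_H \circ \pi_G^{-1}$ is an $\mathcal{A}$-module homomorphism.

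The key realization: the modules $M(-, -)$ are defined as $\mathfrak{E}|_{\cdots}$, which are quotients of the $\mathfrak{O}$-type sheaf data. The maps $\tilde\pi_G$, $\mathbf{P}^G_H$ should be the maps *induced* on these fibration quotients by the sheaf-level maps $\pi_G$, $P^G_H$.

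**The plan.**

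The plan is to derive everything from the two results already proven by passing to the induced maps on fibrations. First I would recall that by Theorem~\ref{theor action in the bundle}, each of the sheaf-level $\mathcal{A}$-module maps $\pi_G$, $\pi_H$, $P^G_H$, $P^{G_{\gimel\cdot v}}_{H_{\gimel\cdot v}}$ descends to the fibration quotients $\mathfrak{E}^G|_{\cdots}$, giving the maps $\tilde\pi_G$, $\tilde\pi_H$, $\mathbf{P}^G_H$, $\mathbf{P}^{G_{\gimel\cdot v}}_{H_{\gimel\cdot v}}$. Concretely, since $M(G,(G\ltimes\gimel)\cdot v)$ is by definition $\mathfrak{E}|_{\cdots}$ obtained from $\mathfrak{O}^G|_{\cdots}$ by quotienting out the subspaces $\phi_{\xi}(\mathcal{O}^{G_\xi}_e \cap \mathcal{J}_e)$, I would check that each sheaf map respects these subspaces so that the quotient maps are well-defined. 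This is the content of the well-definedness, and it follows from the functoriality of the construction in Lemma~\ref{lem raznesinie} together with the explicit formulas for $\pi_G$ and $P^G_H$.

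Then I would establish commutativity of the fibration diagram. Since the fibration-level maps are induced from the sheaf-level maps, and the sheaf-level diagram in Lemma~\ref{lem commut diagr} already commutes, commutativity of the induced diagram is automatic: the quotient functor is a functor, so applying it to a commuting square yields a commuting square. That each induced map is an $\mathcal{A}$-module homomorphism follows because the sheaf-level maps are $\mathcal{A}$-module homomorphisms and, by Theorem~\ref{theor action in the bundle}, the $\mathcal{A}$-action on $\mathfrak{E}^G|_{\cdots}$ is precisely the one induced from the action on $\mathfrak{O}^G|_{\cdots}$; hence the induced maps intertwine the induced actions.

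**Concluding the statement about $\Upsilon$.**

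Finally, for the map $\Upsilon$, the point is that $\pi_G$ is an isomorphism of $\mathcal{A}$-modules by Proposition~\ref{prop isom pr}, so $\pi_G^{-1}$ is an $\mathcal{A}$-module homomorphism, and $\mathbf{P}^G_H$ is one by the preceding argument; thus the composite $\Upsilon = \mathbf{P}^G_H \circ \pi_G^{-1}$ is an $\mathcal{A}$-module homomorphism as a composition of such. The only subtle point worth spelling out is matching the source of $\Upsilon$: the statement writes $\mathfrak{O}^{G_{\gimel\cdot v}}|_{\gimel\cdot v/G_{\gimel\cdot v}}$, which via $\pi_G^{-1}$ is identified with $M(G,(G\ltimes\gimel)\cdot v)$ (or its sheaf-model), so $\Upsilon$ is really $\mathbf{P}^G_H$ read through this identification.

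**The main obstacle.** I expect the genuine work to be the well-definedness of the induced maps on the fibration quotients, i.e. verifying that the sheaf-level maps carry the subspaces $\phi_{\xi}(\mathcal{O}^{G_\xi}_e \cap \mathcal{J}_e)$ into their counterparts at the target. Everything downstream (commutativity, the module-homomorphism property, the conclusion about $\Upsilon$) is then formal functoriality. It is quite plausible the authors regard even this verification as routine given Lemma~\ref{lem raznesinie}, and dispatch the whole theorem with a one-line "this follows from the definitions and the previous results."
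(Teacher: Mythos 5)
Your proposal is correct and follows essentially the same route as the paper: the authors likewise invoke Theorem~\ref{theor action in the bundle} for the module structures, obtain commutativity directly from Lemma~\ref{lem commut diagr}, and get $\Upsilon$ formally from Proposition~\ref{prop isom pr}. The one point you left vague --- well-definedness of the induced maps on the quotients, which you rightly identified as the only real content --- is dispatched in the paper exactly as you predicted, by a one-line ``standard factorization argument'': since $G_{\gimel\cdot v}\subset G$, every $G$-invariant is $G_{\gimel\cdot v}$-invariant, so the ideal factored out in the source lands inside the ideal factored out in the target.
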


\begin{proof}
Theorem \ref{theor action in the bundle} defines all involved
$\mathcal A$-module structures. Let us argue, for example, that the 
morphism $\tilde{\pi}_G$ of $\mathcal A$-modules induced by $\pi_G$ is 
well-defined. This follows from the fact that, to obtain the module 
$M(G,(G \ltimes \gimel) \cdot v)$, we factor out by the ideal generated 
by $G$-invariants and, to obtain the module 
$M(G_{\gimel \cdot v},\gimel \cdot v)$, we factor out by the ideal 
generated by $G_{\gimel \cdot v}$-invariants. As we obviously have 
$G_{\gimel \cdot v} \subset G$, the necessary statement is obtained
by the standard factorization argument.  The commutativity of the dia\-gram 
follows from   Lemma~\ref{lem commut diagr}. 
\end{proof}

\subsection{The vector space 
$(\mathcal O_{\mathsf{C}}/ \mathcal J_{\mathsf{C}})_e$ is finite dimensional} 
\label{s3.5}

In this section we show that the vector space
$(\mathcal O_{\mathsf{C}}/ \mathcal J_{\mathsf{C}})_e$ is finite dimensional.
In particular, this implies that the fibration $\mathbb E$ has 
finite dimensional fibers. Several observations of this section were
pointed out to us by D.~Timashev. 
 
Let $V$ be a complex-analytic or linear algebraic Lie group. 
Any linear algebraic group is a complex-analytic Lie group, see 
\cite{Hum}. Recall that we emphasize by the subscripts $\mathsf{C}$ and 
$\mathsf{A}$ objects in the complex-analytic and the algebraic category,
respectively. For example,  we denote by $\mathcal O_{\mathsf{C}}$ and by 
$\mathcal O_{\mathsf{A}}$ the sheaves of complex-analytic (holomorphic) 
and algebraic (polynomial) functions, respectively. 

Let $V$ be a linear algebraic group. Note that we can choose coordinates 
$(x_i)$ in a neighborhood $U$ of the identity  $e\in V$ such that $e$ 
is the origin and the vector space $W= \langle x_1,\ldots, x_n \rangle$ 
is $G$-invariant. Indeed, denote by $\mathfrak m_e$ the maximal ideal 
in $(\mathcal O_{\mathsf{A}})_e$. Then $\mathfrak m_e^2$ is a $G$-invariant
subspace in $\mathfrak m_e$. We choose any coordinates $\{y_1,\ldots, y_n\}$ 
in  $U$. Let $W'$ be the $\mathbb C$-span of 
$\{ g\cdot y_i\,\,|\,\, i=1,\ldots, n, g\in G \}$.  Then $W'$ and 
$W'\cap \mathfrak m_e^2$ are $G$-invariant. Since $G$ is finite, 
there exists $G$-invariant subspace $W$ such that 
$W' = W\oplus (W'\cap \mathfrak m_e^2)$. Let $x_1,\ldots, x_n$ be a 
basis in  $W$. If 
$f\in (\mathcal O_{\mathsf{C}}^G)_e$, then there exists a decomposition
$f=\sum\limits_{k=0}^{\infty}f_k$, where $f_k$ are $G$-invariant homogeneous
polynomials in $(x_i)$ of degree $k$. If $V$ is complex analytic but not 
algebraic, we mean by $(\mathcal O_{\mathsf{A}})_e$ the algebra of germs 
of polynomial functions in $(x_i)$. 

A classical fact from the invariant theory is that the extension 
$(\mathcal O_{\mathsf{A}}^G)_e\subset (\mathcal O_{\mathsf{A}})_e$ 
of rings is integral. Indeed, any polynomial $f\in (\mathcal O_{\mathsf{A}})_e$ 
is integral over $(\mathcal O_{\mathsf{A}}^G)_e$ since it is a root of 
the polynomial $\prod\limits_{g\in G} (t- g\cdot f)$. In particular, $f^{|G|}$ 
is a linear combination of $f^p$, where $p< |G|$, with coefficients from
$(\mathcal O_{\mathsf{A}}^G)_e$. 

\begin{lemma}
We have that $(\mathcal O_{\mathsf{A}})_e$ is a finitely generated 
$(\mathcal O_{\mathsf{A}}^G)_e$-module and the minimal 
number of generators is less than or equal to $|G|^{\dim V}$. 
\end{lemma}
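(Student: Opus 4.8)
The plan is to combine the integrality established immediately above the statement with the fact that $(\mathcal O_{\mathsf{A}})_e$ is generated as an algebra by the coordinates $x_1,\ldots,x_n$, where $n=\dim V$. First I would record the precise consequence of integrality: for each $i$, the coordinate $x_i$ is a root of the monic polynomial $\prod_{g\in G}(t-g\cdot x_i)\in (\mathcal O_{\mathsf{A}}^G)_e[t]$ of degree $|G|$, so that
\[
x_i^{|G|}\in \sum_{p=0}^{|G|-1}(\mathcal O_{\mathsf{A}}^G)_e\,x_i^{p}.
\]
In other words, every power $x_i^{a}$ with $a\ge |G|$ already lies in the $(\mathcal O_{\mathsf{A}}^G)_e$-span of $1,x_i,\ldots,x_i^{|G|-1}$.

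Next I would use that $(\mathcal O_{\mathsf{A}})_e$ is generated as a $\mathbb C$-algebra by $x_1,\ldots,x_n$; indeed a germ of a polynomial function is determined by the polynomial itself, so $(\mathcal O_{\mathsf{A}})_e\cong \mathbb C[x_1,\ldots,x_n]$. Consequently $(\mathcal O_{\mathsf{A}})_e$ is spanned as an $(\mathcal O_{\mathsf{A}}^G)_e$-module by all monomials $x_1^{a_1}\cdots x_n^{a_n}$ with $a_i\ge 0$. I would then bound the exponents by induction on the total degree $a_1+\cdots+a_n$: whenever some $a_i\ge |G|$, the displayed relation rewrites $x_1^{a_1}\cdots x_n^{a_n}$ as an $(\mathcal O_{\mathsf{A}}^G)_e$-combination of monomials of strictly smaller total degree, and the induction terminates at monomials all of whose exponents satisfy $0\le a_i\le |G|-1$.

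Therefore the $|G|^{n}$ monomials $x_1^{a_1}\cdots x_n^{a_n}$ with $0\le a_i< |G|$ form a finite generating set for $(\mathcal O_{\mathsf{A}})_e$ as a module over $(\mathcal O_{\mathsf{A}}^G)_e$. This establishes finite generation, and since the minimal number of generators is at most the cardinality of any generating set, it is bounded by $|G|^{n}=|G|^{\dim V}$, as claimed.

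I would expect no genuine obstacle here: this is the standard ``integral of bounded degree implies module generation by bounded monomials'' reduction, and the only points requiring a little care are the identification of polynomial germs with polynomials (so that algebra generation by the $x_i$ is legitimate) and phrasing the exponent reduction as a single clean induction on total degree, reducing all variables simultaneously rather than one at a time.
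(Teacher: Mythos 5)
Your proposal is correct and follows essentially the same route as the paper: the paper's own proof is exactly the one-line observation that $x_i^{|G|}$ lies in the $(\mathcal O_{\mathsf{A}}^G)_e$-span of $1, x_i, \ldots, x_i^{|G|-1}$ (via the monic polynomial $\prod_{g\in G}(t-g\cdot x_i)$ recorded just before the lemma), with the monomial-reduction induction left implicit. You have merely spelled out that standard induction on total degree, which is a faithful completion of the paper's argument rather than a different one.
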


\begin{proof}
The proof follows from the fact that $x_i^{|G|}$ is a linear combination 
of $x_i^p$, where $p< |G|$, with coefficients from $(\mathcal O_{\mathsf{A}}^G)_e$. 
\end{proof}

\begin{corollary}
The vector space $(\mathcal O_{\mathsf{A}}/ \mathcal J_{\mathsf{A}})_e$ 
is finite dimensional and its dimension is less than or equal to $|G|^{\dim V}$. 
\end{corollary}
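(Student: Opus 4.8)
The plan is to obtain the corollary as a direct and essentially formal consequence of the preceding Lemma. First I would unwind the definition of $\mathcal J_{\mathsf A}$: by construction it is the ideal of $(\mathcal O_{\mathsf A})_e$ generated by the augmentation ideal $\mathfrak a := \{ f \in (\mathcal O_{\mathsf A}^G)_e \,:\, f(e) = 0 \}$ of invariant germs vanishing at $e$. Consequently
\[
(\mathcal O_{\mathsf A}/\mathcal J_{\mathsf A})_e = (\mathcal O_{\mathsf A})_e \big/ \mathfrak a\cdot (\mathcal O_{\mathsf A})_e = (\mathcal O_{\mathsf A})_e \otimes_{(\mathcal O_{\mathsf A}^G)_e} \mathbb C,
\]
the base change of $(\mathcal O_{\mathsf A})_e$ along the augmentation map $(\mathcal O_{\mathsf A}^G)_e \to \mathbb C$, $f\mapsto f(e)$, whose kernel is $\mathfrak a$.

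Second, by the Lemma, $(\mathcal O_{\mathsf A})_e$ is generated as an $(\mathcal O_{\mathsf A}^G)_e$-module by $N\le |G|^{\dim V}$ elements; concretely one may take the monomials $x_1^{a_1}\cdots x_n^{a_n}$ with $0\le a_i < |G|$, of which there are exactly $|G|^{\dim V}$. Write these generators as $m_1,\ldots,m_N$. It then suffices to verify that their images $\bar m_1,\ldots,\bar m_N$ span the quotient over $\mathbb C$: given $f\in (\mathcal O_{\mathsf A})_e$, I would write $f=\sum_j c_j m_j$ with $c_j\in(\mathcal O_{\mathsf A}^G)_e$ and split each coefficient as $c_j=c_j(e)+\big(c_j-c_j(e)\big)$, where $c_j(e)\in\mathbb C$ and $c_j-c_j(e)\in\mathfrak a$. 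Since $\big(c_j-c_j(e)\big)m_j\in\mathcal J_{\mathsf A}$, one gets $f\equiv\sum_j c_j(e)\,m_j$ modulo $\mathcal J_{\mathsf A}$, a $\mathbb C$-linear combination of the $\bar m_j$. Hence $\dim_{\mathbb C}(\mathcal O_{\mathsf A}/\mathcal J_{\mathsf A})_e\le N\le |G|^{\dim V}$.

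I do not expect any serious obstacle here: all the genuine content—the integrality of the extension $(\mathcal O_{\mathsf A}^G)_e\subset(\mathcal O_{\mathsf A})_e$ and the resulting module-finiteness with the explicit bound on the number of generators—is already packaged in the Lemma. The only thing left to observe is the purely formal fact that passing to the quotient by $\mathcal J_{\mathsf A}$ reduces the invariant coefficient ring to $\mathbb C$, so that a module-generating set becomes a $\mathbb C$-spanning set of the (therefore finite-dimensional) quotient, with the same cardinality bound.
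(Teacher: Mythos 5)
Your proposal is correct and matches the paper's intent exactly: the paper states this corollary without proof, as an immediate consequence of the preceding Lemma, and your argument is precisely the routine deduction being left to the reader --- module generators of $(\mathcal O_{\mathsf{A}})_e$ over $(\mathcal O_{\mathsf{A}}^G)_e$ become a $\mathbb C$-spanning set of the quotient because reducing modulo $\mathcal J_{\mathsf{A}}$ specializes each invariant coefficient $c_j$ to its value $c_j(e)$. Your identification of $\mathcal J_{\mathsf{A}}$ as $\mathfrak a\cdot(\mathcal O_{\mathsf{A}})_e$ and the resulting base-change description are both consistent with the paper's definition of $\mathcal J_e$, so nothing is missing.
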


\begin{theorem}\label{cor dim of algebraic anf holom coinsiede}
Let $V$ be a complex analytic or linear algebraic group and $G$ a 
finite group acting on $V$. Then 
$$
(\mathcal O_{\mathsf{A}}/ \mathcal J_{\mathsf{A}})_e \simeq 
(\mathcal O_{\mathsf{C}}/ \mathcal J_{\mathsf{C}})_e.
$$ 
In particular,  $(\mathcal O_{\mathsf{C}}/ \mathcal J_{\mathsf{C}})_e$ 
is finite dimensional and its dimension is less than or equal to $|G|^{\dim V}$. 
\end{theorem}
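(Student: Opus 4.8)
The plan is to compare both local quotients with their common formal completion, thereby routing the comparison through a purely formal object where the two categories agree by construction. After passing to germs at $e$, I work in coordinates $x_1,\dots,x_n$ chosen as in the paragraph preceding the theorem, so that $W=\langle x_1,\dots,x_n\rangle$ is $G$-invariant and $G$ acts linearly, hence degree-preservingly, on polynomials and power series in the $x_i$. Write $R_{\mathsf A}:=(\mathcal O_{\mathsf A})_e$ for the algebraic local ring (the localization of $\mathbb C[x_1,\dots,x_n]$ at $\mathfrak m=(x_1,\dots,x_n)$) and $R_{\mathsf C}:=(\mathcal O_{\mathsf C})_e=\mathbb C\{x_1,\dots,x_n\}$ for the analytic local ring. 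Both are Noetherian local rings sharing the same $\mathfrak m$-adic completion $\widehat R=\mathbb C[[x_1,\dots,x_n]]$. Let $\widehat{\mathcal J}$ be the ideal of $\widehat R$ generated by the $G$-invariant formal power series vanishing at $e$. The goal is to show that both $R_{\mathsf A}/\mathcal J_{\mathsf A}$ and $R_{\mathsf C}/\mathcal J_{\mathsf C}$ are naturally isomorphic to $\widehat R/\widehat{\mathcal J}$; since the inclusions $R_{\mathsf A}\hookrightarrow R_{\mathsf C}\hookrightarrow \widehat R$ carry $\mathcal J_{\mathsf A}$ into $\mathcal J_{\mathsf C}$ and $\mathcal J_{\mathsf C}$ into $\widehat{\mathcal J}$, the induced maps on quotients will then force the natural map $R_{\mathsf A}/\mathcal J_{\mathsf A}\to R_{\mathsf C}/\mathcal J_{\mathsf C}$ to be an isomorphism.

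First I would transport finiteness from the algebraic side. By the Corollary preceding the theorem, $R_{\mathsf A}/\mathcal J_{\mathsf A}$ is finite-dimensional, hence Artinian, so $\mathfrak m^N\subseteq \mathcal J_{\mathsf A}$ for some $N$. As the generators of $\mathcal J_{\mathsf A}$ are polynomial invariants vanishing at $e$, they are in particular holomorphic invariants vanishing at $e$, whence $\mathcal J_{\mathsf A}R_{\mathsf C}\subseteq \mathcal J_{\mathsf C}$ and therefore $\mathfrak m^N R_{\mathsf C}\subseteq \mathcal J_{\mathsf C}$. Consequently $R_{\mathsf C}/\mathcal J_{\mathsf C}$ is a quotient of $R_{\mathsf C}/\mathfrak m^NR_{\mathsf C}$ and is finite-dimensional as well. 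Being finite-dimensional, both quotients are $\mathfrak m$-adically complete and coincide with their completions; since for a finite module over a Noetherian local ring completion is given by $-\otimes_R\widehat R$, this yields canonical isomorphisms $R_{\mathsf A}/\mathcal J_{\mathsf A}\cong \widehat R/\mathcal J_{\mathsf A}\widehat R$ and $R_{\mathsf C}/\mathcal J_{\mathsf C}\cong \widehat R/\mathcal J_{\mathsf C}\widehat R$.

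It then remains to identify the extended ideals, i.e. to prove $\mathcal J_{\mathsf A}\widehat R=\mathcal J_{\mathsf C}\widehat R=\widehat{\mathcal J}$. The inclusions $\mathcal J_{\mathsf A}\widehat R\subseteq \mathcal J_{\mathsf C}\widehat R\subseteq \widehat{\mathcal J}$ are immediate, since each ideal's generating invariants sit among the next ones. For the reverse inclusion $\widehat{\mathcal J}\subseteq \mathcal J_{\mathsf A}\widehat R$ I would take a generator, a formal invariant $f$ with $f(e)=0$, and decompose it into homogeneous components $f=\sum_{d\ge 1}f_d$. Because $G$ acts linearly and degree-preservingly, each $f_d$ is a $G$-invariant homogeneous polynomial of positive degree, so $f_d\in\mathcal J_{\mathsf A}$; the partial sums therefore lie in $\mathcal J_{\mathsf A}\widehat R$, and since ideals of the Noetherian complete local ring $\widehat R$ are $\mathfrak m$-adically closed, the limit $f$ lies in $\mathcal J_{\mathsf A}\widehat R$ as well. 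Combining the three paragraphs gives $R_{\mathsf A}/\mathcal J_{\mathsf A}\cong \widehat R/\widehat{\mathcal J}\cong R_{\mathsf C}/\mathcal J_{\mathsf C}$, with the composite equal to the natural completion map, and the bound $\le |G|^{\dim V}$ is inherited from the algebraic Corollary.

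The main obstacle is precisely the transition between the two categories, and the point of passing through $\widehat R$ is to avoid the more delicate statement that every holomorphic $G$-invariant germ is a holomorphic function of a fixed set of polynomial invariant generators (a Schwarz-type theorem), which would give a direct comparison of the generators of $\mathcal J_{\mathsf C}$ and $\mathcal J_{\mathsf A}$ but requires genuine analysis. With the completion approach the only analytic inputs are that $\mathbb C\{x\}$ is Noetherian with completion $\mathbb C[[x]]$ and the elementary inclusion $\mathcal J_{\mathsf A}\subseteq\mathcal J_{\mathsf C}$; all the rest is formal commutative algebra. I expect the one step needing care to be the identification $\mathcal J_{\mathsf C}\widehat R=\widehat{\mathcal J}$, namely checking that neither the analytic nor the formal invariants enlarge the ideal beyond what the polynomial invariants already generate, which is exactly what the homogeneous-decomposition-plus-closedness argument secures.
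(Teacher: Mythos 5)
Your proof is correct, but it takes a genuinely different route from the paper's. The paper proves directly that the natural map $(\mathcal O_{\mathsf A}/\mathcal J_{\mathsf A})_e \to (\mathcal O_{\mathsf C}/\mathcal J_{\mathsf C})_e$ is bijective: injectivity by decomposing a polynomial lying in $(\mathcal J_{\mathsf C})_e$ into homogeneous components, each of which lands in $(\mathcal J_{\mathsf A})_e$ because $G$ acts linearly in the chosen coordinates; and surjectivity by showing $\mathfrak m_e^{N+1}\subseteq (\mathcal J_{\mathsf C})_e$, where $N$ is the maximal degree of homogeneous generators $z_1,\dots,z_p$ of $(\mathcal O_{\mathsf A})_e$ over $(\mathcal O_{\mathsf A}^G)_e$ --- proved for polynomial germs by the same homogeneous-decomposition trick, and extended to arbitrary holomorphic germs by an induction based on the Weierstrass preparation theorem, which is the analytically delicate part of the paper's argument. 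You instead route everything through the common completion $\widehat R=\mathbb C[[x_1,\dots,x_n]]$: the preceding corollary makes $R_{\mathsf A}/\mathcal J_{\mathsf A}$ Artinian, so $\mathfrak m^N\subseteq \mathcal J_{\mathsf A}$, which transported into $R_{\mathsf C}$ makes $R_{\mathsf C}/\mathcal J_{\mathsf C}$ finite dimensional too; both quotients are then discrete, hence complete, hence equal to $\widehat R/\mathcal J_{\mathsf A}\widehat R$ and $\widehat R/\mathcal J_{\mathsf C}\widehat R$; and the identification $\mathcal J_{\mathsf A}\widehat R=\mathcal J_{\mathsf C}\widehat R=\widehat{\mathcal J}$ is your formal analogue of the paper's homogeneous-decomposition step, closed off by the $\mathfrak m$-adic closedness of ideals in $\widehat R$. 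Your version buys several things: the Weierstrass induction disappears; surjectivity comes for free (every class has a polynomial representative of degree below $N$ since $\mathfrak m^N R_{\mathsf C}\subseteq\mathcal J_{\mathsf C}$); and the argument is portable to any pair of Noetherian local rings sharing a completion. The paper's version is self-contained at the level of function theory and does not use the finite-dimensionality corollary as input, instead re-deriving the degree bound from the module generators $z_s$. Two points you handled correctly but should keep explicit: the degree-preserving $G$-action is exactly what the paper's choice of the invariant space $W=\langle x_1,\dots,x_n\rangle$ guarantees, and your homogeneous decompositions depend on it; and your only analytic input, the Noetherianity of $\mathbb C\{x_1,\dots,x_n\}$ with completion $\mathbb C[[x_1,\dots,x_n]]$, is classically proved via Weierstrass division, so the analysis is black-boxed into a standard citation rather than avoided outright --- which is perfectly legitimate.
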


\begin{proof} 
We have the obvious map
\begin{equation}\label{eq isom of O/J}
(\mathcal O_{\mathsf{A}}/ \mathcal J_{\mathsf{A}})_e \longrightarrow 
(\mathcal O_{\mathsf{C}}/ \mathcal J_{\mathsf{C}})_e, \quad f \mapsto f+ 
(\mathcal J_{\mathsf{C}})_e. 
\end{equation}
Let us show that this map is a bijection. 

\smallskip

\noindent{\it Step 1.} 
Let us first show that the map \eqref{eq isom of O/J} is injective. 
To start with, 
assume that $f\in (\mathcal O_{\mathsf{A}})_e\cap (\mathcal J_{\mathsf{C}})_e$. 
Then $f=  \sum\limits_{j=1}^s f_{1j} f_{2j}$, where 
$f_{1j}=\sum\limits_{k=0}^{\infty} f^{j1}_k\in (\mathcal O_{\mathsf{C}})_e$, 
$f_{2j} = \sum\limits_{p=1}^{\infty} f^{j2}_p\in (\mathcal O_{\mathsf{C}}^G)_e$,
$f^{j1}_k$ are homogeneous polynomials in $(x_i)$ of degree $k$ and $f^{j2}_p$ 
are homogeneous $G$-invariant polynomials in $(x_i)$ of degree $p$. 
We see that the polynomial 
$f=\sum\limits_{j=1}^s \sum\limits_{k=0}^{\infty}  
\sum\limits_{p=1}^{\infty} f^{j1}_k f^{j2}_p$ is an element in 
$ (\mathcal J_{\mathsf{A}})_e$.

\smallskip
\noindent{\it Step 2.} Let us now show that the map (\ref{eq isom of O/J}) is surjective. Denote by $z_1, \dots, z_p$ a system of generators for the
$(\mathcal O_{\mathsf{A}}^G)_e$-module $(\mathcal O_{\mathsf{A}})_e$ and set
$N=\max\limits_{s} \{\deg z_s \}$. Let us take $f\in \mathfrak m^{N+1}_e$, 
where $\mathfrak m_e$ is the maximal ideal in $(\mathcal O_{\mathsf{C}})_e$.

Assume first that $f= \sum\limits_{i=N+1}^tf_i$, where $f_i$ is a 
homogeneous polynomial of degree $i$, is a polynomial. The polynomial $\prod\limits_{g\in G} (t- g\cdot f)$,  considered above, 
is homogeneous. Hence we can assume that $z_j$ are homogeneous and 
$f_i = \sum\limits_j f_{ij} z_j$ is a decomposition with homogeneous 
$G$-invariant coefficients. Since $\deg f_i>N$, we conclude that 
$f\in (\mathcal J_{\mathsf{C}})_e$.

Further, let us take 
$f=\sum\limits_{i=N+1}^{\infty}f_i\in (\mathcal O_{\mathsf{C}})_e$, 
where $f_i$ are homogeneous polynomials in $(x_i)$ of degree $i$. 
Assume that $f$ is not identically equal to zero on the $x_n$-axis 
(we may ensure this by a linear change of coordinates).
By  the Weierstrass preparation theorem, we have $f= P f_1$, where 
$P= x_n^r+ a_{r-1} x_n^{r-1} + \cdots +  a_{1} x_n + a_{0}$ 
is a Weierstrass polynomial and $f_1$ is a unit. Here $a_i$ is a 
holomorphic function in $x_1,\ldots, x_{n-1}$, for any $i$. 
Since $f_1$ is a unit, $P = f f_1^{-1}\in \mathfrak m^{N+1}_e$. 
Note that in the Taylor expansions of $a_{\alpha} x_n^{\alpha}$ and 
$a_{\beta} x_n^{\beta}$ in a neighborhood of $e$, where $\alpha\ne \beta$, 
we do not have equal summands. Therefore, 
$a_{\alpha} x_n^{\alpha}\in  \mathfrak m^{N+1}_e$ for any $\alpha$. 
Similarly, we apply the Weierstrass preparation theorem  to $a_{\alpha}$ 
and proceed inductively.  We obtain a polynomial in $\mathfrak m^{N+1}_e$ 
that, by the above, belongs to $(\mathcal J_{\mathsf{C}})_e$. Now, 
assume, by induction, that 
$a_{\alpha} x_n^{\alpha} \in (\mathcal J_{\mathsf{C}})_e$. Hence  
$P\in (\mathcal J_{\mathsf{C}})_e$ and therefore 
$f= P f_1 \in  (\mathcal J_{\mathsf{C}})_e$. 
 
Now we can show that the map \eqref{eq isom of O/J} is surjective. 
Indeed, by the above, any  element
$F\in (\mathcal O_{\mathsf{C}}/ \mathcal J_{\mathsf{C}})_e$ 
has a polynomial representative. This completes the proof. 
\end{proof}

Let $\mathcal A$ be as in Theorem \ref{theor homomorphism of A-modules} and $\mathcal B\subset \mathcal S$ be the algebra of $G$-invariant functions. 
Assume, in addition, that $\mathcal B\subset \mathcal A$  

\begin{proposition}\label{prop all modules are GZ}
The $\mathcal A$-modules constructed in 
Theorem~\ref{theor homomorphism of A-modules} are Harish-Chandra modules. 
\end{proposition}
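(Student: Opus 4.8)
The plan is to reduce the statement to the finite dimensionality of the fibers of the fibration $\mathbb E$, which has already been established in Theorem~\ref{cor dim of algebraic anf holom coinsiede}. First I would recall that each of the modules appearing in Theorem~\ref{theor homomorphism of A-modules}, namely $M(G,(G \ltimes \gimel)\cdot v)$, $M(G_{\gimel \cdot v},\gimel \cdot v)$, $M(H,(G \ltimes \gimel)\cdot v)$ and $M(H_{\gimel \cdot v},\gimel \cdot v)$, is by construction a direct sum of the fibers $\mathbb E^G_{\bar\xi}$ (respectively $\mathbb E_{x}$) of $\mathbb E^G$ (respectively $\mathbb E$) indexed by the points of a single orbit. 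By definition each such fiber is, up to the isomorphism $\phi_{\xi}$, a subquotient of $\mathcal O_e/\mathcal J_e$ built from $G_{\xi}$-invariants; since $G$ and the relevant stabilizers are finite, Theorem~\ref{cor dim of algebraic anf holom coinsiede} shows that every fiber $\mathbb E^G_{\bar\xi}$ is finite dimensional, with dimension bounded above by $|G|^{\dim V}$.

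Next I would analyse the action of $\mathcal B$. An element of $\mathcal B$ is a global $G$-invariant function $b$, which, viewed inside $\mathcal S$, is of the form $b\,\phi_e$ with trivial group part. Its action $\mathcal M_{\xi}\to \mathcal M_{e\cdot\xi}=\mathcal M_{\xi}$ therefore does not move the base point: $\mathcal B$ acts by multiplication by the germ of $b$ and preserves each summand $\mathcal O^{G}_{\xi}$ of $\mathfrak O^{G}|_{(G \ltimes \gimel)\cdot v/G}$. Since the fiber $\mathbb E^G_{\bar\xi}$ is obtained from $\mathcal O^{G}_{\xi}$ by factoring out an ideal, and multiplication by a ring element always preserves an ideal, this multiplication descends to a well-defined operator on $\mathbb E^G_{\bar\xi}$. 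Consequently $\mathcal B$ acts diagonally with respect to the decomposition into fibers, preserving each individual summand $\mathbb E^G_{\bar\xi}$.

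Finally I would combine the two observations. Any element $m$ of such a module has finite support, hence lies in a finite subsum $\mathbb E^G_{\bar\xi_1}\oplus\cdots\oplus\mathbb E^G_{\bar\xi_k}$. By the first step this space is finite dimensional, and by the second step it is $\mathcal B$-stable; therefore $\mathcal B\cdot m$ is contained in a finite-dimensional subspace. This is precisely the local finiteness of the action of $\mathcal B$, so the module is a Harish-Chandra module. The same reasoning applies verbatim to each of the four modules in the diagram of Theorem~\ref{theor homomorphism of A-modules}, as well as to the target of $\Upsilon$, since all of them are transported by $\mathcal A$-module isomorphisms from fiber-sum modules of this type.

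I expect no serious obstacle here: the proposition is essentially a corollary of Theorem~\ref{cor dim of algebraic anf holom coinsiede}. The only point that genuinely requires care is to verify that the trivial-shift elements of $\mathcal S$ act diagonally, i.e. that multiplication by an invariant germ is both fiber-preserving and well-defined on the quotient fibers $\mathbb E^G_{\bar\xi}$; but this follows at once from the identity $e\cdot\xi=\xi$ together with the fact that an ideal absorbs multiplication by elements of its ambient ring.
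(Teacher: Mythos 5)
Your proof is correct and follows essentially the same route as the paper, whose own proof is a one-line reduction to Theorem~\ref{cor dim of algebraic anf holom coinsiede}: you have simply made explicit the two facts the paper leaves implicit, namely that elements of $\mathcal B$ act fiberwise (multiplication by an invariant germ preserves each $\mathbb E^G_{\bar\xi}$ since it has trivial shift part and ideals absorb multiplication) and that finite support plus finite-dimensional fibers gives local finiteness. No gaps; this is a faithful expansion of the paper's argument.
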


\begin{proof}
This follows from~Theorem \ref{cor dim of algebraic anf holom coinsiede}.  
\end{proof}

\section{Rational Galois orders and their modules}\label{s4}

\subsection{Reflection groups and divided difference operators}
\label{sec reflection group, roots, div diff}\label{s4.1}

Let $V_{\mathbb R}$ be a vector space over $\mathbb R$ equipped with 
a non degenerate symmetric bilinear form $(\,,)$. Set 
$V= \mathbb C\otimes_{\mathbb R} V_{\mathbb R}$ and denote the corresponding to $(\,,)$ 
inner product on $V$ by the same symbol. For $v\in V$, the {\em reflection 
$\sigma_v$ with respect to $v$} is the linear transformation of $V$ that fixes the 
hyperplane $\{ w\in V\,\,|\,\, (w,v)=0\}$ and maps $v$ to $-v$. It is given by the formula 
$\sigma_v (x) = x- \frac{2(x,v)}{(v,v)}v$. A {\it root system} $\Phi$ 
is a finite subset in $V_{\mathbb R}\setminus\{0\}$ that satisfies the following properties:
\begin{itemize}
\item[(I)] If $x,y\in \Phi$, then $\sigma_{x}(y) \in \Phi$. 
\item[(II)] If $x$ and $k x$ in $\Phi$, for some $k\in\mathbb{R}$, then $k=\pm 1$.
\end{itemize}   
For a root system $\Phi$, the corresponding {\it reflection group} 
$G\subset GL(V)$ is the group generated by all reflections $\alpha_v$,
where $v\in \Phi$. A {\it system of simple roots} or a {\it basis} of 
$\Phi$ is a linearly independent subset in $\Phi$ such that every 
$x\in \Phi$ can be written as a linear combination of elements from $\Psi$ 
with all non-negative or all non-positive coefficients.  
Any root system $\Phi$ has a basis.  If a basis $\Psi\in \Phi$ is
fixed, we get a partition $\Phi = \Phi^+\cup \Phi^-$, where $\Phi^+$ 
is the system of positive roots and $\Phi^-$ is  the system of negative ones. 
Here a root $x$ is called {\it positive} (resp. {\it negative}) with 
respect to $\Psi$, if it is a linear combination of vectors from $\Psi$ 
with all non-negative (resp. non-positive) coefficients. We denote by $\Theta$
the set of  {\it simple reflections}, that is reflections corresponding to elements in $\Psi$.

Let $G$ be a reflection group, $\Psi$  be a system of simple roots and $\Theta$ 
be the corresponding system of simple reflections. For any $x\in V$, we have a unique 
$\gamma_x\in V^*$ such that $\gamma_x(y)= (x,y)$, for all $y\in V$. Further, for any 
simple reflection $\sigma_x \in \Theta$, we define the corresponding 
{\em divided difference operator} $\partial_{\sigma_x}$ on the set of
holomorphic (or meromorphic, or rational or polynomial) functions  on $V$ via
$$
\partial_{\sigma_x}\cdot  f := \frac{f - \sigma_x \cdot f }{\gamma_{x}}. 
$$  
For any $w\in G$, we set 
$\partial_w = \partial_{\sigma_1} \circ \cdots \circ \partial_{\sigma_p}$, 
where $w =  \sigma_1 \circ \cdots \circ \sigma_p $ is a reduced expression.  
By \cite[Page~5]{BGG}, we have $\partial_w =0$, if the expression $w =  \sigma_1 \circ \cdots \circ \sigma_p $ 
is not reduced. Moreover, the operator  $\partial_w$ is independent of the choice of a reduced expression.

\subsection{Rational Galois orders}\label{sec universal ring}\label{s4.2} 

Rational Galois orders is a large class of algebras introduced in 
\cite[Section~4]{Har}. This class includes, for instance, orthogonal 
Gelfand-Zeitlin algebras, finite W-algebras of type $A$ and, as we 
will see in Section~\ref{sec Structure theorem for standard algebras}, 
standard algebras of type $\mathbb A$ that preserve the vector space $\mathfrak O^G$. 
Note that a particular case of rational Galois orders was considered earlier 
in \cite{Vi,Vi2}. In the terminology of \cite{Vi,Vi2}, 
these are finitely generated over $H^0(V, \mathcal O^G)$ 
subalgebras in the so-called {\it universal ring}.

Let $G$ be a reflection group in $V$ as in 
Subsection~\ref{sec reflection group, roots, div diff} 
(note that the definition of a rational Galois order was given in 
\cite{Har} for a more general case of a pseudo-reflection group or a complex reflection group $G$). 
Let $\chi:G\to \mathbb C^{\times}$ be a character. The space of relative invariants 
$$
H^0(V,\mathcal O)^G_{\chi} := \{ f\in H^0(V,\mathcal O) \,\,| \,\, g\cdot f 
= \chi(g) f\,\,\, \text{for all}\, \,\, g\in G \}
$$
is, naturally, an  $H^0(V,\mathcal O)^G$-module. This module is free of rank
$1$ and is generated by 
$$
d_{\chi} = \prod_{H\in A(G)} (\gamma_H)^{a_H},
$$
where $A(G)$ is the set of all hyperplanes $H$ that are fixed by a 
certain element $\sigma_H$ in $G$, $\gamma_H\in V^*$ with 
$\ker \gamma_H = H$ and $a_H$ is the minimal non-negative integer 
such that $\chi(\sigma_H) = \det (\sigma_H^*)^{a_H}$. If $G$ is  
a reflection group, then $a_H=0$ or $1$, see \cite[Section~2]{Ter} for details.  

\begin{definition} \cite[Definition~4.3]{Har}\label{def rational Galois order} 
A {\em rational Galois order} is a subalgebra $\mathcal R$ in $\mathcal S(V)^G$ 
that contains $H^0(V,\mathcal O^G)$ and that is generated by a finite number of 
elements $X\in \mathcal S(V)^G$ such that, for any such $X$, there exists a 
character $\chi$ of $G$ such that $d_{\chi} X$ is holomorphic in $V$.  
\end{definition}

In \cite[Theorem 4.2]{Har} it was shown that a rational Galois order preserves 
$H^0(V,\mathcal O^G)$. In the following Lemma we prove a more general result:  a rational Galois order preserves the vector space $\mathfrak O^G$.

\begin{lemma}\label{lem rational Gal order preserve O^G}
	Let $X$ be a generator of a rational Galois order. Then $X(\mathfrak O^G) \subset \mathfrak O^G$.  
\end{lemma}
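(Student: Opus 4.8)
The plan is to reduce the statement to a pointwise holomorphy check and then cancel the apparent poles using the relative invariance of $d_\chi$. Fix a character $\chi$ of $G$ with $d_\chi X$ holomorphic on $V$, and write $X=\sum_i f_i\phi_{\xi_i}\in\mathcal S(V)^G$; then each $h_i:=d_\chi f_i$ is holomorphic. Let $F\in\mathcal O^G_{\bar x}$ and choose a $G$-invariant holomorphic representative, still denoted $F$, on a $G$-invariant neighbourhood of the orbit $G\cdot x$. Since each $F\circ\xi_i^{-1}$ is holomorphic, the element $N:=\sum_i h_i\,(F\circ\xi_i^{-1})$ is holomorphic, and $X(F)=d_\chi^{-1}N$. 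Thus $X(F)$ is meromorphic with poles confined to the zero locus of $d_\chi=\prod_{a_H=1}\gamma_H$, a union of reflection hyperplanes each occurring with multiplicity one (recall $a_H\in\{0,1\}$ for a reflection group). The whole problem is therefore to prove that $d_\chi$ divides $N$.

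First I would record that $X(F)$ is $G$-invariant. This is a formal consequence of the $G$-equivariance of the module action of $\mathcal S$ on $\mathfrak M$, namely $g\cdot\big(X(F)\big)=(g\cdot X)(g\cdot F)=X(F)$ for all $g\in G$, using $g\cdot X=X$ and $g\cdot F=F$; this identity is checked directly from the definitions of the $G$-actions in Subsection~\ref{s2.1}. Now fix a hyperplane $H$ occurring in $d_\chi$, so that $a_H=1$ and hence $\chi(\sigma_H)=\det(\sigma_H^{*})^{a_H}=-1$; consequently $\sigma_H\cdot d_\chi=\chi(\sigma_H)d_\chi=-d_\chi$. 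Applying $\sigma_H$ to $N=d_\chi\,X(F)$ and using $\sigma_H\cdot X(F)=X(F)$ gives $\sigma_H\cdot N=(\sigma_H\cdot d_\chi)\,(\sigma_H\cdot X(F))=-N$. Hence $N$ is anti-invariant under the reflection $\sigma_H$, and so it vanishes identically on the fixed hyperplane $H=\{\gamma_H=0\}$.

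It remains to upgrade this vanishing to divisibility. Working in the local ring of germs at any point $\eta$ of the support of $X(F)$ (a regular local ring, hence a UFD), each $\gamma_H$ with $\eta\in H$ is prime, and vanishing of the holomorphic germ $N$ along the smooth hypersurface $H$ yields $\gamma_H\mid N$. As the linear forms $\gamma_H$ are pairwise non-associate, their product $d_\chi$ divides $N$, so $X(F)=N/d_\chi$ is holomorphic at $\eta$; at points with $d_\chi(\eta)\neq 0$ holomorphy is automatic. Since this argument is entirely local and the support of $X(F)$ is contained in finitely many orbits, we conclude that $X(F)$ is a collection of holomorphic germs, $G$-invariant by the second paragraph, and therefore lies in $\mathfrak O^G$. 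The main obstacle is precisely this pole cancellation: the point is that $d_\chi$ is built from exactly those hyperplanes where $\chi(\sigma_H)=-1$, which is the condition that forces $N$ to change sign under $\sigma_H$ and hence to vanish along $H$; the global version for $H^0(V,\mathcal O^G)$ is \cite[Theorem~4.2]{Har}, and the argument above is its local refinement.
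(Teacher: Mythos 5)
Your proof is correct, and it follows the same overall scaffolding as the paper's: both localize at a point $\eta$ in the support of $X(F)$, clear denominators by multiplying with $d_\chi$ so that $N=d_\chi\,X(F)$ is holomorphic, and then cancel the poles by showing $d_\chi$ divides $N$ in the local ring. The difference lies in how the divisibility is obtained. The paper observes that $P'_\eta=d_\eta\,X(F)_\eta$ (with $d_\eta$ the product of the factors $\gamma_H$ of $d_\chi$ vanishing at $\eta$) is a relative invariant for the character $\chi_\eta(h)=(h\cdot d_\eta)/d_\eta$ of the local reflection subgroup $G_\eta$, and then cites the theory of relative invariants \cite[Section~2]{Ter} — freeness of the module of $\chi_\eta$-relative invariants over the invariants, generated by $d_{\chi_\eta}=d_\eta$ — to conclude $d_\eta\mid P'_\eta$. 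You instead prove the needed divisibility from scratch, one reflection at a time: since $a_H\in\{0,1\}$ forces $\chi(\sigma_H)=-1$ for each hyperplane occurring in $d_\chi$, the germ $N_\eta$ is $\sigma_H$-anti-invariant whenever $\eta\in H$, hence vanishes along $H$, and the UFD property of $\mathcal O_\eta$ together with the pairwise non-associate linear forms $\gamma_H$ yields $d_\chi\mid N_\eta$. Your route is more elementary and self-contained (it is in effect a direct proof of the special case of Stanley--Terao freeness that is actually used), which is adequate here because the paper's Section~4 assumes $G$ is a real reflection group; the paper's citation-based argument has the advantage of extending verbatim to the pseudo-reflection (complex reflection) setting of \cite{Har}, where $a_H$ may exceed $1$ and the simple sign/anti-invariance argument no longer suffices. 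Your preliminary reductions — $G$-invariance of $X(F)$ via equivariance, and holomorphy of each $d_\chi f_i$ — are implicit in the paper and correctly supplied.
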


\begin{proof}
Let $\chi$ be a character of $G$ such that $d_{\chi} X$ is holomorphic in $V$. 
We take $F_{G\cdot \xi}\in \mathcal O^G_{G\cdot \xi}$ and consider a germ $P_{\eta}$ of $P= X(F_{G\cdot \xi})$ 
at a point $\eta\in V$. Denote by $d_{\eta}$ the product of all divisors $\gamma_{H}$ 
of $d_{\chi}$ such that $\gamma_{H}(\eta) =0$. The corresponding reflections $\sigma_H$ 
generate the group $G_{\eta}$.   Then $P_{\eta} = P'_{\eta}/\chi_{\eta}$, where 
$P'_{\eta}$ is a holomorphic function at $\eta$. We see that $P'_{\eta}$ is a relative 
invariant for the character $\chi_{\eta}$, where $\chi_{\eta}(h) = (h\cdot d_{\eta})/ d_{\eta}$, 
$h\in G_{\eta}$. By  \cite[Section~2]{Ter}, we have $P'_{\eta} = d_{\eta} P''_{\eta} $, 
where  $P''_{\eta}$ is holomorphic at $\eta$. Therefore, $P_{\eta}$ is also holomorphic at $\eta$. 
\end{proof}

Here is an example. 

\begin{example} 
{\rm 
Assume that we are in the setup of Subsection~\ref{sec Gelfand-Zeitlin operators}. 
Let $n\geq 4$ and consider for example the classical Gelfand-Zeitlin 
operator $E_{34}$. We will now show explicitly that $E_{34} (F)$ is holomorphic, 
where $F = \sum\limits_{g\in G}g \cdot (f_{\xi^1_3}) \in 
\bigoplus\limits_{g\in G} \mathcal O^G_{g \cdot\xi^1_3}$.  We compute, 
for example, the germ of $E_{34}(F)$ at the point $\eta:=\xi^1_3+ \xi'$, 
where $\xi'=(\delta_{32}^{ki})$.   We have  
\begin{align*}
E_{34}(F)_{\eta} =  \frac{\prod\limits_{j= 1}^{4} (v_{31}- v_{4j})}
{ (v_{31}- v_{32})(v_{31}- v_{33})} f_{\xi^1_3} \circ (\xi')^{-1} +
\frac{\prod\limits_{j= 1}^{4} (v_{32}- v_{4j})}{ (v_{32}- v_{31})
(v_{32}- v_{33})} f_{\xi'} \circ (\xi^1_3)^{-1} =\\
\frac{(v_{31}- v_{33})\prod\limits_{j= 1}^{4} (v_{32}- v_{4j})f_{\xi'} 
\circ (\xi^1_3)^{-1} - (v_{32}- v_{33})\prod\limits_{j= 1}^{4} (v_{31}- v_{4j}) 
f_{\xi^1_3} \circ (\xi')^{-1}}{(v_{32}- v_{33}) (v_{32}- v_{31})(v_{31}- v_{33})} . 
\end{align*}
We see that the polynomial in the numerator changes the sign, if we 
permute $v_{32}$ and $v_{31}$. Therefore the factor $v_{32}- v_{31}$ 
cancels and the fraction is a holomorphic function at $\eta$. Another 
important observation here is that we have to consider the holomorphic 
category instead of the algebraic one. Indeed, the rational operator $E_{34}$ 
sends a polynomial germ $F$ to the holomorphic germ $E_{34}(F)_{\eta}$ 
plus other holomorphic summands. 
}
\end{example}

Representation theory of rational Galois orders  was developed in \cite{FuZ}. 
In this paper, we generalize some of the constructions  from \cite{FuZ} for any finite 
group, see Section~\ref{sec Applications of Theorem}.

\subsection{Bases in some modules over rational Galois orders} \label{s4.3} 

Assume that there is  a $G$-invariant  neighborhood $U$ of $e\in V$ such 
that $G$ acts as a reflection group in $U$. In this case, we will call 
$G$ a {\it local reflection group}. An example of this situation is $G=S_n$ 
and $V\simeq \mathbb C^n$, where $S_n$
acts via its permutation representation. 
Another example 
is $G=S_n$ and $V= \mathbb C^n/\mathbb Z^n$. More generally, 
$G$ is a generalized Weyl group acting on $\mathbb C^n$ and
$V= \mathbb C^n/\gimel'$, where $\gimel'$ is a $G$-invariant discrete 
lattice in $\mathbb C^n$.  

In this subsection we will describe the finite dimensional vector spaces 
$\mathbb E^*_{\bar\xi}$ using divided difference operators. 
If $G$ is a local reflection group, by Chevalley-Shephard-Todd Theorem, 
the factor space $\mathcal O_e/ \mathcal J_e$ is finite dimensional and 
has dimension $|G|$. Denote by $\Delta(\Psi)$ the 
product of all $\alpha_{x}$, where $x\in \Phi^+$. For any $g\in G$, we put
$\mathcal P_g:= \partial_{g^{-1} w_0} \Delta(\Psi)$. The obtained 
polynomials are called {\it Schubert polynomials} and their images in 
$\mathcal O_e/ \mathcal J_e$ form there a basis. Note 
that $\mathcal P_{w}(e)=0$ if $w\ne e$ and $\mathcal P_{e}$ is a 
non-zero constant.  Now we can easily construct the dual basis. Consider
\begin{equation}\label{eq basis of O/J}
B(\Theta) := \langle ev_e \circ \partial_{w} \,|\,\,  w\in G\rangle,
\end{equation}
where $ev_e$ is the evaluation at $e\in V$. 
To show that $B(\Theta)$ is a basis of $(\mathcal O_e/ \mathcal J_e)^*$, we note that 
$ev_e \circ \partial_{w} (\mathcal P_g)$ is $0$, if and only if $g\ne w$.  
If $\Theta'$ is another system of simple reflections in $G$ and $\rho(\Theta) =\Theta'$, 
then  
$$
B(\Theta')  =\langle ev_e \circ \rho \circ\partial_{w} \circ \rho^{-1}\,|\,\,  w\in G \rangle 
$$ 
is another basis of $(\mathcal O_e/ \mathcal J_e)^*$. We note also that a basis 
of 
$$
(\mathcal O^{G_{\xi}}_e /\mathcal O^{G_{\xi}}_e \cap \mathcal J_e)^* 
\subset  (\mathcal O_e/ \mathcal J_e)^*
$$
is given by $\langle ev_e \circ \partial_{w}\, | \, w\in (G/G_{\xi})^{short}\rangle$, 
where $(G/G_{\xi})^{short}$ denotes the set of shortest coset representatives.

Assume that $\Theta$ is fixed. In any class $\bar{\xi}\in V/G$, we can choose a 
representative $\tilde\xi$ such that $G_{\tilde\xi}$ is parabolic with respect 
to $\Theta$. A description of the basis in $(\mathbb E^G_{\bar{\xi}})^*$
corresponding to  $B(\Theta)$ is given in the following straightforward statement:

\begin{lemma}\label{lemma basis roots}
Let $\Theta$ be a system of simple roots, $\tilde\xi$ be as above and 
$B(\Theta)$ be the corresponding basis of $(\mathcal O_e/ \mathcal J_e)^*$. 
Then $\{ ev_e \circ \partial_{w} \circ \phi_{\tilde\xi}, \,\, w\in (G/G_{\tilde\xi})^{short} \}$
is a basis of $(\mathbb E^G_{\bar{\xi}})^*$. 
\end{lemma}

We summarize the above results in the following theorem. 

\begin{theorem}\label{theor basis, reflection group} 
Let $G$ be a local  reflection group, $\Theta$ be a system of simple reflections and 
$\mathcal A$ be a subalgebra in the skew-ring $\mathcal S$ that preserves the vector space 
$\mathfrak O^{G}|_{(G \ltimes \gimel) \cdot v}$, for a subgroup $\gimel\subset V$. 
Then   
$$
\bigcup_{\tilde \xi\in \gimel/G}\{ ev_e \circ \partial_{w} \circ \phi_{\tilde\xi}, 
\,\, w\in (G/G_{\tilde\xi})^{short} \},
$$ 
is basis of the $\mathcal A$-module $M^*(G,(G \ltimes \gimel) \cdot v)$.
\end{theorem}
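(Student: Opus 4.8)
The plan is to assemble Theorem~\ref{theor basis, reflection group} directly from the pieces already established, so the proof is essentially an organizational one. First I would recall that by Theorem~\ref{theor action in the bundle} the action of $\mathcal A$ on $\mathfrak O^{G}|_{(G \ltimes \gimel) \cdot v/G}$ descends to an action on the fibration, and by Lemma~\ref{lem action on M^*} this dualizes to an action on $M^*(G,(G \ltimes \gimel) \cdot v)$; Corollary~\ref{cor M is a A-module} confirms that $M^*$ is indeed an $\mathcal A$-module. So the only task remaining is to produce an explicit vector-space basis of $M^*$, and the $\mathcal A$-module structure plays no role in the basis count itself.

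Next I would unwind the definition $M^*(G,(G \ltimes \gimel) \cdot v) = \bigoplus_{\bar\xi \in (G \ltimes \gimel) \cdot v/G} (\mathbb E^G_{\bar\xi})^*$ from \eqref{eq vector space sum E^*_xi_i}. Since $\mathcal A \subset \mathcal S(\gimel)$, the relevant orbit decomposes so that the index set $(G \ltimes \gimel)\cdot v / G$ is parametrized by $\gimel/G$ (choosing $v$ as basepoint), and for each class I select the representative $\tilde\xi$ with $G_{\tilde\xi}$ parabolic with respect to $\Theta$, exactly as set up just before Lemma~\ref{lemma basis roots}. The basis of $M^*$ is then simply the union over these representatives of the individual bases of the summands $(\mathbb E^G_{\bar\xi})^*$.

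The key input is Lemma~\ref{lemma basis roots}, which provides, for each fixed $\tilde\xi$, that $\{\, ev_e \circ \partial_{w} \circ \phi_{\tilde\xi} : w\in (G/G_{\tilde\xi})^{short}\,\}$ is a basis of $(\mathbb E^G_{\bar\xi})^*$. Taking the union over $\tilde\xi \in \gimel/G$ gives a basis of the direct sum, since a basis of a direct sum of vector spaces is the disjoint union of bases of the summands. The finiteness and the precise cardinality $|G|$ of each fibre come from the Chevalley--Shephard--Todd theorem as recorded in Subsection~\ref{s3.3}, which guarantees $\mathbb E^G_{\bar\xi}$ is finite dimensional and hence that the dual pairing used to verify linear independence is well-behaved.

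I do not expect any serious obstacle here; the substantive work was done earlier, particularly in establishing that the Schubert polynomials $\mathcal P_g$ and the functionals $ev_e\circ\partial_w$ form dual bases of $\mathcal O_e/\mathcal J_e$ and its dual (the relation $ev_e \circ \partial_{w}(\mathcal P_g)=0 \iff g\neq w$ from Subsection~\ref{s4.3}), and in Lemma~\ref{lem raznesinie} which identifies $\mathbb E_\xi$ with $\phi_\xi(\mathcal O^{G_\xi}_e/(\mathcal O^{G_\xi}_e \cap \mathcal J_e))$. The one point requiring a line of care is that the $G$-invariance built into $\mathbb E^G_{\bar\xi}$ matches the restriction to shortest coset representatives $(G/G_{\tilde\xi})^{short}$, but this is precisely the content of the final remark of Subsection~\ref{s4.3} identifying the basis of $(\mathcal O^{G_\xi}_e/\mathcal O^{G_\xi}_e\cap\mathcal J_e)^*$ with $\langle ev_e\circ\partial_w : w\in(G/G_\xi)^{short}\rangle$. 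Thus the theorem follows by collecting Lemma~\ref{lemma basis roots} across all orbit representatives.
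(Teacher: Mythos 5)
Your proposal is correct and follows essentially the same route as the paper, whose proof is exactly the one-line assembly you describe: Corollary~\ref{cor M is a A-module} (via Theorem~\ref{theor action in the bundle} and Lemma~\ref{lem action on M^*}) gives the $\mathcal A$-module structure on $M^*(G,(G \ltimes \gimel) \cdot v)$, and Lemma~\ref{lemma basis roots} supplies the basis of each summand $(\mathbb E^G_{\bar\xi})^*$, with the union over orbit representatives giving a basis of the direct sum. Your extra care about choosing representatives $\tilde\xi$ with parabolic stabilizer and matching $G$-invariance with the shortest coset representatives is exactly the setup the paper establishes just before Lemma~\ref{lemma basis roots}, so nothing is missing.
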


\begin{proof}
The statement follows from Corollary~\ref{cor M is a A-module} and Lemmata~\ref{lem action on M^*} 
and \ref{lemma basis roots}. 
\end{proof}

For instance, we have Theorem \ref{theor basis, reflection group} for all rational Galois orders.

\section{Characterization of rational Galois orders}\label{sec Structure theorem for standard algebras}
\label{s5}

Let $V$ and $G$ be as in Subsections~\ref{sect def of OGZA} and \ref{sec Standard algebras of type A}. 
Denote by $(x_{ki})$ the standard dual basis in $V^*$, that is, $x_{ki} (v) = v_{ki}$, 
where $v=(v_{st})\in V$.

\begin{theorem}\label{theor structure of norm generated algebras} 
Let $A=\sum\limits_{s=1}^p\sum\limits_{g\in G}g\cdot(f_s\phi_{\xi^{a_s}_{i_s}})\in \mathcal S(V)^G$
and assume that $A$  preserves the vector space $\mathfrak O^G$. Then $A$ is a  generator 
of a rational Galois order $\mathcal D$ (cf. Definition~\ref{def rational Galois order}). 
\end{theorem}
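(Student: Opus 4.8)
We must show that an element $A=\sum_{s=1}^p\sum_{g\in G}g\cdot(f_s\phi_{\xi^{a_s}_{i_s}})$ in $\mathcal S(V)^G$ which preserves $\mathfrak O^G$ is a generator of a rational Galois order, i.e. (by Definition~\ref{def rational Galois order}) that there exists a character $\chi$ of $G$ such that $d_{\chi}A$ is holomorphic on $V$. The plan is to analyze $A$ one shift at a time. Since $A$ is a sum of standard elements, and the translation parts $\phi_{\xi^{a_s}_{i_s}}$ may be grouped by their underlying $G$-orbit of shifts, I would first reduce to understanding a single $G$-orbit summand $\sum_{g\in G}g\cdot(f_s\phi_{\xi^{a_s}_{i_s}})$, whose shift lands in a fixed coset. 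The coefficient functions $f_s$ are rational, and the whole content of the theorem is to control the \emph{denominators} of the $g\cdot f_s$ using the hypothesis that $A(\mathfrak O^G)\subseteq\mathfrak O^G$.

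\textbf{Extracting the pole structure.} The key step is to pin down where the $f_s$ can have poles. Because $G$ is (by the ambient setup of Subsections~\ref{sect def of OGZA}--\ref{sec Standard algebras of type A}) a product of symmetric groups acting by permuting the $v_{ki}$ within each row, its action on $V\cong\mathbb C^{\sum n_k}$ is that of a reflection group, with reflecting hyperplanes $\{v_{ki}=v_{kj}\}$ and associated linear forms $\gamma_H=x_{ki}-x_{kj}$. I would apply the hypothesis $A(\mathfrak O^G)\subseteq\mathfrak O^G$ to constant (or generic polynomial) $G$-invariant germs and examine the resulting germ of $A(F)$ at each point $\eta$, in the style of the computation in Theorem~\ref{theor action in the bundle} and the $E_{34}$ example. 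Holomorphy of $A(F)$ for all invariant $F$ forces the only admissible poles of the $f_s$ to lie along the reflecting hyperplanes $\gamma_H$, and moreover constrains the orders of those poles. The cleanest way to phrase this: after multiplying $A$ by a suitable product of the $\gamma_H$, the result must be holomorphic, and the minimal such product is exactly the generator $d_{\chi}$ of a space of relative invariants $H^0(V,\mathcal O)^G_{\chi}$ for an appropriate character $\chi$.

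\textbf{Identifying the character.} The decisive bookkeeping is to read off $\chi$ from the equivariance of $A$. Since $A$ is $G$-invariant and each $f_s$ transforms in a definite way under the stabilizer of its hyperplane, the denominator of $A$ is forced to be a relative invariant: writing the denominator as $\prod_H\gamma_H^{a_H}$, the invariance of $A$ together with the antisymmetry visible in the $E_{34}$ example (numerators vanish on the diagonals to exactly the order needed) shows that $\prod_H\gamma_H^{a_H}$ spans $H^0(V,\mathcal O)^G_\chi$ for the character $\chi(g)=\prod_H\det(\sigma_H^*)^{a_H}$, with each $a_H\in\{0,1\}$ as guaranteed by \cite[Section~2]{Ter} for a genuine reflection group. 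Then $d_\chi A$ is holomorphic, which is precisely the defining condition, and since $\mathcal S(V)^G$ contains $H^0(V,\mathcal O^G)$, the subalgebra generated by $A$ and the invariant polynomials is a rational Galois order $\mathcal D$.

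\textbf{The main obstacle.} I expect the hard part to be the local denominator analysis: proving that preservation of $\mathfrak O^G$ rules out poles along \emph{non-reflecting} loci and bounds the pole order along the reflecting hyperplanes by $1$. The subtlety — flagged in the $E_{34}$ example — is that $A(F)$ is only holomorphic \emph{after} the invariant-theoretic cancellation between the several terms $g\cdot(f_s\phi_{\xi_s})$ that collide at a given point $\eta$, so one cannot argue termwise; one must track the combined germ $A(F)_\eta$ as in \eqref{eq A(F)_eta} and use that it is holomorphic for \emph{every} invariant $F$, including non-constant ones, to separate the contributions. Lemma~\ref{lem raznesinie} and the integrality of $(\mathcal O^G_e)\subset\mathcal O_e$ from Theorem~\ref{cor dim of algebraic anf holom coinsiede} should provide the leverage needed to localize this argument hyperplane by hyperplane and then globalize.
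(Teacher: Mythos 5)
Your outline does track the paper's overall strategy (isolate one orbit-summand, then do a local analysis of pole orders, then conclude that $\Delta\cdot A$ is holomorphic), but two steps that you treat as routine are precisely where the content lies, and your proposal does not supply them. First, the reduction to a single summand $B_s=\sum_{g\in G}g\cdot(f_s\phi_{\xi^{a_s}_{i_s}})$ is not free: germs coming from different summands collide at the same points of $V$, so holomorphy of $A(F)$ does not imply holomorphy of each $B_s(F)$ termwise, and "grouping by coset" gives you nothing until you know each $B_s$ separately preserves $\mathfrak O^G$. The paper closes this with a concrete operator identity: multiplying by the $G$-invariant linear function $S_t=\sum_{g\in G}g\cdot x_{i_t,1}$ and using
\begin{equation*}
A\circ S_t\,id \;=\; S_t A-\frac{a_{i_t}|G|}{n_{i_t}}\,B_t,
\end{equation*}
i.e.\ an eigenvalue-separation argument exploiting that the translation $\phi_{\xi^{a_t}_{i_t}}$ shifts $S_t$ by a nonzero constant (the $a_s=0$ part is then a $G$-invariant multiplication operator and is handled directly). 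Your proposal has no substitute for this mechanism.

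Second, the denominator analysis that you explicitly defer as "the main obstacle" is deferred to the wrong tools: Lemma~\ref{lem raznesinie} and the integrality of $(\mathcal O^G)_e\subset\mathcal O_e$ play no role in the paper's proof of this theorem and cannot bound pole orders. What actually works is: (i) at points with trivial stabilizer only one term of $C(F)$, $C=\sum_{g\in G}g\cdot(f\phi^{a}_{\xi_i})$, is supported there, so taking $F$ constant gives holomorphy of each $g\cdot f$ on open Weyl chambers; (ii) at a generic point $w$ of a collision hyperplane $v_{ij}+a=v_{ip}$, where $G_w=\{id,\tau\}$, constant $F$ only yields holomorphy of the symmetrization $g\cdot f+\tau(g\cdot f)$, which in the Hartogs--Laurent expansion of $g\cdot f$ in $z_1=x_{ij}-x_{ip}$ kills the \emph{even} negative coefficients; the decisive extra input is the non-constant test germ $F=z_1$ (or $z_1+\theta(z_1)$ when $G_{h\cdot v}$ is nontrivial), which kills the odd coefficients below $-1$ and leaves at most a simple pole; (iii) $f\Delta$ is then holomorphic at all points with stabilizer of order $1$ or $2$, and the Riemann extension theorem removes the remaining singular locus, which has codimension at least $2$. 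Your remark that one must use non-constant invariant $F$ shows you sensed this, but without the Laurent-series bookkeeping and the codimension-$2$ globalization the argument is not there. Your concluding character discussion is also more elaborate than needed: the paper simply takes $\chi$ to be the sign character, so $d_\chi=\Delta$ and holomorphy of $f\Delta$ is exactly the defining condition of Definition~\ref{def rational Galois order}.
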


\begin{proof}
\noindent{\it Step 1.} 
We start by reducing the statement to the case $p=1$.  
For this, we show that $B_s:=\sum\limits_{g\in G}g\cdot(f_s\phi_{\xi^{a_s}_{i_s}})$ 
also preserves the vector space $\mathfrak O^G$, for any $s=1,\ldots, p$.  
Denote by $S_t$ the $G$-invariant polynomial 
$$
\sum\limits_{g\in G} g\cdot x_{i_t,1} =\frac{|G|}{n_{i_t}} \sum\limits_{j=1}^{n_{i_t}} x_{i_t,j},
$$ 
where $t\in \{ 1,\ldots, p\}$. Consider the operator $S_t id \in \mathcal S(V)^G$ 
and the following composition of operators
\begin{align*}
A \circ S_t id =  S_t \sum\limits_{s=1}^p\sum\limits_{g\in G}g\cdot (f_s\phi_{\xi^{a_s}_{i_s}}) 
- \frac{a_{i_t}|G|}{n_{i_t}}\sum\limits_{g\in G}g\cdot (f_t\phi_{\xi^{a_t}_{i_t}}) 
= S_t A - \frac{a_{i_t}|G|}{n_{i_t}}B_t.
\end{align*}
The operators $A \circ S_t id$, $S_t id$ and $S_t A$ all preserve 
$\mathfrak O^G$. Hence the element $B_t$ also preserves $\mathfrak O^G$, 
in case $a_t\ne 0$. 

Consider now the case $a_t=0$. Let us rewrite the operator $A$:
$$
A= \sum\limits_{a_s\ne 0}\sum\limits_{g\in G}g\cdot(f_s\phi_{\xi^{a_s}_{i_s}}) 
+ \sum\limits_{g\in G}g\cdot(f_s\phi_{\xi^{0}_{i_s}}) = 
\sum\limits_{a_s\ne 0}\sum\limits_{g\in G}g\cdot(f_s\phi_{\xi^{a_s}_{i_s}}) + H id,
$$
where $H$ is $G$-invariant. Since $A$ and the first summand  preserve 
 $\mathfrak O^G$, we deduce that $H id$ also preserves $\mathfrak O^G$. 

Therefore to prove our theorem it is enough to show that, if 
$C:=\sum\limits_{g\in G}g \cdot (f\phi_{\xi^{a}_{i}})$ preserves 
the vector space $\mathfrak O^G$, then $C\in \mathcal D$. 

\noindent{\it Step 2.} 
Assume that $C=\sum\limits_{g\in G}g \cdot (f\phi_{\xi^{a}_{i}})$ preserves 
the vector space $\mathfrak O^G$. Let us show that every function $g \cdot f$ is 
holomorphic in any Weyl chamber. In other words, we want to show that the 
function $g \cdot f$ is holomorphic at any point $w\in V$ such that 
$w= (w_{ki})$, where $w_{ki}\ne w_{kj}$, for any $k$ and $i\ne j$. 

First of all we note that, if $a=0$, then the operator $C$ is holomorphic 
at any point $v\in V$. Indeed, in this case $C= H id$, where 
$H$ is a $G$-invariant meromorphic function. Let us take 
$\sum\limits_{h\in G} h\cdot c\in \mathcal O^G_{\bar v}$, where $c\in \mathbb C\setminus\{0\}$. Then 
$$
C(\sum\limits_{h\in G} h\cdot c) = H\sum\limits_{h\in G} h\cdot c\in \mathcal O^G_{\bar v},
$$ 
where $\bar v = G\cdot v$. 
Therefore, $cH$ is holomorphic at any $h \cdot v$. Hence $H$ is holomorphic on $V$.

Assume now that $a\ne 0$. Let us take $\sum\limits_{h\in G} h\cdot F\in \mathcal O^G_{\bar v}$, 
where $F=e\cdot F\in \mathcal O^G_{v}$. Then $C(\sum\limits_{h\in G} h\cdot F)\in \mathfrak O^G$ 
is a sum of $G$-invariant germs supported at the points from the set  
$$
T=\{h\cdot v + g\cdot \xi_i^a\,\,|\,\, g,h\in G\}.
$$ 
Let us show that, from the fact that $h\cdot v + g\cdot \xi_i^a= h'\cdot v + g'\cdot \xi_i^a$ is 
a point in a Weyl chamber, it follows that $h\cdot v =h'\cdot v$ and 
$g\cdot \xi_i^a= g'\cdot \xi_i^a$. 
 
Take $w = (w_{kj}) = h\cdot v+ g\cdot \xi_i^a\in T$, a point from a Weyl chamber.  
Assume that there is $w'= (w'_{kj})= h'\cdot v+ g'\cdot \xi_i^a\in T$ such that $w'=w$.  
First of all, from $w=w'$, it follows that $w_{kj}=w'_{kj}$, for any $k\ne i$ 
and for any $j$. Further, we have two possibilities: $v_{ij} + a= v_{ip}+a$ 
or $v_{ij} + a= v_{ip}$, for some $p$. In the first case, we have $v_{ij}= v_{ip}$. 
Using that $w$ is in a Weyl chamber, we conclude that $h= id$ or $h$ is the 
transposition that sends $v_{ij}$ to $v_{ip}$. In particular, $h\cdot v = h'\cdot v$. 
Consider the case $v_{ij} + a= v_{ip}$, where $p\ne j$. In this case we have a contradiction 
with the assumption that $w$ is in a Weyl chamber. Summing up, we have $h\cdot v = h'\cdot v$, 
and hence $g\cdot \xi_i^a= g'\cdot \xi_i^a$.

Now consider the summand 
\begin{equation}\label{eq germ at x}
\sum\limits_{h_1\in G_{v}}(hh_1\cdot F) \circ (g\cdot \xi_i^a)^{-1} 
\sum\limits_{g_1\in G_{\xi_i^a}} (gg_1\cdot f) = \alpha [(h\cdot F)\circ 
(g\cdot \xi_i^a)^{-1}] (g\cdot f) \in \mathcal O^G_{w} ,
\end{equation}
where $\alpha \in \mathbb C\setminus \{0\}$, from $C(\sum\limits_{h\in G} g\cdot F)$, supported 
at the point $w=h\cdot v+ g\cdot \xi_i^a$ from a Weyl chamber. Note that, to obtain 
\eqref{eq germ at x}, we use the fact that $G_{F}= G_v$ and $G_{f}= G_{\xi_i^a}$. 
Further, putting $F=const\ne 0$, we see that $g\cdot f$ is holomorphic at $w$. 

\noindent{\it Step 3.} 
Our goal now is to show that $C\in \mathcal D$. Take 
$w = (w_{kj}) = h\cdot v+ g\cdot \xi_i^a\in T$ such that the stabilizer 
of $w$ has order $2$. We have two possibilities: 

\begin{itemize}
\item[(1)] $v_{ks} = v_{kt}$, for some $s\ne t$, $G_{w}=\{id,\sigma \}$, 
where $\sigma$ is the transposition that swaps the point $v_{ks}$ and $v_{kt}$;
\item[(2)] $v_{ij} +a = v_{ip}$, for some $j\ne p$, $G_{w}=\{id,\tau \}$, 
where $\tau$ is the transposition that swaps the point $v_{ij} +a$ and $v_{ip}$.
\end{itemize}

In the first case, as in Step~2, we get that $h\cdot f$ is holomorphic at $w$.
Consider the second possibility. The summand from $C(\sum\limits_{h\in G} h\cdot F)$ 
supported at the point $w$ is
\begin{equation}\label{eq germ holom at x2}
\begin{split}
\sum\limits_{h_1\in G_{x}}(hh_1\cdot F) \circ (g\cdot \xi_i^a)^{-1}\sum\limits_{g_1\in G_{\xi_i^a}} (gg_1\cdot f) +\\ \tau[\sum\limits_{h_1\in G_{v}}(hh_1\cdot F) \circ (g\cdot \xi_i^a)^{-1}\sum\limits_{g_1\in G_{\xi_i^a}} (gg_1\cdot f)] \in \mathcal O^G_{w}. 
\end{split}
\end{equation}
Let $F=c\in \mathbb C\setminus\{0\}$. From \eqref{eq germ holom at x2},  
we get that $g\cdot f + \tau(g\cdot f)\in \mathcal O^G_{w}$. We put
$z_1:=x_{ij}- x_{ip}$ and $z_2:=x_{ij}+x_{ip}$. Then $(z_1,z_2,x_{kt})$, 
where $(kt)\ne (ij), (ip)$, form a new coordinate system. Moreover, $z_2$ 
and $x_{kt}$ are $\tau$-invariant and $\tau(z_1)= - z_1$. 

From Step~$2$ it follows that $g\cdot f$ is a holomorphic function in 
a neighborhood of $w$, except for points $y$ with $z_1(y)=0$. Any such 
function possesses a Hartogs-Laurent series, see \cite[Section 8]{Sha}. 
Let this series be  $g\cdot f = \sum\limits_{s=q}^{\infty}H_s z_1^s$, where $H_s$ are 
holomorphic functions in $z_2$ and all $x_{kt}$. We have 
\begin{align*}
g\cdot f + \tau(g\cdot f) = \sum\limits_{s=q}^{\infty}(1 + (-1)^s)H_s z_1^s\in \mathcal O^G_{w}.
\end{align*}
We obtain that $H_s=0$, for all $s=2r<0$. 

Further, we note that $G_{h\cdot v}=\{id \}$ or $G_{h\cdot v} = \{id, \theta \}$, where $\theta$ is an 
involution that swaps $v_{ij}$ with some $v_{ij'}$, where $j'\ne p$. 
In the first case, set $h\cdot F=z_1\in  \mathcal O^{G_{h\cdot v}}_{h\cdot v}$. 
In the second case, set $h\cdot F=z_1 + \theta (z_1) \in  \mathcal O^{G_{h\cdot v}}_{h\cdot v}$. 
In both cases, using \eqref{eq germ holom at x2}, we obtain 
\begin{align*}
z_1(\sum\limits_{s=q}^{\infty}H_s z_1^s - \sum\limits_{s=q}^{\infty}(-1)^sH_s z_1^s) \in \mathcal O^G_{w}. 
\end{align*} 
This is possible only if $H_s=0$, for $s< 1$. Therefore $g\cdot f$ has only a simple pole at $w$. 

Denote by $\Delta$ the product of all $x_{ki}-x_{kj}$, where $i\ne j$. 
Summing up, above we proved that $f$ is holomorphic in any Weyl chamber and it 
has a simple pole or it is holomorphic at all points with the stabilizer 
of order $2$.  This implies that $f\Delta $ is holomorphic  at all point 
with the stabilizer of order $1$ or $2$. By the Riemann extension theorem, 
see e.g. \cite[Corollary~6.4]{Dem}, singularities of  codimension at least $2$ 
are removable.  It follows that $f\Delta= H$ is homomorphic in $V$.  
The proof is complete.
\end{proof}

\begin{corollary}\label{coroll subalgebras in univ ring D}
Let  $\mathcal A\subset \mathcal S(V)^G$ be a finitely generated over 
$H^0(V,\mathcal O^G)$  standard algebra of type $\mathbb A$ that 
preserves the vector space $\mathfrak O^G$. Then $\mathcal A$ is a rational Galois order. 
\end{corollary}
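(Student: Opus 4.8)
The plan is to reduce the corollary directly to Theorem~\ref{theor structure of norm generated algebras}, which is the substantive result, and to supply only the short bookkeeping that connects the hypotheses. By Definition~\ref{def standard algebra of type A}, $\mathcal A$ is generated by linear combinations of standard elements, so I may fix a finite generating set and write each generator as a $\mathbb{C}$-linear combination $A=\sum_s c_s A_s$ of standard elements $A_s=\sum_{g\in G} g\cdot(f_s\phi_{\xi^{a_s}_{i_s}})$. The first thing I would observe is that, since $\mathcal A$ is finitely generated over $H^0(V,\mathcal O^G)$ and $H^0(V,\mathcal O^G)\subset\mathcal A$ by hypothesis, it suffices to exhibit a finite generating set each of whose members satisfies the defining condition of a rational Galois order, namely that some $d_\chi X$ is holomorphic on $V$.

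Next I would note that every generator $A$ has exactly the form $A=\sum_{s=1}^{p}\sum_{g\in G} g\cdot(f_s\phi_{\xi^{a_s}_{i_s}})$ appearing in the statement of Theorem~\ref{theor structure of norm generated algebras} (absorbing the scalars $c_s$ into the $f_s$), and that by assumption $\mathcal A$ preserves $\mathfrak O^G$. Hence each generator $A$ preserves $\mathfrak O^G$, so Theorem~\ref{theor structure of norm generated algebras} applies verbatim and yields that $A$ is a generator of a rational Galois order $\mathcal D_A$; concretely, its proof produces, for each standard summand, a character $\chi$ with $d_\chi\cdot(g\cdot(f_s\phi_{\xi^{a_s}_{i_s}}))$ holomorphic on $V$, which is the required condition in Definition~\ref{def rational Galois order}. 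The one point demanding a sentence of care is that Theorem~\ref{theor structure of norm generated algebras} is stated for a single standard element of the given multi-index shape, so I must check that a \emph{finite linear combination} of standard elements still falls under it; this is immediate because the theorem's hypothesis is precisely on an $A$ of the displayed summed form, and Step~1 of its proof already isolates the individual summands $B_s$ and shows each preserves $\mathfrak O^G$ separately.

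Finally I would assemble the algebra. Let $X_1,\dots,X_m$ be the chosen finite generating set of $\mathcal A$ over $H^0(V,\mathcal O^G)$. By the previous paragraph each $X_j$ satisfies the rational-Galois-order generator condition, and $\mathcal A$ contains $H^0(V,\mathcal O^G)$; therefore $\mathcal A$ is, by Definition~\ref{def rational Galois order}, a subalgebra of $\mathcal S(V)^G$ containing $H^0(V,\mathcal O^G)$ and generated by finitely many elements each admitting a character $\chi$ with $d_\chi X_j$ holomorphic. This is exactly the definition of a rational Galois order, so $\mathcal A$ is one.

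I expect essentially no genuine obstacle here: the corollary is a packaging of Theorem~\ref{theor structure of norm generated algebras}, and the only thing to be vigilant about is the compatibility of indices and the passage from one generator to a finite generating family. The mild subtlety worth stating explicitly is that the condition ``$d_\chi X$ holomorphic'' in Definition~\ref{def rational Galois order} is imposed generator-by-generator, and that the character $\chi$ may differ for different generators; this causes no difficulty because the definition already allows a distinct character for each of the finitely many generators.
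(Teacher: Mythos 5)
Your proposal is correct and matches the paper's intent exactly: the paper states this corollary with no separate proof precisely because it is the routine reduction you carry out, namely writing each of the finitely many generators in the summed form of Theorem~\ref{theor structure of norm generated algebras} (absorbing scalars into the $f_s$), noting each preserves $\mathfrak O^G$ since $\mathcal A$ does, and invoking the theorem (whose proof yields $f\Delta$ holomorphic, i.e.\ $d_\chi X$ holomorphic for the sign character) generator by generator. Your explicit remarks --- that the character may vary over generators and that Step~1 of the theorem's proof already isolates the individual standard summands --- are exactly the right points of care, and nothing further is needed.
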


This description of standard algebras of type $\mathbb A$ 
that preserve  $\mathfrak O^G$ is surprising.  
It would be interesting to prove an analog of this result 
(or to find a counter-example) for other reflection groups.

\section{Applications of Theorem \ref{theor action in the bundle} to Gelfand-Zeitlin modules}
\label{sec Applications of Theorem} \label{s6}

Let $\mathcal A$ be a subalgebra in $\mathcal S(V)$ that preserves 
the vector space $\mathfrak O^G|_{(G \ltimes \gimel) \cdot v}$, for some $v\in V$, 
and $\mathcal B$ be the algebra of global $G$-invariant functions on $V$. 
Then, by Corollary \ref{cor M is a A-module} and Proposition~\ref{prop all modules are GZ},  
$M(G,(G \ltimes \gimel) \cdot v)$ and $M^*(G,(G \ltimes \gimel) \cdot v)$ 
are $\mathcal A$-modules. These $\mathcal A$-modules and 
their submodules were studied,  for some special cases, simultaneously 
and independently  in \cite{RZ} (the case of 
$\mathcal A= U(\mathfrak{gl}_n (\mathbb C))$) and in \cite{EMV}
(the case of $\mathcal A$ being an orthogonal Gelfand-Zeitlin algebra).
The case when $\mathcal A$ is a rational Galois 
orders corresponding to any reflection group was later considered in \cite{FuZ}. 
In this section, we show how to obtain \cite[Section 5.6, Theorem]{RZ}, \cite[Theorem~10]{EMV} 
and \cite[Theorem~7.4]{FuZ} using Corollary~\ref{cor M is a A-module}, 
Theorem~\ref{theor homomorphism of A-modules} and Proposition~\ref{prop all modules are GZ}.

\subsection{The case of orthogonal Gelfand-Zeitlin algebras} \label{s6.1}
 
Let $V$, $\gimel$ and $G$ be as in Subsection~\ref{sect def of OGZA}.
The classical Gelfand-Zeitlin operators $E_{st}$ and the generators of 
the orthogonal Gelfand-Zeitlin algebra $E_k$ and $F_k$  are rational, 
however as it was shown in Lemma~\ref{lem rational Gal order preserve O^G}, 
we have $E_{k} (\mathfrak O^G) \subset \mathfrak O^G$ and 
$F_{k} (\mathcal O^G) \subset \mathcal O^G$. Clearly, the same 
holds for $E_{st}$. Further let us take $v'\in V$. It is easy 
to see that there exists $v\in \gimel \cdot v'$ such that $G_v$ 
includes all stabilizers $G_w$, where $w\in \gimel \cdot v'$. 

\begin{lemma}\label{lem G_v is stan of the orbit}
We have $G_v =G_{\gimel \cdot v}$. 
\end{lemma}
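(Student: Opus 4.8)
The plan is to unwind both stabilizers into explicit conditions on the entries of $v=(v_{ki})$ and to exploit the special choice of the representative $v$ in its $\gimel$-orbit. First I would record the general identity $g\cdot(\gimel\cdot v)=\gimel\cdot(g\cdot v)$, valid for every $g\in G$: since $\gimel$ is $G$-invariant and $G$ acts linearly while $\gimel$ acts by translations, one has $g\cdot(\xi\cdot w)=(g\cdot\xi)\cdot(g\cdot w)$ inside $G\ltimes\gimel$, and $g\cdot\gimel=\gimel$. Consequently $g$ preserves the orbit $\gimel\cdot v$ if and only if $g\cdot v\in\gimel\cdot v$, so that
$$
G_{\gimel\cdot v}=\{\,g\in G \mid g\cdot v\in\gimel\cdot v\,\}.
$$
The inclusion $G_v\subseteq G_{\gimel\cdot v}$ is then immediate, and the whole content of the lemma is the reverse inclusion.

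Next I would describe the distinguished representative $v$ concretely. For each row $k$ I introduce the equivalence relation on $\{1,\dots,n_k\}$ given by $i\sim_k j$ if and only if $v_{ki}-v_{kj}\in\mathbb{Z}$; since $\gimel$ acts by integer translations, this relation depends only on the orbit $\gimel\cdot v$ and not on the chosen representative. By the action $(g\cdot v)_{ki}=v_{k g_k(i)}$, the stabilizer $G_w$ of any $w\in\gimel\cdot v$ consists of those $g=(g_1,\dots,g_n)$ for which each $g_k$ permutes only indices with equal $w$-entries, and such indices are necessarily $\sim_k$-equivalent. Hence every $G_w$ is contained in the Young subgroup attached to the partitions of each row into $\sim_k$-classes, and this maximal value is attained exactly when the entries $v_{ki}$ within a single $\sim_k$-class all coincide. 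The maximality property defining $v$ (namely that $G_v$ contains all $G_w$, $w\in\gimel\cdot v$) therefore forces $v$ to be constant on each $\sim_k$-class, since otherwise a suitable element of $\gimel$ would merge two distinct values in some row and enlarge the stabilizer.

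Finally, for $g\in G_{\gimel\cdot v}$ I would write $g\cdot v=v+\xi$ with $\xi\in\gimel$. Comparing entries gives $v_{k g_k(i)}-v_{ki}=\xi_{ki}\in\mathbb{Z}$, so $g_k(i)\sim_k i$ for all $i$ and $k$; by the constancy of $v$ on $\sim_k$-classes just established, $v_{k g_k(i)}=v_{ki}$, whence $\xi=0$ and $g\cdot v=v$, i.e. $g\in G_v$. The only genuinely delicate point I expect is the characterization of the distinguished representative: one must verify that ``maximal stabilizer in the orbit'' is equivalent to ``constant on all $\sim_k$-classes'', which uses that $\gimel$ is the full integer lattice, so that individual classes can indeed be collapsed by an element of $\gimel$. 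Everything else is routine bookkeeping with the permutation action.
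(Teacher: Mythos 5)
Your proof is correct and takes essentially the same route as the paper: the paper simply asserts that the distinguished representative $v$ satisfies $v_{ki}-v_{kj}\in\mathbb{Z}\Rightarrow v_{ki}=v_{kj}$ and then concludes, exactly as you do, that $g\cdot v-v\in\gimel$ forces $g\cdot v=v$, hence $g\in G_v$. Your additional verification that the maximal-stabilizer property of $v$ implies constancy on the integer-difference classes (by collapsing two integrally related but distinct entries with a lattice translation to produce a transposition in some $G_w\setminus G_v$) is a correct filling-in of a step the paper leaves implicit.
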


\begin{proof}
For $v=(v_{ki})$, the following holds: if $v_{ki}-v_{kj}\in \mathbb Z$, then 
$v_{ki} = v_{kj}$. Further, it is clear that $G_v \subset G_{\gimel \cdot v}$. 
If $g\in G_{\gimel \cdot v}$, then $g\cdot v\in \gimel \cdot v$ 
or, equivalently, $g\cdot v - v\in \gimel $. Hence 
$(g\cdot v)_{ki} - v_{ki} \in \mathbb Z$ and thus
$(g\cdot v)_{ki} = v_{ki}$, implying $g\in G_v$. 
\end{proof}

By Lemma~\ref{lem G_v is stan of the orbit} and Proposition~\ref{prop isom pr}, 
we get that $M(G_v,\gimel \cdot v)$ and $M^*(G_v,\gimel \cdot v)$ are 
$\mathcal A$-modules. From Proposition~\ref{prop all modules are GZ} 
it follows that these modules are Harish-Chandra modules and therefore
Gelfand-Zeitlin modules. This recovers the corresponding 
results from \cite{RZ} and \cite{EMV}.

\subsection{The case of rational Galois orders}\label{s6.2}

Let $V$, $\gimel$ and $G$ be as in Subsection~\ref{sec universal ring}. 
Take $v\in V$ and let $H$ be a subgroup in $G$ that contains 
all stabilizers $G_w$, where $w\in \gimel \cdot v$. Then it is easy 
to check (we refer to Theorem~\ref{theor structure} for details)  
that a rational Galois order $\mathcal A$ preserves the vector space 
$\mathfrak O^H|_{(H \ltimes \gimel) \cdot v}$. By 
Lemma~\ref{lem rational Gal order preserve O^G}, the algebra  
$\mathcal A$ preserves also the vector space $\mathfrak O^G|_{(G \ltimes \gimel) \cdot v}$. 
Therefore we may apply Theorem~\ref{theor homomorphism of A-modules} 
to obtain a family of the corresponding modules. 
In the case when $H$ is a reflection group and satisfies some other 
conditions (it has to be parabolic with respect to a fixed system of
simple roots), the $\mathcal A$-module $Im (\Upsilon^*)$, cf
Theorem~\ref{theor homomorphism of A-modules}, was 
constructed in \cite[Theorem~7.4]{FuZ}. This recovers the
corresponding result of \cite{FuZ}.

\section{Structure theorem for rational Galois order}\label{s7}

\subsection{Further examples of algebras that preserve the vector space $\mathfrak O^G$}\label{section generalization of OGZ}\label{s7.1}

In this section we assume that  $G$ is a reflection group on 
$V\simeq\mathbb C^n$. Let us fix a system $\Psi$ of simple roots and 
let $\Theta$ be the set of the corresponding simple reflections. 
Our goal now is to define two classes of algebras preserving the 
vector space $\mathfrak O^G$. As above, we denote by $\circ$ composition 
of operators or the product in $G \ltimes V$ and we use $\cdot $ 
to denote the action of $G$. For example, if $g\in G$ and $\xi\in V$, 
then $g \cdot \xi = g \circ \xi \circ g^{-1}$ and 
$g \cdot \phi_{\xi} = g \circ \phi_{\xi} \circ g^{-1}$.

{\it Algebras of type $I$.} These are subalgebras 
of $\mathcal S$ generated by  elements of the form 
$\sum\limits_i\partial_{w_i} \circ p_i\phi_{v_i}$, 
where, for each $i$, the stabilizer  $G_{v_i}$ of $v_i\in V$ 
is parabolic with respect to $\Theta$, the function $p_i$ is 
$G_{v_i}$-invariant and holomorphic (or meromorphic, or 
rational or polynomial) and $w_i$ is the longest element in
$(G/G_{v_i})^{short}$.

{\it Algebras of type $II$.} These are subalgebras of 
$\mathcal S$ generated by  elements in the form 
$\sum\limits_i\partial_{w_i} \cdot p_i\phi_{v_i}$, where $v_i$,  
$p_i$ and $w_i$ are as in type I (note the difference of using $\cdot$
in type $II$ instead of $\circ$ in type $I$).

Let $\mathbf A$ be an algebra of type $I$.  
Denote by $\gimel$ the subgroup of $V$ generated by all 
possible $g\cdot v_i$, where $g\in G$ and $v_i$ appears in 
a generator of $\mathbf{A}$, see above.  

\begin{proposition}\label{prop invariance}
Let $E=\sum\limits_i \partial_{w_i} \circ p_i\phi_{v_i}$ 
be a generator of the algebra $\mathbf A$. If all $p_i$ are holomorphic in $V$, then 
$$
E (\mathfrak O^G) \subset \mathfrak O^G.
$$
\end{proposition}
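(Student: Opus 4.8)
The plan is to reduce the statement to the local computation already carried out in Theorem~\ref{theor action in the bundle} and Lemma~\ref{lem rational Gal order preserve O^G}, by showing that the operator $E=\sum_i \partial_{w_i}\circ p_i\phi_{v_i}$ is in fact an element of $\mathcal S(V)^G$ and, moreover, a generator of a rational Galois order. Once that identification is made, invariance of $\mathfrak O^G$ follows from Lemma~\ref{lem rational Gal order preserve O^G}. So the first task is to rewrite each summand $\partial_{w_i}\circ p_i\phi_{v_i}$ as an honest element of the skew-group ring, i.e. in the normal form $\sum_{g} h_g\,\phi_{\eta_g}$ with $h_g$ meromorphic (rational) functions on $V$. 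Here I would use that, for a reduced expression $w_i=\sigma_1\circ\cdots\circ\sigma_p$, the divided difference $\partial_{w_i}$ is a composition of the elementary operators $\partial_{\sigma}f=(f-\sigma\cdot f)/\gamma$, each of which is already a $\mathbb C$-linear combination of $\mathrm{id}$ and a reflection with coefficients in $\mathcal M^G$; expanding the composition expresses $\partial_{w_i}$ as $\sum_{w\le w_i}c_w\,w$ for suitable meromorphic coefficients $c_w$, so that $\partial_{w_i}\circ p_i\phi_{v_i}$ genuinely lies in $\mathcal S(V)$.

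The second, and conceptually central, step is to verify $G$-invariance, i.e. $E\in\mathcal S(V)^G$. The role of choosing $w_i$ to be the \emph{longest} element in $(G/G_{v_i})^{short}$, together with $G_{v_i}$ parabolic and $p_i$ being $G_{v_i}$-invariant, is precisely to guarantee that the operator $\partial_{w_i}\circ p_i\phi_{v_i}$ does not depend on the choice of coset representative and is symmetric under the full $G$-action. Concretely I would show that, because $p_i$ is $G_{v_i}$-invariant, the sum $\sum_{g\in G}g\cdot(\text{something})$ that one expects from a standard element is reproduced by $\partial_{w_i}$: the divided difference $\partial_{w_i}$ for $w_i$ the longest short coset representative acts like an averaging/antisymmetrization over $G/G_{v_i}$, and combined with the $G_{v_i}$-invariance of $p_i$ this yields a genuinely $G$-invariant operator. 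This is where I expect the main obstacle to be, since one must track carefully how $\partial_{w_i}$ interacts with the translation $\phi_{v_i}$ and with the parabolic stabilizer, using the identities from Subsection~\ref{sec reflection group, roots, div diff} (independence of the reduced expression, and $\partial_w=0$ on non-reduced expressions).

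Granting $E\in\mathcal S(V)^G$, the final step is to produce a character $\chi$ of $G$ with $d_\chi E$ holomorphic on $V$, which is exactly Definition~\ref{def rational Galois order}. Since each $p_i$ is assumed holomorphic on $V$, the only poles of $E$ come from the denominators $\gamma_x$ introduced by the divided difference operators; these are precisely the linear forms $\gamma_H$ attached to reflection hyperplanes $H\in A(G)$. Thus multiplying by a suitable product of the $\gamma_H$ — equivalently, by $d_\chi$ for the appropriate $\chi$ — clears all denominators and makes $d_\chi E$ holomorphic, so $E$ is a generator of a rational Galois order $\mathcal R$. At that point Lemma~\ref{lem rational Gal order preserve O^G} applies verbatim and gives $E(\mathfrak O^G)\subset\mathfrak O^G$, completing the proof. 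I would expect the bulk of the write-up to be the bookkeeping in the second step; the first and third steps are essentially formal once the normal form is available.
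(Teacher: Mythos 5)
Your plan has a genuine gap at its central step, together with a structural error in its framing. The expansion of $\partial_{w_i}\circ p_i\phi_{v_i}$ does \emph{not} lie in $\mathcal S(V)$: writing $\partial_{w_i}=\sum_u c_u\,u$ and commuting past $p_i\phi_{v_i}$ produces terms of the form $c_u\,(u\cdot p_i)\,\phi_{u\cdot v_i}\circ u$ with genuine reflection factors $u\in G$, and elements of $G$ are not translations $\phi_\xi$ with $\xi\in V$. So $E$ is not literally an element of $\mathcal S(V)^G$ and cannot itself be ``a generator of a rational Galois order'' in the sense of Definition~\ref{def rational Galois order}; at best its restriction to $\mathfrak M^G$ agrees with the restriction of some $X\in\mathcal S(V)^G$ (on invariant germs one may drop the factors $u$, but the $G$-invariance of the resulting $X$ is then exactly what must be proved --- compare Lemma~\ref{lem A=B}, where the authors handle precisely this $\circ$ versus $\cdot$ discrepancy). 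Moreover, your step 2, which you flag as the main obstacle but support only with an ``antisymmetrization'' heuristic, is precisely part (a) of the Structure Theorem~\ref{theor structure}: $\sum_{\tau\in G}\tau\cdot\frac{\Delta'}{\Delta}p\,\phi_v\big|_{\mathfrak O^G}=a\,\partial_w\circ p\phi_v\big|_{\mathfrak O^G}$, whose proof needs the nontrivial identity $\partial_{w_0}=\sum_{\tau\in G}\tau\cdot\frac1\Delta$ (Hiller, plus Dedekind's theorem). Without that input your argument does not close; with it, the route is viable and not circular (the proof of Theorem~\ref{theor structure} does not use Proposition~\ref{prop invariance}), since $X=\sum_\tau\tau\cdot\frac{\Delta'}{\Delta}p\,\phi_v$ with $\chi=\det$ and $d_\chi=\Delta$ is then a bona fide rational Galois order generator to which Lemma~\ref{lem rational Gal order preserve O^G} applies. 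Note also that your step 3 silently requires the poles of the coefficients $c_u$ to be at most simple along each reflection hyperplane: for a reflection group $a_H\le 1$, so $d_\chi$ cannot clear higher-order poles; this is true but must be checked.

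The paper's own proof is much more elementary and bypasses the Galois-order machinery entirely. For invariance of the image it uses, for each simple reflection $\tau$, the identity $(\mathrm{id}-\tau)\circ\partial_{w_i}\circ p_i\phi_{v_i}=\gamma_\tau\,\partial_\tau\circ\partial_{w_i}\circ p_i\phi_{v_i}$; since $w_i$ is the longest element of $(G/G_{v_i})^{short}$, the composition $\partial_\tau\circ\partial_{w_i}$ is either zero or factors as $\partial_u\circ\partial_s$ with $s\in G_{v_i}$, and $\partial_s$ annihilates $p_i\phi_{v_i}(F)$ for $F\in\mathfrak M^G$ because that germ is $G_{v_i}$-invariant ($p_i$ is $G_{v_i}$-invariant and $\phi_{v_i}$ commutes with $G_{v_i}$). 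Hence $E(F)$ is fixed by every simple reflection and lies in $\mathfrak M^G$. Holomorphy is then a one-line local check that each $\partial_\tau$ preserves $\mathfrak O$: at a point fixed by $\tau$ the linear form $\gamma_\tau$ divides $f-\tau\cdot f$, and elsewhere $\gamma_\tau$ is a unit; since the $p_i$ are holomorphic, $E(\mathfrak O^G)\subset\mathfrak O^G$ follows. If you want to salvage your approach, you must first prove the identification of Theorem~\ref{theor structure}(a); otherwise the paper's direct two-step argument is both shorter and self-contained.
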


\begin{proof}
Take a simple reflection $\tau\in\Theta$. Then
\begin{equation}\label{eqnn124}
(id -\tau) \circ \partial_{w_i} \circ p_i\phi_{v_i} = \gamma_{\tau} \partial_{\tau} \circ\partial_{w_i} \circ p_i\phi_{v_i}.  
\end{equation}
Since $w_i$ is the longest element in $(G/G_{v_i})^{short}$, the operator
$\partial_{\tau}\circ\partial_{w_i}$ is either zero or can be written
as $\partial_{u}\circ \partial_{s}$, where $\partial_s\in G_{v_i}$.
Therefore the right hand side of \eqref{eqnn124} 
is identically zero on $\mathfrak{M}^G$. 
Hence, for any  $F\in \mathfrak{M}^G$ and $g\in G$, we have 
$g\circ \partial_{w_i} \circ p_i\phi_{v_i} (F)= \partial_{w_i} 
\circ p_i\phi_{v_i}(F)$ implying 
$\partial_{w_i} \circ p_i\phi_{v_i}(\mathfrak M^G ) \subset \mathfrak M^G$.  

Further, we have $\partial_{\tau}(\mathfrak O)\subset \mathfrak O$. Indeed, let us take $f_x\in \mathcal O_x$ and consider $\partial_{\tau}(f_x)$. If $\tau(x)=x$, then $\gamma_{\tau}(x)=0$. In this case $\gamma_{\tau}$ is a divisor of $f_x - \tau(f_x)\in \mathcal O_x$. Therefore, $\partial_{\tau}(f_x)\in \mathcal O_x$. If $\tau(x)\ne x$, then $\gamma_{\tau}(x)\ne 0$. Hence $f_x/\gamma_{\tau} \in \mathcal O_x$ and $\tau(f_x)/\gamma_{\tau} \in \mathcal O_{\tau(x)}$.
\end{proof}

Let $\mathbf A$ be an algebra of type $I$ and $\mathbf B$
be an algebra of type $II$. Assume that for each generator $E=\sum\limits_i \partial_{w_i} \circ p_i\phi_{v_i}$ of $\mathbf A$ there
is a generator $E'=\sum\limits_i \partial_{w_i} \cdot p_i\phi_{v_i}$ of $\mathbf B$ and vice versa.
 The next lemma describes 
when the actions of $E$ and $E'$ coincide.

\begin{lemma}\label{lem A=B} Assume that all $p_i$ are holomorphic. 
We have the equality of operators 
$$
E|_{\mathfrak O^G} = E'|_{\mathfrak O^G}.
$$
Therefore, the actions of algebras $\mathbf A$ and $\mathbf B$ as above on 
$\mathfrak O^G$ coincide. 
\end{lemma}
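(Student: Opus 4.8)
The plan is to reduce the equality of operators to a single identity in the skew-group ring and then propagate it along a reduced expression by induction on length. Since both $E$ and $E'$ are sums over $i$ of terms built from the same data $(w_i,p_i,v_i)$, it suffices to prove, for a single operator $X:=p_i\phi_{v_i}\in\mathcal S$ and every $G$-invariant germ $F\in\mathfrak O^G$, the termwise equality $(\partial_{w_i}\circ X)(F)=(\partial_{w_i}\cdot X)(F)$. Here the left-hand side uses composition of operators, so that $(\partial_{w}\circ X)(F)=\partial_{w}(X(F))$, while the right-hand side uses the $G$-action on $\mathcal S$, built up from $\partial_\sigma\cdot X=\tfrac{1}{\gamma_\sigma}(X-\sigma\cdot X)$ with $\sigma\cdot X=\sigma\circ X\circ\sigma^{-1}$.

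First I would isolate the single-reflection identity. For a simple reflection $\sigma$ and any $Y\in\mathcal S$, one computes for a $\sigma$-invariant function $F$ that $(\sigma\cdot Y)(F)=(\sigma\circ Y\circ\sigma^{-1})(F)=\sigma(Y(F))$, using $\sigma^{-1}\cdot F=F$. Hence
\begin{align*}
(\partial_\sigma\cdot Y)(F)=\tfrac{1}{\gamma_\sigma}\bigl(Y(F)-\sigma\cdot(Y(F))\bigr)=\partial_\sigma\bigl(Y(F)\bigr),
\end{align*}
where on the right $\partial_\sigma$ denotes the divided difference operator applied to the function $Y(F)$. The crucial point is that this identity requires invariance only of the argument $F$, not of $Y(F)$.

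Next I would run the induction on the length of $w_i$, writing a reduced expression $w_i=\sigma w'$ with $\sigma$ simple and $\ell(w_i)=\ell(w')+1$. Setting $Y:=\partial_{w'}\cdot X$, the single-reflection identity (applicable since $F$ is $G$-invariant, hence $\sigma$-invariant) gives $(\partial_{w_i}\cdot X)(F)=\partial_\sigma(Y(F))$. By the induction hypothesis $Y(F)=(\partial_{w'}\cdot X)(F)=\partial_{w'}(X(F))$, and since $\sigma w'$ is reduced the composition rule $\partial_\sigma\circ\partial_{w'}=\partial_{w_i}$ from \cite{BGG} yields $(\partial_{w_i}\cdot X)(F)=\partial_{w_i}(X(F))=(\partial_{w_i}\circ X)(F)$, completing the induction with trivial base case $w_i=e$.

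Summing over $i$ gives $E|_{\mathfrak O^G}=E'|_{\mathfrak O^G}$. Because all $p_i$ are holomorphic, Proposition~\ref{prop invariance} guarantees that $E$, and therefore $E'$, maps $\mathfrak O^G$ into $\mathfrak O^G$; since the matched generators of $\mathbf A$ and $\mathbf B$ act identically on $\mathfrak O^G$ and preserve it, arbitrary products of generators agree on $\mathfrak O^G$ as well, so the two algebra actions coincide. I expect the main obstacle to be exactly the temptation to iterate the single-reflection identity naively: the intermediate germs $(\partial_{w'}\cdot X)(F)$ are in general not $G$-invariant, so the identity cannot be reapplied to them directly. The inductive formulation circumvents this by always feeding the original invariant $F$ into the outermost operator and invoking the hypothesis on the inner one.
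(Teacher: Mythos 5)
Your proof is correct and follows essentially the same route as the paper's: an induction on the length of a reduced expression for $w_i$, driven by the observation that, for a single simple reflection $\sigma$, the composition $\partial_{\sigma}\circ Y$ and the action $\partial_{\sigma}\cdot Y$ agree on $\sigma$-invariant arguments, with Proposition~\ref{prop invariance} supplying the passage from the operator identity to the coincidence of the algebra actions. The only difference is organizational: you peel the leftmost letter of the reduced word, so the inner operator $X=p_i\phi_{v_i}$ stays fixed and the meromorphic intermediate $Y=\partial_{w'}\cdot X$ is absorbed by your single-reflection identity for arbitrary $Y\in\mathcal S$, whereas the paper peels the rightmost letter, replacing $f\phi_{x}$ by $\partial_{\tau_k}\cdot f\phi_{x}$ and accordingly stating the inductive claim for arbitrary meromorphic coefficients on $\mathfrak M^G$ --- the two are mirror images of the same induction.
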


\begin{proof} 
Consider first the operators  $\partial_{\rho}\circ f\phi_{x} $ and
$\partial_{\rho}\cdot f\phi_{x}$, where $f\in \mathcal M$ is any 
meromorphic function and $\rho \in G$ is any (not necessary longest) 
element with reduced expression $\rho=\tau_1\tau_2\cdots\tau_k$.
Let us prove, by induction on $k$, 
that  
$$\partial_{\rho}\circ f\phi_{x}|_{\mathfrak M^G} = 
\partial_{\rho}\cdot f\phi_{x}|_{\mathfrak M^G} .$$ 
For $k=1$, the claim is obvious. To establish the induction step, we have
\begin{align*}
\partial_{\tau_1} \circ \cdots \circ \partial_{\tau_k}\circ f\phi_{x}|_{\mathfrak M^G}= \partial_{\tau_1} \circ \cdots \circ \partial_{\tau_{k-1}}\circ (\partial_{\tau_k} \cdot f\phi_{x})|_{\mathfrak M^G} = \\
\partial_{\tau_1} \circ \cdots \circ \partial_{\tau_{k-1}}\circ (f/\gamma_{\tau_k}\phi_{x} - (\tau_k\cdot f)/\gamma_{\tau_k} \tau_k \cdot \phi_{x}) |_{\mathfrak M^G} = \\
\partial_{\tau_1} \circ \cdots \circ \partial_{\tau_{k-1}}\cdot (f/\gamma_{\tau_k}\phi_{x} - (\tau_k\cdot f)/\gamma_{\tau_k} \tau_k\cdot \phi_{x}) |_{\mathfrak M^G} = \\
\partial_{\tau_1}  \cdots  \partial_{\tau_k}\cdot f\phi_{x}|_{\mathfrak M^G}.
\end{align*}
The result now follows from Proposition \ref{prop invariance}. 
\end{proof}

\subsection{Structure theorem for rational Galois order}
\label{sec Structure theorem for rational Galois order}\label{s7.2}

In this section we assume that  $G$ is a reflection group on 
$V\simeq\mathbb C^n$, where $\Phi$ is a root system
with basis $\Psi$  and $\Theta$ is the set of corresponding 
simple reflections (cf. Subsection~\ref{sec reflection group, roots, div diff}). 
We have the decomposition $\Phi = \Phi^+\cup \Phi^-$ corresponding to $\Psi$. 
Consider the following product of linear functions on $V$:
\begin{align*}
\Delta := \prod_{x \in \Phi^+} \gamma_x,
\end{align*}
where $\gamma_x (v) = (x,v)$, for any $v\in V$, see 
Section~\ref{sec reflection group, roots, div diff}. 
We have $\sigma_{x} \cdot \Delta  = -\Delta$, for any simple 
reflection $\sigma_{x}\in \Theta$. If $G=S_n$, then $\Delta$ 
may be identified with the classical Vandermonde determinant. 

Let us take $v\in V$ such that the stabilizer $G_v$ is 
parabolic in $G$ with respect to $\Theta$. Denote by $\Delta'$ 
the product of $\gamma_x$, where  $\sigma_x$ is a reflection 
in $G_v$. Let us take also a polynomial (or a holomorphic function) 
$p'$ and let $w$ be the longest element in $(G/G_{v})^{short}$.

Consider an element of the form $\sum\limits_{\tau\in G}   
\tau \cdot (\frac{p'}{\Delta} \phi_{v})$ from a rational Galois 
order $\mathcal A$, see Subsection~\ref{sec universal ring}. 
We always can choose  $p'$ such that it satisfies
$$
\tau \cdot p' = \chi(\tau) p',\quad \text{where} \quad \chi(\tau): = \frac{\tau \cdot \Delta}{\Delta}, \quad \text{ for }\quad\tau\in G_v. 
$$ 
Therefore we have $p'= \Delta' p$, where $p$ is a $G_v$-invariant polynomial 
or a holomorphic function (cf. Subsection~\ref{sec universal ring}). 
In other words, if $\Phi'\subset \Phi$ is the 
root subsystem corresponding to $G_v$, then 
\begin{align*}
\Delta' := \prod_{x \in \Phi'^+} \gamma_x,
\end{align*}
where $\Phi'^+$ is the subsystem of positive roots generated by $\Psi \cap \Phi'$. If $w_0$ is the longest element in $G$, then we have the following equality on global rational functions
\begin{equation}\label{eq d_w_01}
\partial_{w_0} = \sum_{\tau\in G} \tau \cdot \frac{1}{\Delta},
\end{equation}
see \cite[Section IV, Proposition 1.6]{Hill}. From Dedekind's Theorem it follows that the operators (\ref{eq d_w_01}) are equal as elements in $\mathcal S$. Therefore we have
\begin{equation}\label{eq d_w_0}
\partial_{w_0}|_{\mathfrak O} = \sum_{\tau\in G} \tau \cdot \frac{1}{\Delta}|_{\mathfrak O},
\end{equation}
 The following theorem generalizes \cite[Proposition 7]{EMV}.

\begin{theorem}[Structure Theorem]\label{theor structure}
{\hspace{1mm}}

\begin{enumerate}[$($a$)$]
\item\label{claim1} We have
\begin{equation}\label{eq structure of Galois algebra}
\sum_{\tau\in G}   
\tau \cdot \frac{\Delta'}{\Delta} p \phi_{v}\big|_{\mathfrak O^G} 
= a \partial_w \circ p\phi_v\big|_{\mathfrak O^G},
\end{equation}
where $a\ne 0$ is a scalar. 
\item\label{claim2} 
Let $G'$ be any subgroup in $G$ which is parabolic with respect
to  $\Theta$. Then 
\begin{equation}
\sum_{\tau\in G}   
\tau \cdot \frac{\Delta'}{\Delta} p \phi_{v}\big|_{\mathfrak O^G}  =\sum_{s=1}^k \partial_{w_s} \circ t_s \phi_{v_s}|_{\mathfrak O^G},
\end{equation}
where $w_s\in G'/G'_{v_s}$ is the longest reduced element and $t_s$ 
are rational functions defined in Weyl chambers and at 
$\ker \gamma_x$, where $x\in \Phi^+$ and  $\sigma_x\in G'$. 
\end{enumerate}
\end{theorem}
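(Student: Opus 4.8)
The plan is to prove \eqref{claim1} by combining a parabolic refinement of the Bernstein--Gelfand--Gelfand identity \eqref{eq d_w_01} with Lemma~\ref{lem A=B}, and then to deduce \eqref{claim2} by decomposing the $G$-symmetrization into $G'$-sym\-metrizations and rerunning the argument of \eqref{claim1} for the subgroup $G'$.

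For \eqref{claim1}, I would first record that the skew element $\frac{\Delta'}{\Delta}p\phi_v$ is invariant under the left action of $G_v$: every reflection in $G_v$ negates both $\Delta$ and $\Delta'$, so $\frac{\Delta'}{\Delta}$ is $G_v$-invariant, while $p$ is $G_v$-invariant by hypothesis and $\phi_{\sigma\cdot v}=\phi_v$ for $\sigma\in G_v$. Hence $\sum_{\tau\in G}\tau\cdot\frac{\Delta'}{\Delta}p\phi_v=|G_v|\sum_{u\in(G/G_v)^{short}}u\cdot\frac{\Delta'}{\Delta}p\phi_v$. The second ingredient is the operator identity
\begin{equation*}
\partial_w=\sum_{u\in(G/G_v)^{short}}\Big(u\cdot\frac{\Delta'}{\Delta}\Big)\,u,
\end{equation*}
valid in $\mathcal S(V)$. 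I would obtain it from the length-additive factorization $w_0=w\,w_{0,v}$ of the longest element, where $w_{0,v}$ is the longest element of $G_v$ and $w$ the longest element of $(G/G_v)^{short}$; this gives $\partial_{w_0}=\partial_w\circ\partial_{w_{0,v}}$, and comparing the coefficient of each $u\sigma$ (with $u\in(G/G_v)^{short}$, $\sigma\in G_v$) on both sides, using \eqref{eq d_w_01} for $G$ and for $G_v$ together with $\frac{\sigma\cdot\Delta'}{\sigma\cdot\Delta}=\frac{\Delta'}{\Delta}$, forces the coefficient of $u$ to be $u\cdot\frac{\Delta'}{\Delta}$. Evaluating the action-version of $\partial_w$ on $p\phi_v$ then yields $\partial_w\cdot p\phi_v=\sum_{u}u\cdot(\frac{\Delta'}{\Delta}p\phi_v)$, so that $\sum_{\tau\in G}\tau\cdot\frac{\Delta'}{\Delta}p\phi_v=|G_v|\,\partial_w\cdot p\phi_v$. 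Finally, since the core identity in the proof of Lemma~\ref{lem A=B} already holds on all of $\mathfrak M^G$, I may replace $\cdot$ by $\circ$ on $\mathfrak O^G$, which gives \eqref{eq structure of Galois algebra} with $a=|G_v|\neq 0$.

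For \eqref{claim2}, I would choose representatives $D$ for the left cosets $G'\backslash G$ and write $\sum_{\tau\in G}\tau\cdot X=\sum_{\rho\in D}\sum_{\sigma\in G'}\sigma\cdot(\rho\cdot X)$, where $X=\frac{\Delta'}{\Delta}p\phi_v$. Each inner block is a $G'$-symmetrization of the single element $\rho\cdot X=(\rho\cdot\frac{\Delta'}{\Delta}p)\phi_{\rho\cdot v}$, supported on the translations coming from the $G'$-orbit of $\rho\cdot v$; since $\frac{\Delta'}{\Delta}p$ is $G_v$-invariant, $\rho\cdot X$ is invariant under the full stabilizer $G'_{\rho\cdot v}$. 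After replacing $\rho\cdot v$ by a $G'$-conjugate $v_s$ with $G'_{v_s}$ parabolic with respect to $\Theta$ and collecting the blocks falling into the same $G'$-orbit, I would factor $\rho\cdot\frac{\Delta'}{\Delta}p=\frac{\Delta'_{G'}}{\Delta_{G'}}\,t_s$, where $\Delta_{G'}=\prod_{x\in\Phi^+,\ \sigma_x\in G'}\gamma_x$ and $\Delta'_{G'}=\prod_{x\in\Phi^+,\ \sigma_x\in G'_{v_s}}\gamma_x$, so that $t_s:=\rho\cdot(\frac{\Delta'}{\Delta}p)\cdot\frac{\Delta_{G'}}{\Delta'_{G'}}$ is $G'_{v_s}$-invariant. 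Running the argument of \eqref{claim1} for the group $G'$ and its parabolic $G'_{v_s}$ (the relative identity above for $G'$ together with the $\mathfrak M^{G'}$-version of Lemma~\ref{lem A=B}, which holds for arbitrary meromorphic coefficients) then identifies each block with $\partial_{w_s}\circ t_s\phi_{v_s}$ on $\mathfrak O^{G'}$, where $w_s$ is the longest element of $(G'/G'_{v_s})^{short}$. Since $\mathfrak O^{G}\subset\mathfrak O^{G'}$, these identities persist on $\mathfrak O^G$, and summing over $s$ yields the claim.

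The main obstacle is the regularity of the cofactors $t_s$ in \eqref{claim2}: I must show that each $t_s$, a priori only rational, is in fact holomorphic in every Weyl chamber and at every hyperplane $\ker\gamma_x$ with $\sigma_x\in G'$. This is precisely the pole bookkeeping underlying Theorem~\ref{theor structure of norm generated algebras}. Concretely, the identity $\frac{\Delta'}{\Delta}=\big(\prod_{x\in\Phi^+,\ \sigma_x\notin G_v}\gamma_x\big)^{-1}$ shows that $\rho\cdot(\frac{\Delta'}{\Delta}p)$ is holomorphic on the chambers and has at worst simple poles along the hyperplanes $\ker\gamma_z$ with $\sigma_z\notin G_{v_s}$; multiplying by the polynomial $\frac{\Delta_{G'}}{\Delta'_{G'}}=\prod_{x\in\Phi^+,\ \sigma_x\in G'\setminus G'_{v_s}}\gamma_x$ introduces exactly the simple zeros cancelling these poles along the $G'$-hyperplanes (for $\sigma_x\in G'_{v_s}$ the relevant $\gamma_x$ is absent, but there $\rho\cdot(\frac{\Delta'}{\Delta}p)$ is already holomorphic since $\sigma_x\in G_{v_s}$), while poles along hyperplanes of reflections outside $G'$ are left untouched. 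Carrying out this cancellation uniformly, and keeping track of the multiplicities produced by the coset bookkeeping and by the $G_v$-invariance of $X$ (which only affect $t_s$ up to a nonzero scalar), is the delicate part; the remaining steps are the standard manipulation of reduced factorizations of longest coset representatives.
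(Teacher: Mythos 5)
Your part~\eqref{claim2} follows the paper's proof essentially verbatim (the same decomposition over coset representatives of $G'\backslash G$ chosen with parabolic stabilizers, the same factorization $\tau_s\cdot\frac{\Delta'}{\Delta}p=\frac{\tilde{\Delta}_s}{\tilde{\Delta}}t_s$, and the same pole bookkeeping showing $t_s$ is regular on chambers and along the $G'$-hyperplanes), but your part~\eqref{claim1} rests on a false identity. You assert that
\begin{equation*}
\partial_w=\sum_{u\in(G/G_v)^{short}}\Bigl(u\cdot\frac{\Delta'}{\Delta}\Bigr)\,u
\end{equation*}
holds in $\mathcal S(V)$, and you propose to prove it by comparing coefficients of the elements $u\sigma$, with $u\in(G/G_v)^{short}$ and $\sigma\in G_v$, in the factorization $\partial_{w_0}=\partial_w\circ\partial_{w_{0,v}}$. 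This presupposes that $\partial_w$ is supported on the shortest coset representatives, which is not the case. Concretely, take $G$ of type $A_2$ with simple reflections $s_1,s_2$ and $G_v=\langle s_1\rangle$, so that $\Delta=\gamma_{\alpha_1}\gamma_{\alpha_2}\gamma_{\alpha_1+\alpha_2}$, $\Delta'=\gamma_{\alpha_1}$, $(G/G_v)^{short}=\{e,s_2,s_1s_2\}$ and $w=s_1s_2$. Expanding in $\mathcal S(V)$,
\begin{equation*}
\partial_{s_1}\circ\partial_{s_2}=\frac{1}{\gamma_{\alpha_1}\gamma_{\alpha_2}}
-\frac{1}{\gamma_{\alpha_1}\gamma_{\alpha_2}}\,s_2
-\frac{1}{\gamma_{\alpha_1}\gamma_{\alpha_1+\alpha_2}}\,s_1
+\frac{1}{\gamma_{\alpha_1}\gamma_{\alpha_1+\alpha_2}}\,s_1s_2,
\end{equation*}
which has a nonzero $s_1$-coefficient although $s_1\notin(G/G_v)^{short}$, and whose $e$-coefficient $\frac{1}{\gamma_{\alpha_1}\gamma_{\alpha_2}}$ differs from $\frac{\Delta'}{\Delta}=\frac{1}{\gamma_{\alpha_2}\gamma_{\alpha_1+\alpha_2}}$. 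So the displayed identity fails in $\mathcal S(V)$ and your coefficient comparison cannot establish it.

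The gap is repairable, because you only ever apply the identity to $p\,\phi_v(F)$ with $F\in\mathfrak M^G$, which is a $G_v$-invariant germ ($p$ is $G_v$-invariant and $G_v$ fixes $v$ and $F$), and the identity \emph{is} true as an equality of operators restricted to $G_v$-invariant functions (in the $A_2$ example above the two sides agree on $s_1$-invariants after combining the $e$- and $s_1$-terms). But the clean proof of this restricted identity is exactly the maneuver the paper uses, and you should substitute it for your coefficient comparison: insert $\Delta'$ so that, by \eqref{eq d_w_01} and the $\cdot$ versus $\circ$ comparison of Lemma~\ref{lem A=B}, one has $\sum_{\tau\in G}\tau\cdot\frac{\Delta'}{\Delta}p\phi_v\big|_{\mathfrak O^G}=\partial_{w_0}\circ\Delta' p\phi_v\big|_{\mathfrak O^G}$; then factor $\partial_{w_0}=\partial_w\circ\partial_{w_{0,v}}$ and use the twisted Leibniz rule to get $\partial_{w_{0,v}}\circ\Delta' p\phi_v\big|_{\mathfrak O^G}=p\phi_v\,\partial_{w_{0,v}}(\Delta')\big|_{\mathfrak O^G}$ with $\partial_{w_{0,v}}(\Delta')$ a nonzero constant (the precise value of $a$ is immaterial for the statement). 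With this repair, the remainder of your proposal goes through and coincides with the paper's proof, including your correct observation that the core identity in the proof of Lemma~\ref{lem A=B} holds for meromorphic coefficients, which both you and the paper implicitly need when applying part~\eqref{claim1} to the rational functions $t_s$ in part~\eqref{claim2}.
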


\begin{proof}  
Note that we always can choose $v$ such that $G_v$ is parabolic with respect to $\Theta$.  Using (\ref{eq d_w_0}), we have
\begin{align*}
\sum_{\tau\in G}   
\tau \cdot \frac{\Delta'}{\Delta} p \phi_{v}\big|_{\mathfrak O^G} = \sum_{\tau\in G} \tau \cdot \frac{1}{\Delta} \Delta' p \phi_{v}|_{\mathfrak O^G} = \partial_{w_0} \circ \Delta' p \phi_{v}|_{\mathfrak O^G} =\\
 \partial_{w} \circ \partial_{w'_0} \circ \Delta' p \phi_{v}|_{\mathfrak O^G}= \partial_{w} \circ  p \phi_{v} \partial_{w'_0} \circ (\Delta')|_{\mathfrak O^G}= a \partial_{w} \circ  p \phi_{v} |_{\mathfrak O^G},
\end{align*}	
where $w'_0$ is the longest element in $G_v$. This implies claim \eqref{claim1}.

To prove claim~\eqref{claim2}, let $G'$ be a subgroup in $G$ which 
is parabolic with respect to $\Theta$. We have
\begin{align*}
\sum_{\tau\in G}   
\tau \cdot \frac{\Delta'}{\Delta} p \phi_{v} =  \sum_{\tau\in G'}  \tau 
\cdot  \sum_{s=1}^{k}
\tau_s \cdot \frac{\Delta'}{\Delta} p \phi_{v}.
\end{align*}
Here $\tau_s \in G'\backslash G$ is a  coset representative and 
$k= |G'\backslash G|$. Note that we can choose the representatives $\tau_s$ 
such that $\phi_{v_s} :=\tau_s \cdot \phi_{v}$ has a parabolic stabilizer 
$G'_{v_s}$ with respect to $\Theta$. 

Denote by $\tilde{\Delta}$  the product of $\gamma_x$, 
where $x\in \Phi^+$ and  $\sigma_x\in G'$, and by $\tilde{\Delta}_s$  
the product of $\gamma_x$, where $x\in \Phi^+$ and  
$\sigma_x\in G'_{v_s}$. Clearly, $\tilde{\Delta}$ is a divisor 
of $\Delta$ and $\tilde{\Delta}_s$ is a divisor of 
$\tau_s\cdot  \Delta'$. Denote by $l(\tau_s)$ the length of $\tau_s$.  We have
\begin{align*}
\tau_s \cdot \frac{\Delta'}{\Delta} p \phi_{v} =  \frac{(-1)^{l(\tau_s)}\tau_s\cdot \Delta' }{\Delta} p_s \phi_{v_s} =  \frac{\tilde{\Delta}_s}{\tilde{\Delta}} \frac{(-1)^{l(\tau_s)} (\tau_s\cdot \Delta')/ \tilde{\Delta}_s }{\Delta/ \tilde{\Delta}} p_s \phi_{v_s},
\end{align*}
where $p_s:= \tau_s \cdot p$.  We put 
$$
t_s:= \frac{(-1)^{l(\tau_s)} (\tau_s\cdot \Delta')/ \tilde{\Delta}_s }{\Delta/ \tilde{\Delta}} p_s.
$$
We see that $t_s$ are rational functions defined in Weyl 
chambers and at  $\ker \gamma_x$, where $x\in \Phi^+$ and  $\sigma_x\in G'$. 

Using \eqref{claim1}, we obtain
\begin{align*}
\sum_{s=1}^{k} \sum_{\tau\in G'} \tau \cdot  
\frac{\tilde{\Delta}_s}{\tilde{\Delta}} t_s \phi_{v_s} |_{\mathfrak O^G}= 
\sum_{s=1}^{k} a_s \partial_{w_s} \circ t_s\phi_{v_s}|_{\mathfrak O^G},
\end{align*}
where $a_s\ne 0$ are scalars and $w_s\in G'/ G'_{v_s}$
are longest element in the set of shortest coset representatives. 
\end{proof}

In the case $G=S_n$, formula~\eqref{eq structure of Galois algebra} 
was conjectured by the second author in \cite{ViFAP} and later
independently proved in \cite{RZ,EMV}.  It was extended
to an arbitrary reflection group in \cite{FuZ} where it was also shown
that it plays a crucial role  in construction and study of
simple Gelfand-Zeitlin modules for rational Galois orders.  

Consider a rational Galois order $\mathcal A$ as above. 
Fix $v\in V$ and denote by $H$ the subgroup in $G$ generated 
by all stabilizers $G_v$, where $v\in \gimel \cdot v$. 
In the proof of Theorem~\ref{theor structure} we 
obtained the following expression
\begin{align*}
A=\sum_{\tau\in G}   
\tau \cdot \frac{\Delta'}{\Delta} p \phi_{v} = \sum_{s=1}^{k} \sum_{\tau\in H} \tau \cdot  \frac{\tilde{\Delta}_s}{\tilde{\Delta}} t_s \phi_{v_s},  
\end{align*}
where $\tilde{\Delta}$ is the product of $\gamma_x$, 
for $x\in \Phi^+$ and  $\sigma_x\in H$, and $\tilde{\Delta}_s$  
the product of $\gamma_x$, for $x\in \Phi^+$ and  $\sigma_x\in H_{v_s}$. 
By Lemma~\ref{lem rational Gal order preserve O^G}, the operator $A$ 
preserves the vector spaces $\mathfrak O^G$ and $\mathfrak O^H$. 
Therefore we have the families of modules given by 
Theorem~\ref{theor homomorphism of A-modules}. In particular,
we have the $\mathcal A$-modules 
$M^*(G,(G \ltimes \gimel) \cdot v)$ and $M^*(H, \gimel \cdot v)$. 
A basis of these modules is constructed in 
Theorem~\ref{theor basis, reflection group}. 
Using Theorem~\ref{theor structure}, we get the following fairly explicit
result.

\begin{corollary}\label{theor main A-modules explicite action of algebra A}
With respect to the basis of Theorem~\ref{theor basis, reflection group},
the action of $\mathcal A$ on the modules $M^*(G,(G \ltimes \gimel) \cdot v)$ or 
$M^*(H, \gimel \cdot v)$ can be computed using the following formula: 
\begin{equation}
\begin{split}
(ev_0 \circ \partial_{w}\circ \phi_{\xi}) \circ A|_{\mathfrak O^{G}}  = 	(ev_0 \circ \partial_{w}\circ \phi_{\xi}) \circ (\partial_w \circ p\phi_v)|_{\mathfrak O^{G}} =\\ \sum_{s=1}^{k} a_s ev_0 \circ \partial_{w}  \circ \partial_{w_s} \circ (\phi_{\xi} \cdot t_s)  \phi_{\xi\circ v_s}|_{\mathfrak O^{G}},
\end{split}
\end{equation}
where $a_s\in \mathbb C\setminus\{0\}$, cf. Theorem~\ref{theor structure}. 
Here $t_s$ and $v_s$ correspond to $G'= G_{\xi}$. 
\end{corollary}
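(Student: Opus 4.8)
The plan is to read the left-hand side as the dual action of $A$ on the basis of $M^*(G,(G \ltimes \gimel) \cdot v)$ furnished by Theorem~\ref{theor basis, reflection group}. By Lemma~\ref{lem action on M^*} the action of a generator $A$ on $M^*$ is precomposition, so the basis functional $ev_0\circ\partial_w\circ\phi_\xi$ (with $w\in(G/G_\xi)^{short}$) is sent to $(ev_0\circ\partial_w\circ\phi_\xi)\circ A|_{\mathfrak O^G}$, and Lemma~\ref{lem rational Gal order preserve O^G} guarantees this is again a well-defined functional on $\mathfrak O^G$. Throughout I would fix the representative $\xi$ of its $G$-class so that $G_\xi$ is parabolic with respect to $\Theta$, which is exactly the normalization used in Lemma~\ref{lemma basis roots}; this is what makes the subsequent application of the Structure Theorem legitimate.

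First I would establish the first displayed equality simply by invoking part~(a) of Theorem~\ref{theor structure}: since $A=\sum_{\tau\in G}\tau\cdot\frac{\Delta'}{\Delta}p\phi_v$ is a rational Galois order generator, on $\mathfrak O^G$ it coincides with the single divided-difference operator $\partial_{w}\circ p\phi_v$ up to a nonzero scalar, which I would carry along and eventually fold into the constants $a_s$. Then I would apply part~(b) of Theorem~\ref{theor structure} with the choice $G'=G_\xi$; this rewrites $A|_{\mathfrak O^G}$ as the sum $\sum_{s=1}^{k}\partial_{w_s}\circ t_s\phi_{v_s}|_{\mathfrak O^G}$ with $w_s\in G_\xi/(G_\xi)_{v_s}$ the longest shortest-coset representatives and $t_s$ the rational functions produced in the proof of that theorem, so that the whole expression is now adapted to the point $\xi$.

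The heart of the argument, and the step I expect to be the main obstacle, is commuting $\phi_\xi$ past the localized operators $\partial_{w_s}$, i.e.\ proving $\phi_\xi\circ\partial_{w_s}=\partial_{w_s}\circ\phi_\xi$ on $\mathfrak O^G$ for $w_s\in G_\xi$. Here I would reduce to a simple reflection $\sigma_x\in G_\xi$, i.e.\ one with $\sigma_x(\xi)=\xi$; then $(\xi,x)=0$ gives $\gamma_x\circ\xi^{-1}=\gamma_x$, while $\sigma_x(\xi)=\xi$ means $\sigma_x$ and $\xi$ commute in $G\ltimes V$, so $(\sigma_x\cdot f)\circ\xi^{-1}=\sigma_x\cdot(f\circ\xi^{-1})$. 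Feeding these two facts into $\partial_{\sigma_x}f=(f-\sigma_x\cdot f)/\gamma_x$ shows $\phi_\xi\circ\partial_{\sigma_x}=\partial_{\sigma_x}\circ\phi_\xi$, and multiplicativity of $\partial$ along a reduced word for $w_s$ inside $G_\xi$ then yields the full commutation. The delicate point to state carefully is that a reduced expression for $w_s$ inside the parabolic subgroup $G_\xi$ remains reduced in $G$, so that $\partial_{w_s}$ is unambiguous and factors through $\xi$-fixing reflections only.

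Finally I would assemble the pieces. Commuting $\phi_\xi$ inward turns each summand into $ev_0\circ\partial_w\circ\partial_{w_s}\circ\phi_\xi\circ t_s\phi_{v_s}$, and the elementary identity $\phi_\xi\circ(t_s\phi_{v_s})=(\phi_\xi\cdot t_s)\phi_{\xi\circ v_s}$, obtained by unwinding $\phi_\xi(f)=f\circ\xi^{-1}$ together with $\xi\circ v_s=\xi v_s$, rewrites it as $ev_0\circ\partial_w\circ\partial_{w_s}\circ(\phi_\xi\cdot t_s)\phi_{\xi\circ v_s}$. Collecting the scalars coming from Theorem~\ref{theor structure} into the constants $a_s\in\mathbb C\setminus\{0\}$ produces the stated formula. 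The identical computation, with $G$ replaced by $H$ everywhere, handles the module $M^*(H,\gimel\cdot v)$, since $A$ preserves $\mathfrak O^H$ as well by Lemma~\ref{lem rational Gal order preserve O^G}.
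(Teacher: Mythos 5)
Your proof is correct and takes essentially the route the paper intends: the corollary is stated as a direct consequence of Theorem~\ref{theor structure}, applying part~(a) and then part~(b) with $G'=G_{\xi}$ to the dual (precomposition) action from Lemma~\ref{lem action on M^*}, with the scalar from part~(a) folded into the $a_s$. The one step the paper leaves implicit --- the commutation $\phi_{\xi}\circ\partial_{w_s}=\partial_{w_s}\circ\phi_{\xi}$ for $w_s\in G_{\xi}$ --- you verify correctly via $(x,\xi)=0$ (so $\gamma_x\circ\xi^{-1}=\gamma_x$), the fact that elements of the stabilizer commute with $\xi$ in $G\ltimes V$, and the observation that reduced words in the standard parabolic $G_{\xi}$ stay reduced in $G$.
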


\section{A construction of simple modules and sufficient conditions for simplicity }\label{s8}

\subsection{Canonical simple Harish-Chandra modules}\label{sec canonical simple}

In this section we constract a family of simple modules which we will call {\it canonical Harish-Chandra modules}. This construction generalizes the corresponding constructions from \cite{EMV} and \cite{Har}. Assume that $V$ is a complex-analytic Lie group, $G$ is a finite group, $\gimel\subset V$ 
is a subgroup and $v\in V$.   Let $\mathcal A\subset S(V)^G$ be a subalgebra 
containing $H^0(V, \mathcal O^G)$, which 
preserves the vector space $\mathfrak O^G|_{(G\ltimes \gimel) \cdot v}$. Consider the $\mathcal A$-module $M(G, (G\ltimes \gimel) \cdot v)$.  Denote by $N_{\bar w}$, where $\bar w= G\cdot w$ for some $w\in (G\ltimes \gimel) \cdot v$, the submodule in  $M(G, (G\ltimes \gimel) \cdot v)$ generated by $\tilde 1_{\bar w}\in \mathbb E^G_{\bar w}$, where $\tilde 1_{\bar w}$ is the class generated by the constant function $1$.

\begin{proposition}
	Assume that $H^0(V, \mathcal O^G)$ separates orbits in $(G\ltimes \gimel) \cdot v$ and that the $H^0(V, \mathcal O^G)$-module $\mathbb E^G_{\bar w}$ is generated by $\tilde 1_{\bar w}$. Then $N_{\bar w}$ has a unique maximal submodule. 
\end{proposition}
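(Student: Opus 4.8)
The plan is to realize $N_{\bar w}$ as a cyclic module whose ``top'' is the regular representation of a finite-dimensional local algebra, and to show that every submodule is forced to respect a generalized weight decomposition, after which a Nakayama-type argument applies. Write $\mathcal B:=H^0(V,\mathcal O^G)$ and $M:=M(G,(G\ltimes\gimel)\cdot v)=\bigoplus_{\bar\xi\in (G\ltimes\gimel)\cdot v/G}\mathbb E^G_{\bar\xi}$. First I would record the $\mathcal B$-module structure of $M$. An element of $\mathcal B$ is a global $G$-invariant function, i.e.\ a shift by the identity $e\in V$, so it preserves each fiber $\mathbb E^G_{\bar\xi}$ and acts there by multiplication by the germ of $f$ at $\xi$. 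By Theorem~\ref{cor dim of algebraic anf holom coinsiede} each fiber is finite dimensional, and $f\in\mathcal B$ acts on $\mathbb E^G_{\bar\xi}$ with the single generalized eigenvalue $f(\xi)$. Since $\mathcal B$ separates the orbits in $(G\ltimes\gimel)\cdot v$, distinct $\bar\xi$ yield distinct characters $\chi_{\bar\xi}\colon f\mapsto f(\xi)$, so $M=\bigoplus_{\bar\xi}\mathbb E^G_{\bar\xi}$ is precisely the decomposition of $M$ into generalized weight spaces for the commutative algebra $\mathcal B$.

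The key structural step is that every $\mathcal A$-submodule $K\subseteq M$ is graded, that is $K=\bigoplus_{\bar\xi}(K\cap\mathbb E^G_{\bar\xi})$. I would prove this by the standard spectral-idempotent argument: any $x\in K$ has only finitely many nonzero components, say $x=\sum_{i=1}^n x_i$ with $x_i\in\mathbb E^G_{\bar\xi_i}$, and thus lies in the finite-dimensional $\mathcal B$-stable subspace $U:=\bigoplus_{i=1}^n\mathbb E^G_{\bar\xi_i}$. The image of $\mathcal B$ in $\operatorname{End}(U)$ is a finite-dimensional commutative algebra whose primary decomposition is indexed by the distinct characters $\chi_{\bar\xi_1},\dots,\chi_{\bar\xi_n}$; the corresponding idempotents are the projections $U\to\mathbb E^G_{\bar\xi_i}$ and already lie in the image of $\mathcal B$. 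Applying them to $x$ shows that each $x_i\in\mathcal B\cdot x\subseteq K$, giving the claim. Since $\mathcal B\subseteq\mathcal A$, being an $\mathcal A$-submodule is in particular being a $\mathcal B$-submodule, which is all that is used here, and the required local finiteness is guaranteed by Proposition~\ref{prop all modules are GZ}.

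Next I would analyze the distinguished fiber. Choosing a representative $w$ of $\bar w$ and using $\mathcal O^G_w=\mathcal O^{G_w}_w$ from Lemma~\ref{lem raznesinie}, the fiber $\mathbb E^G_{\bar w}\cong(\mathbb E_w)^{G_w}$ is the ring of $G_w$-invariants of a finite-dimensional quotient of the local ring $\mathcal O^G_w$; as the invariants of a finite group in a local $\mathbb C$-algebra this is again a finite-dimensional local $\mathbb C$-algebra $R$ with maximal ideal $\mathfrak m$, and $\tilde 1_{\bar w}$ is its unit $1_R\notin\mathfrak m$. By hypothesis $\mathbb E^G_{\bar w}=\mathcal B\cdot\tilde 1_{\bar w}$, so $\mathbb E^G_{\bar w}\subseteq N_{\bar w}$, and $\mathbb E^G_{\bar w}$ is the regular $R$-module: since $\mathcal B\twoheadrightarrow R$, its $\mathcal B$-submodules are exactly the ideals of $R$, and $R$ being local these are all contained in $\mathfrak m$ unless they contain $1_R$.

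Finally I would assemble the argument. Since $N_{\bar w}=\mathcal A\cdot\tilde 1_{\bar w}$ is cyclic, a submodule $K\subseteq N_{\bar w}$ is proper if and only if $\tilde 1_{\bar w}\notin K$. For such a $K$, gradedness shows that $K\cap\mathbb E^G_{\bar w}$ is a $\mathcal B$-submodule of $\mathbb E^G_{\bar w}$ not containing $1_R$, hence a proper ideal of $R$, so $K\cap\mathbb E^G_{\bar w}\subseteq\mathfrak m$. Let $N'$ be the sum of all proper submodules of $N_{\bar w}$. As a sum of graded submodules it is graded, and its $\bar w$-component $\sum_K(K\cap\mathbb E^G_{\bar w})$ is contained in $\mathfrak m$, whence $\tilde 1_{\bar w}\notin N'$. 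Therefore $N'$ is itself a proper submodule containing every proper submodule, i.e.\ the unique maximal submodule of $N_{\bar w}$. The main obstacle is the gradedness step, as it is what forces proper submodules to miss the generator; once that and the locality of $R$ are in place, the conclusion is a direct local-algebra/Nakayama-type argument.
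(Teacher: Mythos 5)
Your proof is correct and takes essentially the same route as the paper: the paper's (one-line) proof declares the unique maximal submodule to be the sum of all submodules $N'$ with $N'\cap \mathbb E^G_{\bar w}\subseteq \mathfrak n_{\bar w}\cdot \tilde 1_{\bar w}$, which is exactly your $\mathfrak m$ under the surjection $H^0(V,\mathcal O^G)\twoheadrightarrow R$. Your generalized weight-space decomposition, the gradedness of submodules via spectral idempotents, and the locality of the fiber algebra are precisely the details needed to justify that this sum is proper and contains every proper submodule, which the paper leaves implicit.
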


\begin{proof}
	The  unique maximal submodule is the sum of all submodules $N'$ in $N_{\bar w}$ such that  $N'\cap \mathbb E^G_{\bar w} \subset \mathfrak n_{\bar w} \cdot \tilde 1_{\bar w}$, where $\mathfrak n_{\bar w}\subset H^0(V, \mathcal O^G)$ is the ideal of all $G$-invariant functions that  are equal to $0$ at $\bar w$.
\end{proof}

The quotient of $N_{\bar w}$ by its unique maximal submodule is denoted $L_{\bar w}$ and is
called the {\it canonical simple Harish-Chandra module associated to $\bar w$}.

\subsection{Standard algebras of type $\mathbb A$} \label{s8.1}

Let $V$ and $G$ be as in Subsection~\ref{sect def of OGZA} and 
\ref{sec Standard algebras of type A}.
As we have seen in Corollary~\ref{coroll subalgebras in univ ring D},  
a finitely generated over $H^0(V,\mathcal O^G)$ standard algebra of
type $\mathbb A$ that preserves the vector space $\mathfrak O^G$ is a rational 
Galois order. Consider a special case of such algebras, the algebra 
$\mathcal A$ that is generated by $H^0(V,\mathcal O^G)$ and by 
the following elements:
\begin{align*}
E_i=\sum\limits_{g\in G}g\cdot(\frac{\Delta' H^E_i}{\Delta}\phi_{\xi^{a}_{i}}), \quad F_i=\sum\limits_{g\in G}g\cdot(\frac{\Delta' H^F_i}{\Delta}\phi_{\xi^{-a}_{i}}), \quad i=1,\ldots, n,
\end{align*} 
where $a\in \mathbb C\setminus \{ 0\}$, $H^E_i, H^F_i$ are holomorphic 
functions in $V$ such that we have $G_{H^E_i} = G_{H^F_i} = G_{\xi^{a}_{i}}$, 
for  $i=1,\ldots, n$, and $\Delta$ and $\Delta'$ are as in 
Subsection~\ref{sec Structure theorem for rational Galois order}. 

Let $\gimel$ be a subgroup in $V$ generated by $\xi^{a}_{i}$, 
where $i=1,\ldots, n$, and $v'\in V$ be any point. In this case, 
for $G_{\gimel\cdot v'}$ we have an analogue of 
Lemma~\ref{lem G_v is stan of the orbit}. That is, there exists 
$v\in \gimel\cdot v'$ such that $G_{\gimel\cdot v'} = G_v$. 
The module $M^*(G_{\gimel\cdot v}, \gimel\cdot v) = M^*(G_{v}, \gimel\cdot v)$
was studied in \cite[Theorem~11]{EMV}. 
More precisely, in \cite{EMV} the following theorem was proved. 

\begin{theorem}\cite[Theorem~11]{EMV}
Assume that $H^E_i, H^F_i$, where $i=1,\ldots, n$, 
have no zeros on $\gimel\cdot v$. Then the 
$\mathcal A$-module $M^*(G_{v}, \gimel\cdot v)$ is irreducible. 
\end{theorem}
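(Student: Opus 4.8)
The plan is to combine a weight-space decomposition coming from the commutative subalgebra $\mathcal B=H^0(V,\mathcal O^G)$ with an explicit control of how the generators $E_i,F_i$ move between weight spaces; the non-vanishing of $H^E_i,H^F_i$ is exactly what prevents these moves from degenerating. First I would observe that $\mathcal B$ separates the $G_v$-orbits inside $\gimel\cdot v$: if $g\in G$ carries one point of $\gimel\cdot v$ to another then, since $G$ normalizes $\gimel$, it preserves the orbit $\gimel\cdot v$ and hence lies in $G_{\gimel\cdot v}=G_v$ by Lemma~\ref{lem G_v is stan of the orbit}; thus the $G$-orbits and the $G_v$-orbits inside $\gimel\cdot v$ coincide and are separated by the $G$-invariant functions. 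As $M^*(G_v,\gimel\cdot v)$ is a Harish-Chandra module by Proposition~\ref{prop all modules are GZ}, $\mathcal B$ acts locally finitely, and each $f\in\mathcal B$ acts on $W_{\bar\xi}:=(\mathbb E^{G_v}_{\bar\xi})^*$ with the single generalized eigenvalue $f(\bar\xi)$. Separation of orbits then yields the generalized weight decomposition $M^*(G_v,\gimel\cdot v)=\bigoplus_{\bar\xi\in\gimel\cdot v/G_v}W_{\bar\xi}$, so every nonzero $\mathcal A$-submodule $N$ is $\mathcal B$-stable, hence graded, $N=\bigoplus_{\bar\xi}\bigl(N\cap W_{\bar\xi}\bigr)$, and in particular contains a nonzero weight vector.

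Next I would control the movement between weight spaces. The lattice $\gimel$ is generated by the $\xi^a_i$, and by Lemma~\ref{lem action on M^*} the generator $E_i$ (resp.\ $F_i$) carries $W_{\bar\xi}$ into the weight spaces attached to the shifts of $\bar\xi$ by $\pm\xi^a_i$, so $E_i,F_i$ connect the whole orbit graph $\gimel\cdot v/G_v$. The precise matrix coefficients are supplied by Corollary~\ref{theor main A-modules explicite action of algebra A}: writing $E_i$ via the Structure Theorem~\ref{theor structure} as $\partial_{w}\circ H^E_i\phi_{\xi^a_i}$ and pairing against the basis $ev_e\circ\partial_{w'}\circ\phi_{\tilde\xi}$ of Theorem~\ref{theor basis, reflection group}, one obtains coefficients of the schematic form $\bigl(ev_e\circ\partial_{w'}\circ\partial_{w_s}\bigr)\bigl(\phi_{\tilde\xi}\cdot t_s\bigr)$, where $t_s$ is assembled from $H^E_i$ (resp.\ $H^F_i$) and from ratios of the $\Delta$-factors. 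The point is that the only mechanism by which such a coefficient can vanish identically along the orbit is a zero of $H^E_i$ or $H^F_i$; the hypothesis that these have no zeros on $\gimel\cdot v$ therefore guarantees that the moves are genuinely nonzero. I would also single out the distinguished weight space $W_{\bar v}$: since $v$ has the maximal stabilizer $G_v=G_{\gimel\cdot v}$, the short coset set $(G_v/(G_v)_v)^{short}$ is a singleton and $\dim W_{\bar v}=1$, whereas a generic point with trivial stabilizer gives $\dim W_{\bar\xi}=|G_v|$.

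The main obstacle is precisely this variation of dimension, i.e.\ the branching of $E_i$ (which shifts a single entry of row $i$ by $\pm a$ and so creates or destroys equalities of entries) as it passes between weight spaces whose dimensions differ, where the corresponding maps are no longer scalars. I would argue in two directions. For the descending step, starting from a nonzero vector in an arbitrary $W_{\bar\xi}$ I would apply a sequence of lowering operators that successively restore equalities of entries until reaching $\bar v$; using the triangularity of the divided-difference action with respect to the Schubert basis $\mathcal P_g$ together with the non-vanishing of the $H$'s, one shows that a suitable such sequence does not annihilate the given vector, so $N$ meets, and hence contains, the one-dimensional $W_{\bar v}$. For the ascending step one shows conversely that $W_{\bar v}$ generates every $W_{\bar\xi}$: approaching a point $\bar\xi$ from the several lattice directions in which the various equalities of its entries are broken, the images of the corresponding vectors under the raising operators span $W_{\bar\xi}$, the spanning being exactly the linear independence of the relevant functionals $ev_e\circ\partial_{w'}\circ\partial_{w_s}$ on the Schubert basis — again nonzero by the hypothesis on $H^E_i,H^F_i$. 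Proving these two max-rank statements (no annihilation when descending, spanning when ascending) is the technical heart; everything else is formal weight-space bookkeeping.

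Assembling the pieces gives the theorem: by the first paragraph a nonzero submodule $N$ contains a nonzero weight vector; by the descending step $N\supseteq W_{\bar v}$; and by the ascending step $\mathcal A\cdot W_{\bar v}=M^*(G_v,\gimel\cdot v)$, whence $N=M^*(G_v,\gimel\cdot v)$ and the module is irreducible.
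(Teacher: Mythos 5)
A preliminary remark: the paper does not prove this statement at all --- it is quoted verbatim from \cite[Theorem~11]{EMV}, and the only content added here is the remark (credited to \cite[Theorem~8.5]{FuZ}) that the proof of \cite{EMV} goes through unchanged for arbitrary $H^E_i, H^F_i$. So your attempt can only be compared with the argument of \cite{EMV}, which it does resemble in outline: a generalized weight decomposition with respect to $\mathcal B=H^0(V,\mathcal O^G)$, a distinguished one-dimensional weight space at $\bar v$, and a two-way connectivity argument powered by the non-vanishing of the $H$'s. Your first paragraph is essentially sound: $G_{\gimel\cdot v}=G_v$ (Lemma~\ref{lem G_v is stan of the orbit}) does identify $G$-orbits with $G_v$-orbits inside $\gimel\cdot v$, invariants of a finite group separate orbits on $V\simeq\mathbb C^{\sum n_k}$, and the gradedness of any submodule with respect to the generalized weight decomposition is standard bookkeeping, as is the computation $\dim W_{\bar v}=1$ via Theorem~\ref{theor basis, reflection group}.

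The genuine gap is in your second and third paragraphs, and you half-concede it yourself when you call the two max-rank statements ``the technical heart.'' Two assertions carry the entire proof and are never established. First, the claim that \emph{the only mechanism by which a matrix coefficient can vanish identically along the orbit is a zero of $H^E_i$ or $H^F_i$} is unjustified and, as stated, false: the coefficients supplied by Corollary~\ref{theor main A-modules explicite action of algebra A} have the form $ev_e\circ\partial_{w'}\circ\partial_{w_s}$ applied to products involving the ratios $t_s$ of $\Delta$-factors, and such divided-difference evaluations can vanish for purely combinatorial reasons --- $\partial_{w'}\circ\partial_{w_s}=0$ whenever the lengths do not add, the evaluation at $e$ annihilates all terms of the wrong degree, and the $\Delta$-ratios inside $t_s$ can themselves vanish at points of the orbit --- entirely independently of the $H$'s. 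Second, the ``descending'' injectivity (that some sequence of lowering operators does not annihilate a given nonzero vector of $W_{\bar\xi}$, a space of dimension up to $|G_v|$) and the ``ascending'' spanning statement are precisely the explicit matrix computations that occupy the bulk of the proof of \cite[Theorem~11]{EMV}; invoking ``triangularity with respect to the Schubert basis'' names the expected mechanism but does not verify it. One must exhibit, for each pair of adjacent weight spaces, the actual transition matrices and show they are (uni)triangular with diagonal entries that are nonzero exactly because $H^E_i, H^F_i$ do not vanish on $\gimel\cdot v$; until that is done, the hypothesis on the $H$'s has not actually been used, and the argument is a plan rather than a proof. (A smaller slip of the same kind: since the module is the dual $M^*$, the action transposes and the arrows of your orbit graph reverse, which must be tracked when choosing the descending/ascending sequences.)
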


In \cite{EMV}, this theorem was proved only for a special choice 
of functions $H^E_i, H^F_i$. However exactly the same proof as in 
\cite{EMV} works for any functions $H^E_i, H^F_i$. This fact was 
noticed in \cite[Theorem~8.5]{FuZ}, where the result \cite[Theorem~11]{EMV} 
was discussed in detail. 

\subsection{Regular modules}\label{8.2}

Assume that $V$ is a complex-analytic Lie group, $\gimel\subset V$ 
is a subgroup and $v\in V$.   Let $\mathcal A\subset S(V)$ be a 
finitely generated, over $H^0(V, \mathcal O)$, subalgebra which 
preserves the vector space 
$\mathfrak O|_{\gimel \cdot v}= \mathfrak O^e|_{\gimel \cdot v}$. 
We denote by $\Gamma$ an oriented graph that is defined in the 
following way. The vertices of $\Gamma$ are all points from 
$\gimel \cdot v$ and  we connect $x$ and $y$ with an arrow 
$x \to y$ if there exists $A=\sum f_i\phi_{\xi_i}\in \mathcal A$  
and $i_0$ such that $\phi_{\xi_{i_0}} (x) = y$ and $f_{i_0}(y)\ne 0$. 
Note that, in this case, all $f_i$ are holomorphic in $\gimel \cdot v$.  

\begin{proposition}\label{propn231}
Assume that  $\mathcal A\subset S(V)$ is a finitely generated 
over $H^0(V, \mathcal O)$ subalgebra that preserves the vector space 
$\mathfrak O|_{\gimel \cdot v}= \mathfrak O^e|_{\gimel \cdot v}$, 
$H^0(V, \mathcal O)$ separates points of $\gimel \cdot v$ and 
$\Gamma$ is connected as an oriented graph. Then the 
$\mathcal A$-module $M(\{e\}, \gimel \cdot v )$ is irreducible. 
\end{proposition}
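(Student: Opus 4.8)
The plan is to make the module completely explicit and then reduce irreducibility to a combinatorial statement about $\Gamma$ that the connectivity hypothesis resolves. Since $G=\{e\}$, each fibre $\mathbb E_{x}$ with $x\in\gimel\cdot v$ equals $\mathcal O_x/\mathfrak m_x\cong\mathbb C$, so $M(\{e\},\gimel\cdot v)=\bigoplus_{x\in\gimel\cdot v}\mathbb C\,[x]$, where $[x]$ denotes the class of the constant germ $1$ at $x$. First I would record the induced action (whose existence is guaranteed by Theorem~\ref{theor action in the bundle}) on this basis: lifting $[x]$ to $1_x$, applying $A=\sum_i f_i\phi_{\xi_i}\in\mathcal A$ and projecting back to the fibres over $\gimel\cdot v$ gives
\begin{equation*}
A\cdot[x]=\sum_{i\,:\,\phi_{\xi_i}(x)\in\gimel\cdot v} f_i(\phi_{\xi_i}(x))\,[\phi_{\xi_i}(x)],
\end{equation*}
which is exactly the data encoded by the arrows of $\Gamma$: the coefficient of $[y]$ is nonzero for some $A\in\mathcal A$ precisely when there is an arrow $x\to y$. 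In particular the subalgebra $\mathcal B:=H^0(V,\mathcal O)\subset\mathcal A$ acts diagonally, by $h\cdot[x]=h(x)[x]$.

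The key technical step is an \emph{isolation lemma} for $\mathcal B$-stable subspaces. Because $\mathcal B$ separates the points of $\gimel\cdot v$, the evaluation characters $\mathrm{ev}_x\colon\mathcal B\to\mathbb C$, for $x$ in any finite subset $S\subset\gimel\cdot v$, are pairwise distinct, hence linearly independent by Dedekind's lemma on independence of characters. Consequently the map $\mathcal B\to\mathbb C^{S}$, $h\mapsto(h(x))_{x\in S}$, is surjective, so for every $x_0\in S$ there is $h\in\mathcal B$ with $h(x_0)=1$ and $h(x)=0$ for $x\in S\setminus\{x_0\}$. Applying such an $h$ to any element $m=\sum_{x\in S}c_x[x]$ of a $\mathcal B$-submodule $N$ yields $c_{x_0}[x_0]\in N$. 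Thus any nonzero $\mathcal A$-submodule $N$ (being in particular $\mathcal B$-stable) contains at least one basis vector $[x]$, and more generally, whenever an element of $N$ has a nonzero $[y]$-coefficient, we may conclude $[y]\in N$.

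With this in hand I would run the connectivity argument. Suppose $[x]\in N$ and there is an arrow $x\to y$ in $\Gamma$, realised by some $A\in\mathcal A$ and index $i_0$ with $\phi_{\xi_{i_0}}(x)=y$ and $f_{i_0}(y)\neq 0$. Then $A\cdot[x]\in N$ is a finite combination of basis vectors in which $[y]$ occurs with the nonzero coefficient $f_{i_0}(y)$, so the isolation lemma gives $[y]\in N$. Hence the set $\{x:[x]\in N\}$ is closed under passing along arrows of $\Gamma$. Since $N\neq 0$ contains some $[x_0]$ and $\Gamma$ is connected (so every vertex is reachable from $x_0$ along directed paths), an induction on path length yields $[y]\in N$ for all $y\in\gimel\cdot v$, that is, $N=M(\{e\},\gimel\cdot v)$. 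The main obstacle is the isolation lemma: everything rests on translating the hypothesis ``$\mathcal B$ separates points'' into the surjectivity of $\mathcal B\to\mathbb C^{S}$, so that individual weight vectors can be extracted from arbitrary finite combinations; the connectivity hypothesis must be read as directed reachability of every vertex from every other, which is precisely what the forward action of $\mathcal A$ on the basis provides.
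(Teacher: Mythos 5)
Your proof is correct and takes essentially the same route as the paper: identify $M(\{e\},\gimel\cdot v)$ with finite linear combinations of points of $\gimel\cdot v$, use the point-separation hypothesis on $H^0(V,\mathcal O)$ to extract individual basis vectors from any nonzero submodule, and then propagate membership along directed paths in $\Gamma$ by induction. Your ``isolation lemma'' via Dedekind's independence of characters simply makes explicit a step the paper asserts in one line, and your observation that the $[y]$-coefficient is exactly $f_{i_0}(y)$ (no cancellation, since distinct $\xi_i$ translate $x$ to distinct points) is a worthwhile detail, but the argument is the same.
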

    
\begin{proof}
First of all, we note that the $\mathcal A$-module 
$M(\{e\}, \gimel \cdot v )$ is a direct sum of 
$\mathbb E_{\xi} =  \phi_{\xi} 
(\mathcal O^{\{e\}}_e /(\mathcal O^{\{e\}}_e \cap \mathcal J_e)) \simeq \mathbb C$.
In other words, $M(\{e\}, \gimel \cdot v )$ is a vector space of all finite linear
combinations of points $v_s\in \gimel \cdot v$.  
 
Let $\sum a_s v_s$ be an element in a submodule $N$. Since $H^0(V, \mathcal O)$ 
separates points of $\gimel \cdot v$, we see that 
$v_s\in N$ for any $s$. Let us take a submodule 
$N'\subset M(\{e\}, \gimel \cdot v )$ that contains a point
$x\in \gimel \cdot v$. Further let $y\in \gimel \cdot v$. 
Since $\Gamma$ is connected as an oriented graph, there exists a sequence 
$$
x_0=x, x_1,\ldots, x_{n-1}, x_n=y
$$ 
such that the path $x_0 \to x_1\to \cdots \to  x_n $ connects $x$ and $y$. 
Assume, by induction, 
that we proved that $x_{s-1}\in N$. From our assumptions, there exists  
$A=\sum f_i\phi_{\xi_i}\in \mathcal A$  and $i_0$ such that 
$\phi_{\xi_{i_0}} (x_{s-1}) =x_s$ and $f_{i_0}(x_s)\ne 0$. 
We have $A(x_{s-1}) = \sum f_i(x_s)\phi_{\xi_i}(x_{s-1})\in N$. 
Therefore $x_s\in N'$. 
\end{proof}

Assume that $\mathcal A$ is generated by  $H^0(\gimel \cdot v, \mathcal O)$ 
and, additionally, by  a finite set of elements $E_i = \sum f_{ij}\phi_{\xi_{ij}}$. 
Let $\gimel$ be the group generated by all $\xi_{ij}$. 
Denote by $Q(\xi_{ij})$  the monoid generated by all $\xi_{ij}$.

\begin{proposition}
Assume that  
\begin{enumerate}[$($i$)$]
\item\label{assum1} $H^0(V, \mathcal O)$ separates points of $\gimel \cdot v$;
\item\label{assum2} $Q (\xi_{ij}) = \gimel$;
\item\label{assum3} every $f_{ij}$ has no zeros at $\gimel \cdot v$.
\end{enumerate}
Then the $\mathcal A$-module $M(\{e\}, \gimel \cdot v )$ is irreducible. 
\end{proposition}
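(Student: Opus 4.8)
The plan is to derive the statement from Proposition~\ref{propn231} by showing that hypotheses \ref{assum2} and \ref{assum3} force the oriented graph $\Gamma$ attached to $\mathcal A$ and $\gimel\cdot v$ to be connected; hypothesis \ref{assum1} then lets us quote Proposition~\ref{propn231} verbatim.

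First I would record the preliminary point that $\mathcal A$ really preserves $\mathfrak O|_{\gimel\cdot v}$, which is needed to speak of the module $M(\{e\},\gimel\cdot v)$ and to invoke Proposition~\ref{propn231}. The algebra $\mathcal A$ is generated by $H^0(\gimel\cdot v,\mathcal O)$, which acts by multiplication and visibly preserves $\mathfrak O|_{\gimel\cdot v}$, together with the elements $E_i=\sum_j f_{ij}\phi_{\xi_{ij}}$. For a germ $F_x\in\mathcal O_x$ with $x\in\gimel\cdot v$, the germ $E_i(F_x)=\sum_j f_{ij}\,\phi_{\xi_{ij}}(F_x)$ is supported at the points $\phi_{\xi_{ij}}(x)\in\gimel\cdot v$, and by \ref{assum3} each $f_{ij}$ is holomorphic (indeed nonvanishing) there; hence $E_i(F_x)\in\mathfrak O|_{\gimel\cdot v}$ and $\mathcal A$ preserves $\mathfrak O|_{\gimel\cdot v}$.

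The key step is the connectivity of $\Gamma$. Since each $\xi_{ij}\in\gimel$ and $x\in\gimel\cdot v$, the target $\phi_{\xi_{ij}}(x)$ lies again in $\gimel\cdot v$, and \ref{assum3} gives $f_{ij}(\phi_{\xi_{ij}}(x))\neq 0$; therefore the arrow $x\to\phi_{\xi_{ij}}(x)$ belongs to $\Gamma$ for every $x\in\gimel\cdot v$ and every index $(i,j)$. Now take vertices $x,y\in\gimel\cdot v$ and write $y=\phi_{\gamma}(x)$ with $\gamma\in\gimel$. By \ref{assum2} the element $\gamma$ lies in the monoid $Q(\xi_{ij})$, so it can be written as a word $\gamma=\eta_m\circ\cdots\circ\eta_1$ with each $\eta_k\in\{\xi_{ij}\}$ and, crucially, with no inverses. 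Setting $x_0:=x$ and $x_k:=\phi_{\eta_k}(x_{k-1})$, each step $x_{k-1}\to x_k$ is an arrow of $\Gamma$ by the preceding observation, and $x_m=\phi_{\eta_m\circ\cdots\circ\eta_1}(x)=\phi_{\gamma}(x)=y$. Thus there is a directed path from $x$ to $y$, so $\Gamma$ is connected as an oriented graph.

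With $\Gamma$ connected and $H^0(V,\mathcal O)$ separating points of $\gimel\cdot v$ by \ref{assum1}, Proposition~\ref{propn231} applies and yields that $M(\{e\},\gimel\cdot v)$ is irreducible. I expect the only genuinely delicate point to be the use of hypothesis \ref{assum2} in its monoid form rather than as plain group generation: the arrows of $\Gamma$ are oriented and correspond to multiplying by a single generator $\xi_{ij}$, never by its inverse, so the connecting element $\gamma$ must be expressed as a word with no inverse letters, which is exactly what $Q(\xi_{ij})=\gimel$ guarantees. Everything else is a routine reduction to Proposition~\ref{propn231}.
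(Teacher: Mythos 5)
Your proposal is correct and follows essentially the same route as the paper: reduce to Proposition~\ref{propn231} using hypothesis \eqref{assum1} and \eqref{assum3}, with connectedness of $\Gamma$ deduced from the monoid condition \eqref{assum2}. You merely make explicit two points the paper leaves implicit --- that $\mathcal A$ preserves $\mathfrak O|_{\gimel\cdot v}$ (which is in fact part of the standing assumptions of that subsection) and the directed-path construction from a factorization $\gamma=\eta_m\circ\cdots\circ\eta_1$ in $Q(\xi_{ij})$ --- and both details are correctly handled.
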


\begin{proof}
Due to assumptions \eqref{assum1} and \eqref{assum3}, to be able to use
Proposition~\ref{propn231}, we only need to show that
$\Gamma$ is connected. The latter, however, follows directly from 
assumption~\eqref{assum2}.  Therefore the claim follows from
Proposition~\ref{propn231}
\end{proof}

\subsection{Singular modules}\label{s8.3}

Assume that $V$ is a complex-analytic Lie group, $\gimel\subset V$ 
is a subgroup and $v\in V$.  Let $\mathcal A\subset S(V)^{G_{\gimel\cdot v}}$ be a 
finitely generated over $H^0(V, \mathcal O^{G_{\gimel\cdot v}})$ subalgebra which 
preserves the vector space 
$\mathfrak O^{G_{\gimel\cdot v}}|_{\gimel \cdot v}$. 
Assume that $H^0(V, \mathcal O^{G_{\gimel\cdot v}})$ separates $G_{\gimel\cdot v}$-orbits 
in $\gimel\cdot v$ and that the $H^0(V, \mathcal O^{G_{\gimel\cdot v}})$-module 
$\mathbb E^{G_{\gimel\cdot v}}_{\bar\xi}$, see \eqref{eq vector space sum E_xi_i}, 
is generated by a non-trivial constant $c\in \mathbb C\setminus\{ 0\}$,
for any $\xi\in \gimel\cdot v$.

We denote by $\Gamma$ the oriented graph defined as follows:
\begin{itemize}
\item the vertices of $\Gamma$ are all $G_{\gimel\cdot v}$-orbits 
in $\gimel \cdot v$;
\item for two orbits  $\bar \xi$ to $\bar\eta$, there is
an oriented  arrow from $\bar \xi$ to $\bar\eta$, 
if there exists $A=\sum f_i\phi_{\xi_i}\in \mathcal A$ such that the 
function $H:=\sum\limits_{(g,h,i)\in\Lambda} h\cdot f_i$, cf. \eqref{eq A(F)_eta} for $X=1$, exists and is not equal to $0$ at $\eta$. (Note that the function $H$ depends on the orbits $\bar \xi$ and $\bar \eta$ and on the element $A$.)
\end{itemize}

\begin{theorem}\label{thmnn54} 
In the above situation, we have:

\begin{enumerate}[$($i$)$]
\item\label{thmnn54.1} For every $\bar\xi$, the module $M(G_{\gimel\cdot v}, \gimel \cdot v )$
has a unique submodule $N(\bar\xi)$ which is maximal, with respect to inclusions,
among all submodules of $M(G_{\gimel\cdot v}, \gimel \cdot v )$ that do not contain
$\mathbb E^{G_{\gimel\cdot v}}_{\bar\xi}$.
\item\label{thmnn54.2} 
If $\Gamma$ is connected as an oriented graph, then
$M(G_{\gimel\cdot v}, \gimel \cdot v )$ is generated by the class of a non-trivial
constant function and also has a unique maximal submodule.
\end{enumerate}
\end{theorem}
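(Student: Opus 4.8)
The plan is to analyze $M := M(G_{\gimel\cdot v},\gimel\cdot v) = \bigoplus_{\bar\xi}\mathbb{E}^{G_{\gimel\cdot v}}_{\bar\xi}$ (cf. \eqref{eq vector space sum E_xi_i}) first as a module over the commutative subalgebra $\mathcal{B}=H^0(V,\mathcal{O}^{G_{\gimel\cdot v}})\subset\mathcal{A}$, and only afterwards to bring in the arrows of $\Gamma$. The central step is a decomposition lemma: every $\mathcal{B}$-submodule $N\subseteq M$ splits as $N=\bigoplus_{\bar\xi}(N\cap\mathbb{E}^{G_{\gimel\cdot v}}_{\bar\xi})$. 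To prove it I would use that, by Theorem~\ref{cor dim of algebraic anf holom coinsiede}, each fiber is finite dimensional, so the maximal ideal $\mathfrak{m}_\xi\subset\mathcal{O}_\xi$ acts nilpotently on it; consequently the ideal $\mathfrak{n}_{\bar\xi}\subset\mathcal{B}$ of global invariants vanishing at $\bar\xi$ acts nilpotently, and each fiber is a $\mathcal{B}$-module supported at the single maximal ideal $\mathfrak{n}_{\bar\xi}$. Since $\mathcal{B}$ separates $G_{\gimel\cdot v}$-orbits, distinct $\mathfrak{n}_{\bar\xi}$ are pairwise comaximal, so for any $m\in N$, whose support meets only finitely many fibers $\bar\xi_1,\dots,\bar\xi_r$, the Chinese Remainder Theorem applied to $\mathcal{B}/\bigcap_j\mathfrak{n}_{\bar\xi_j}^{N_j}$ produces idempotents projecting $m$ onto its fiber components; applying them shows each component already lies in $N$.

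Alongside this I would record that, by the hypothesis that $\mathbb{E}^{G_{\gimel\cdot v}}_{\bar\xi}$ is generated over $\mathcal{B}$ by the constant class $\tilde 1_{\bar\xi}$, each fiber is a cyclic module $\mathcal{B}/\mathrm{Ann}(\tilde 1_{\bar\xi})$ over a local Artinian ring, hence a local $\mathcal{B}$-module whose unique maximal submodule is $\mathfrak{n}_{\bar\xi}\cdot\tilde 1_{\bar\xi}\subsetneq\mathbb{E}^{G_{\gimel\cdot v}}_{\bar\xi}$, properness following from Nakayama's lemma. Part \ref{thmnn54.1} then follows formally, in the spirit of the construction of canonical simple modules in Subsection~\ref{sec canonical simple}: an $\mathcal{A}$-submodule $N$ (in particular a $\mathcal{B}$-submodule) fails to contain $\mathbb{E}^{G_{\gimel\cdot v}}_{\bar\xi}$ if and only if $N\cap\mathbb{E}^{G_{\gimel\cdot v}}_{\bar\xi}\subseteq\mathfrak{n}_{\bar\xi}\tilde 1_{\bar\xi}$. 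Since the decomposition lemma gives $\big(\sum_\alpha N_\alpha\big)\cap\mathbb{E}^{G_{\gimel\cdot v}}_{\bar\xi}=\sum_\alpha\big(N_\alpha\cap\mathbb{E}^{G_{\gimel\cdot v}}_{\bar\xi}\big)$, the sum $N(\bar\xi)$ of all submodules not containing the fiber again meets the fiber inside $\mathfrak{n}_{\bar\xi}\tilde 1_{\bar\xi}$, so it is the required unique submodule maximal with this property.

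For part \ref{thmnn54.2} I would fix a root vertex $\bar\xi_0$ from which every vertex is reachable by a directed path (connectedness of $\Gamma$, as used in Proposition~\ref{propn231}) and prove $\mathcal{A}\cdot\tilde 1_{\bar\xi_0}=M$ by induction along such paths. The base fiber is captured since $\mathcal{B}\tilde 1_{\bar\xi_0}=\mathbb{E}^{G_{\gimel\cdot v}}_{\bar\xi_0}$. Given an arrow $\bar\xi\to\bar\eta$, the definition of $\Gamma$ provides $A=\sum_i f_i\phi_{\xi_i}\in\mathcal{A}$ with $H=\sum_{(g,h,i)\in\Lambda}h\cdot f_i$ nonzero at $\eta$; specializing \eqref{eq A(F)_eta} to $X=1$ and $F'=1$ identifies the $\bar\eta$-component of $A(\tilde 1_{\bar\xi})$ with the class of the germ $H$, whose value $H(\eta)\ne 0$ places it outside $\mathfrak{n}_{\bar\eta}\tilde 1_{\bar\eta}$. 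By the decomposition lemma this component already lies in $\mathcal{A}\cdot\tilde 1_{\bar\xi_0}$, and being a generator of the local cyclic fiber it yields $\mathbb{E}^{G_{\gimel\cdot v}}_{\bar\eta}\subseteq\mathcal{A}\cdot\tilde 1_{\bar\xi_0}$. Iterating along directed paths to all vertices gives $M=\mathcal{A}\cdot\tilde 1_{\bar\xi_0}$. Finally, cyclicity forces every proper submodule to miss $\mathbb{E}^{G_{\gimel\cdot v}}_{\bar\xi_0}$, since otherwise it would contain $\tilde 1_{\bar\xi_0}$ and hence all of $M$; thus $N(\bar\xi_0)$ from part \ref{thmnn54.1} is the unique maximal submodule.

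The main obstacle is the decomposition lemma, specifically justifying the nilpotent action of $\mathfrak{n}_{\bar\xi}$ on each fiber and assembling the Chinese Remainder idempotents so as to conclude that arbitrary submodules respect the fiber grading; once that is in place both parts become essentially bookkeeping, the only other delicate point being the identification in \eqref{eq A(F)_eta} of the $\bar\eta$-component of $A(\tilde 1_{\bar\xi})$ with the graph datum $H$.
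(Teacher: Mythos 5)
Your proposal is correct and takes essentially the same route as the paper: part \eqref{thmnn54.2} is precisely the paper's induction along directed paths of $\Gamma$, transporting the constant class between fibers via the nonvanishing of $H$ from \eqref{eq A(F)_eta}, and part \eqref{thmnn54.1} is the sum-of-all-submodules-avoiding-the-fiber construction already used for the canonical simple modules in Subsection~\ref{sec canonical simple}. Your decomposition lemma (nilpotent action of $\mathfrak n_{\bar\xi}$ on the finite-dimensional fibers together with Chinese Remainder idempotents, so that every submodule splits along the fiber grading) simply makes explicit a step that the paper's terse proof leaves implicit, and is a welcome addition rather than a different approach.
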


\begin{proof} 
Claim~\eqref{thmnn54.1} follows from Proposition \ref{propn231}.
Further we have
$$
M(G_{\gimel\cdot v}, \gimel \cdot v )= 
\bigoplus_{\bar\xi \in \gimel \cdot v/G_{\gimel\cdot v}} \mathbb E^{G_{\gimel\cdot v}}_{\bar\xi},
$$ 
see (\ref{eq vector space sum E_xi_i}). Denote by $N$ the $\mathcal A$-submodule 
generated by the class $c_{\bar \xi}\in \mathbb E^{G_{\gimel\cdot v}}_{\bar\xi}$ of a non-trivial constant function $c$.
Let $\bar y\subset \gimel \cdot v$. 
Since $\Gamma$ is connected as an oriented graph, there exists a sequence 
$$
\bar x_0=\bar x,\bar x_1,\ldots,\bar x_{n-1},\bar x_n=\bar y
$$ 
such that the path $\bar x_0 \to \bar x_1\to \cdots \to \bar x_n $ connects $\bar x$ 
and $\bar y$. Assume, by induction, that we proved that $1_{\bar x_{s-1}}\in N$. 
From our assumptions, there is  $A=\sum f_i\phi_{\xi_i}\in \mathcal A$  
that sends $1_{\bar x_{s-1}}$ to $a_{\bar x_{s-1}}$ with a constant non-trivial 
representative $a\ne 0$, see (\ref{eq A(F)_eta}). This implies the first
part of claim~\eqref{thmnn54.2} and the second part of claim~\eqref{thmnn54.2}
follows from the first part of claim~\eqref{thmnn54.2} and claim~\eqref{thmnn54.1}.
\end{proof}

Let $M(\bar\xi)$ denote the $\mathcal A$-submodule of 
$M(G_{\gimel\cdot v}, \gimel \cdot v )$ generated by $\mathbb E^{G_{\gimel\cdot v}}_{\bar\xi}$.
The simple quotient $M(\bar\xi)/N(\bar\xi)$, whose existence is 
guaranteed by Theorem~\ref{thmnn54}\eqref{thmnn54.1}, is 
the {\em canonical } Harish-Chandra  $\mathcal A$-module  associated to $\bar\xi$.

\noindent
V.~M.: Department of Mathematics, Uppsala University, Box. 480,
SE-75106, Uppsala, SWEDEN, email: {\tt mazor\symbol{64}math.uu.se}

\noindent
E.~V.: Departamento de Matem{\'a}tica, Instituto de Ci{\^e}ncias Exatas,
Universidade Federal de Minas Gerais,
Av. Ant{\^o}nio Carlos, 6627, CEP: 31270-901, Belo Horizonte,
Minas Gerais, BRAZIL, and Laboratory of Theoretical and Mathematical Physics, Tomsk State University, 
Tomsk 634050, RUSSIA, 

\noindent email: {\tt VishnyakovaE\symbol{64}googlemail.com}

\end{document}